\title[A connectedness theorem]{A connectedness theorem over the spectrum of
\linebreak
a formal power series ring}
\author{Masayuki Kawakita}
\address{Research Institute for Mathematical Sciences, Kyoto University, Kyoto 606-8502, Japan}
\email{masayuki@kurims.kyoto-u.ac.jp}
\theoremstyle{plain}
\newtheorem{theorem}{Theorem}[section]
\newtheorem{proposition}[theorem]{Proposition}
\newtheorem{lemma}[theorem]{Lemma}
\newtheorem{corollary}[theorem]{Corollary}
\newtheorem{conjecture}[theorem]{Conjecture}
\newtheorem*{conjecture'acc}{Conjecture \ref{cnj:acc}$\mathbf{'}$}
\theoremstyle{definition}
\newtheorem{definition}[theorem]{Definition}
\theoremstyle{remark}
\newtheorem{remark}[theorem]{Remark}
\newtheorem*{acknowledgements}{Acknowledgements}
\newcommand{\bA}{\mathbb{A}}
\newcommand{\bN}{\mathbb{N}}
\newcommand{\bQ}{\mathbb{Q}}
\newcommand{\bR}{\mathbb{R}}
\newcommand{\bZ}{\mathbb{Z}}
\newcommand{\cD}{\mathcal{D}}
\newcommand{\cF}{\mathcal{F}}
\newcommand{\cI}{\mathcal{I}}
\newcommand{\cO}{\mathcal{O}}
\newcommand{\cQ}{\mathcal{Q}}
\newcommand{\fa}{\mathfrak{a}}
\newcommand{\fb}{\mathfrak{b}}
\newcommand{\fd}{\mathfrak{d}}
\newcommand{\fm}{\mathfrak{m}}
\newcommand{\fn}{\mathfrak{n}}
\newcommand{\fp}{\mathfrak{p}}
\newcommand{\fq}{\mathfrak{q}}
\newcommand{\sfa}{\mathsf{a}}
\newcommand{\sfb}{\mathsf{b}}
\newcommand{\sfc}{\mathsf{c}}
\newcommand{\sfd}{\mathsf{d}}
\newcommand{\sfq}{\mathsf{q}}
\newcommand{\sfI}{\mathsf{I}}
\newcommand{\id}{\mathrm{id}}
\newcommand{\pr}{\mathrm{pr}}
\newcommand{\ru}[1]{\lceil{#1}\rceil}
\newcommand{\agl}[1]{\langle{#1}\rangle}
\DeclareMathOperator{\codim}{codim}
\DeclareMathOperator{\Coef}{Coef}
\DeclareMathOperator{\Cosupp}{Cosupp}
\DeclareMathOperator{\Exc}{Exc}
\DeclareMathOperator{\mld}{mld}
\DeclareMathOperator{\mult}{mult}
\DeclareMathOperator{\Nklt}{Nklt}
\DeclareMathOperator{\ord}{ord}
\DeclareMathOperator{\Spec}{Spec}
\DeclareMathOperator{\Supp}{Supp}
\begin{document}
\begin{abstract}
We study the connectedness of the non-subklt locus over the spectrum of a formal power series ring. In dimension $3$, we prove the existence and normality of the smallest lc centre, and apply it to the ACC for minimal log discrepancies greater than $1$ on non-singular $3$-folds.
\end{abstract}

\maketitle

\section{Introduction}
The vanishing theorem by Kodaira \cite{Kd53} is one of the most basic tools in algebraic geometry in characteristic zero. It is reasonable to expect a vanishing theorem on excellent schemes, but it is annoyingly unknown besides the work on surfaces by Lipman \cite{Lp78}. Precisely, we are interested in the relative Kodaira vanishing for a birational morphism over the spectrum of a formal power series ring $R=K[[x_1,\ldots,x_d]]$ for a field $K$ of characteristic zero. We mean by an $R$-variety an integral separated scheme of finite type over $\Spec R$.

\begin{conjecture}
Let $f\colon Y\to X$ be a projective birational morphism of non-singular $R$-varieties and $L$ an $f$-ample divisor on $Y$. Then $R^if_*\cO_Y(K_{Y/X}+L)=0$ for $i\ge1$. Here the relative canonical divisor $K_{Y/X}$ is defined by the $0$-th Fitting ideal of $\Omega_{Y/X}$.
\end{conjecture}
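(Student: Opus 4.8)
The plan is to reduce the conjecture to the corresponding statement over a field by Néron--Popescu desingularization, and then to invoke relative Grauert--Riemenschneider vanishing on a finite-type model over $K$. First I would note that $K\to R$ is a regular homomorphism: it is flat, and its only fibre is geometrically regular, since for every finite extension $L/K$ the ring $L\otimes_K K[[x_1,\ldots,x_d]]$ is finite étale over $K[[x_1,\ldots,x_d]]$ (characteristic zero) and hence regular. By Néron--Popescu desingularization, $R$ is a filtered colimit $\varinjlim_\lambda A_\lambda$ of smooth finite-type $K$-algebras. Using the standard limit arguments of EGA IV, \S8, one descends the whole configuration: for $\lambda$ large there are a projective birational morphism $f_\lambda\colon Y_\lambda\to X_\lambda$ of integral separated finite-type $A_\lambda$-schemes and an $f_\lambda$-ample divisor $L_\lambda$ whose base change along $A_\lambda\to R$ recovers $(f\colon Y\to X,L)$; since the $0$-th Fitting ideal commutes with base change, $K_{Y/X}$ is the pullback of $K_{Y_\lambda/X_\lambda}$, so $\cO_Y(K_{Y/X}+L)$ is the pullback of $\cO_{Y_\lambda}(K_{Y_\lambda/X_\lambda}+L_\lambda)$.

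Granting that $Y_\lambda$ and $X_\lambda$ may be taken regular (see below), they are smooth over $K$, and for the birational morphism $f_\lambda$ the Fitting-ideal definition agrees with $K_{Y_\lambda/X_\lambda}=K_{Y_\lambda}-f_\lambda^*K_{X_\lambda}$ (the image of $\bigwedge^{\mathrm{top}}df_\lambda$). By the projection formula,
\[
R^i(f_\lambda)_*\cO_{Y_\lambda}(K_{Y_\lambda/X_\lambda}+L_\lambda)\cong\bigl(R^i(f_\lambda)_*(\omega_{Y_\lambda}\otimes\cO_{Y_\lambda}(L_\lambda))\bigr)\otimes\omega_{X_\lambda}^{-1},
\]
and the right-hand side vanishes for $i\ge1$ by relative Grauert--Riemenschneider vanishing, applicable because $f_\lambda$ is proper, $Y_\lambda$ is smooth over a field of characteristic zero, and $L_\lambda$ is $f_\lambda$-ample. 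Flat base change along $A_\lambda\to R$ then transports this to $R^if_*\cO_Y(K_{Y/X}+L)=0$ for $i\ge1$, as desired.

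The hard part is precisely the bracketed hypothesis: arranging that $Y_\lambda$ and $X_\lambda$ can be chosen regular. Non-singularity of an $R$-variety is \emph{not} the relative smoothness of the structure morphism $Y\to\Spec R$ — for instance $\Spec K$ is a non-singular $R$-variety which is not even flat over $\Spec R$ — so the usual ``smoothness spreads out'' principle does not apply, and one must instead descend the absolute regularity of $Y$ and of $X$. This is delicate because in a Néron--Popescu system the transition maps $A_\lambda\to A_\mu$ need not be flat or smooth, so the non-regular loci of the $Y_\mu$ do not propagate by pullback, and there is no formal reason why an a priori nonzero higher direct image on a singular model $Y_\lambda$ should become zero after $\otimes_{A_\lambda}R$. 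Resolving this — either by producing a Popescu presentation of $R$ with flat (or smooth) transition maps, so that the regular loci of the $Y_\mu$ are compatible and the emptiness of the non-regular locus of $Y$ forces emptiness already at a finite stage, or by directly approximating the regular scheme $Y$ by a regular finite-type $K$-scheme while keeping track of $f$ and $L$ — is the essential obstacle, and is why the statement is only conjectural. A route bypassing approximation would be to run a relative Kawamata--Viehweg covering argument directly over $\Spec R$ (resolution of singularities is available for excellent schemes of characteristic zero), but making the Hodge-theoretic degeneration input work over a non-field base does not seem any easier.
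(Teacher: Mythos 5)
This statement is the Conjecture stated in the introduction: the paper explicitly declines to prove it (``We shall not deal with this algebraic conjecture''), so there is no proof in the paper to compare against, and any complete argument you gave would be new. Your proposal is not such an argument, and to your credit you say so: the reduction via N\'eron--Popescu to a finite-type model over $K$, followed by relative Kawamata--Viehweg/Grauert--Riemenschneider vanishing, is the natural strategy, but the step you bracket --- arranging that $Y_\lambda$ and $X_\lambda$ are regular --- is exactly where the argument breaks. Regularity of a non-singular $R$-variety is an absolute condition, not smoothness over $\Spec R$, and since the maps $A_\lambda\to A_\mu$ and $A_\lambda\to R$ in a Popescu presentation are merely $K$-algebra maps (neither flat nor smooth in general), neither flat descent nor openness of the smooth locus lets you push the regularity of $Y$ down to a finite stage. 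So the proposal correctly locates the obstruction but does not overcome it.

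There is a second gap you do not flag: the concluding step ``flat base change along $A_\lambda\to R$'' is unjustified for the same reason. The structure maps $A_\lambda\to R$ of a Popescu presentation are typically not flat (already $K[t_1,t_2]\to K[[x]]$, $t_1\mapsto x$, $t_2\mapsto e^x$, fails to be flat: $x$, $e^x-1$ is not a regular sequence on $K[[x]]$), and without flatness or Tor-independence the base-change map does not transport the vanishing of $R^i(f_\lambda)_*$ to the vanishing of $R^if_*$; non-flat derived base change introduces Tor corrections. So even if you could produce regular models $Y_\lambda$, $X_\lambda$, the final descent of the vanishing statement to $\Spec R$ would still require either a presentation with flat structure maps or a separate comparison of cohomology along $X\to X_\lambda$. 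In short: the approach is reasonable as a programme, but both the regularity descent and the final base change are genuine unresolved gaps, consistent with the statement remaining a conjecture in the paper.
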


We shall not deal with this algebraic conjecture. Instead, we study the \textit{connectedness lemma} by Shokurov \cite{Sh92} and Koll\'ar \cite{Kl92}, which is an important geometric application of the vanishing theorem in birational geometry. It claims for a proper morphism $f\colon Y\to X$, the fibrewise connectedness of the non-subklt locus of a subpair $(Y,\Delta)$ such that $\Delta$ is effective outside a locus in $X$ of codimension at least $2$ and such that $-(K_Y+\Delta)$ is $f$-nef and $f$-big. We shall verify it for a germ at a non-singular point of $X$ in the case when $f$ is isomorphic outside the central fibre (Theorem \ref{thm:connect}). Investigating further in dimension $3$, we obtain a desirable result on the smallest lc centre of a pair on a non-singular $R$-variety of dimension $3$.

\begin{theorem}\label{thm:centre}
Let $P\in(X,\fa)$ be a germ of an lc but not klt pair of a non-singular $R$-variety $X$ of dimension $3$ and an $\bR$-ideal $\fa$ on $X$. Then the smallest lc centre of $(X,\fa)$ exists and it is normal.
\end{theorem}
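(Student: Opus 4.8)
The plan is to separate the statement into the existence of the smallest lc centre and its normality, and in each to follow the argument used over a field by Ambro, Fujino and Kawamata, substituting the connectedness theorem (Theorem~\ref{thm:connect}) for the relative Kodaira vanishing on which those arguments rest.

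For existence, observe that in the germ at $P$ every lc centre of $(X,\fa)$ contains $P$, so any two of them meet; it therefore suffices to prove that the intersection of two lc centres again contains an lc centre. Granting this, Noetherian induction produces a minimal lc centre $W$ through $P$, and for any other lc centre $W'$ the intersection $W\cap W'\subseteq W$ contains an lc centre, which by minimality equals $W$, so $W\subseteq W'$ and $W$ is the smallest lc centre. To prove the intersection statement I would first, by a tie-breaking perturbation of $\fa$, reduce to the case that the two centres $W_1,W_2$ are both minimal, since replacing $W_i$ by a smaller lc centre only shrinks $W_1\cap W_2$. Take a log resolution $f\colon Y\to X$ of $(X,\fa)$ and put $\Delta_Y=\sum(1-a(E;X,\fa))E$, so that $K_Y+\Delta_Y=f^*K_X+G$ with $\fa\cO_Y=\cO_Y(-G)$; then $-(K_Y+\Delta_Y)$ is $f$-nef (because $\cO_Y(-G)$ is relatively generated by global sections) and $f$-big (because $f$ is birational), and $\Delta_Y$ is effective outside a codimension $\geq2$ subset of $X$. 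After the reductions needed to bring Theorem~\ref{thm:connect} to bear, it gives that $\Nklt(Y,\Delta_Y)$ is connected along the fibre over $P$. The lc places $F_1,F_2$ of $W_1,W_2$ are components of $\Nklt(Y,\Delta_Y)$ meeting that fibre, and one further tie-breaking — lowering to $0$ the log discrepancy of a divisor over $W_1\cap W_2$ while keeping $(X,\fa)$ lc and $F_1,F_2$ as lc places — combined with this connectedness and the combinatorics of the strata of $\Nklt(Y,\Delta_Y)$ forces a stratum lying over $W_1\cap W_2$, whose image is the required lc centre.

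For normality, since the germ is local at $P$ it suffices to show $W$ is normal at $P$, and the cases $\dim W\in\{0,1,2\}$ are treated separately. If $\dim W=0$ there is nothing to prove. If $\dim W=2$ then $W$ is a divisor on $X$; since $X$ is non-singular of dimension $3$, the only divisorial valuation with centre $W$ is $\ord_W$, so $\ord_W(\fa)=1$ and $W$ is a Cartier divisor on $X$, in particular Gorenstein. If $W$ were non-normal it would fail $R_1$, i.e.\ be singular along a curve $\Gamma$ with $\mult_\Gamma W\geq2$; blowing up $X$ along $\Gamma$ then produces a divisor $E$ with centre $\Gamma$ and $a(E;X,\fa)\leq\codim_X\Gamma-\mult_\Gamma W\cdot\ord_W(\fa)=2-2=0$, so $\Gamma$ is an lc centre strictly inside $W$, contradicting minimality of $W$. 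Finally, if $\dim W=1$ and $W$ is not normal at $P$, I would invoke the adjunction formula for the lc centre $W$: there is an effective $\bR$-divisor $B$ on $W^\nu$ whose support contains the conductor of $W^\nu\to W$, together with a nef moduli part $M$, such that $(W^\nu,B)$ is sub-lc and the images of the lc centres of $(W^\nu,B+M)$ are lc centres of $(X,\fa)$; over a field this last property is proved by Kodaira vanishing, and over $\Spec R$ I would derive it from Theorem~\ref{thm:connect} applied to a log resolution of $X$ restricted over $W$, the moduli part causing no difficulty since it lives on the curve $W^\nu$. The conductor then meets the fibre of $W^\nu\to W$ over $P$, so $(W^\nu,B)$ has an lc centre there, whose image is an lc centre of $(X,\fa)$ contained in $\{P\}\subsetneq W$ — again contradicting minimality. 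Hence $W$ is normal.

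The crux, and the source of the dimension restriction, is the $1$-dimensional case of normality. Over a field one identifies lc centres of the adjunction pair on $W^\nu$ with images of lc centres of $(X,\fa)$ by a relative Kodaira vanishing — $R^1f_*$ of a suitable line bundle vanishes, yielding a surjection $\cO_X\twoheadrightarrow(f|_F)_*\cO_F$ and hence control of $\cO_W$ — and this vanishing is precisely what is unavailable over $\Spec R$. Theorem~\ref{thm:connect} is a much weaker substitute, controlling only $H^0$ of the structure sheaf of the non-klt locus along fibres, so the argument can be pushed through only where this is enough: trivially in dimension $0$, by the multiplicity computation in the divisorial case, and — with care — in the $1$-dimensional case. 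This is why the result is confined to dimension $3$.
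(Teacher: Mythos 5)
The main gap is in the curve case of normality, which is exactly where the paper's hard work lies. You propose to invoke subadjunction on $W^\nu$ with the conductor contained in $\Supp B$, and to deduce the key property (that lc centres of $(W^\nu,B+M)$ map to lc centres of $(X,\fa)$, so that a non-empty conductor yields an lc centre at $P$) ``from Theorem~\ref{thm:connect}''. This cannot work: Theorem~\ref{thm:connect} is a purely set-theoretic connectedness statement about $\Nklt_Y\cap f^{-1}(P)$, whereas the subadjunction statement you need (over a field it is \cite[Theorem 9.1]{F11}, proved via vanishing theorems for embedded normal crossing pairs) requires a scheme-theoretic surjection of the form $\cO_X\twoheadrightarrow f_*\cO_F$, i.e.\ the vanishing of $R^1f_*$ of an ideal sheaf --- precisely the relative Kodaira-type vanishing that is open over $\Spec R$ (Conjecture 1.1). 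The paper does not use adjunction on $W^\nu$ at all. Instead it proves genuine vanishing statements for a restricted class of sheaves: it approximates $\fa$ by the $\fm$-primary ideals $\fa\agl{l}$, descends the resulting data to $\bA_k^d$ where Kawamata--Viehweg applies and base-changes back (Lemma \ref{lem:vanishing}), enlarges the class of admissible sheaves by a skyscraper argument (Lemma \ref{lem:refine}), and, via the tower of blow-ups (\ref{eqn:tower}), obtains $R^1f_*(q_*(\cO_W(-aE_D)+\cO_W(-F_D)))=0$ for the ideals $\fn_a$ approximating $\cI_{C_Y}$ (Lemmas \ref{lem:approx_van}, \ref{lem:topology}); the isomorphism $\cO_C\simeq f_{C*}\cO_{C_Y}$ then follows. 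None of this is contained in, or replaceable by, ``Theorem~\ref{thm:connect} applied to a log resolution restricted over $W$''; your own closing paragraph concedes the point by reducing the crux to the phrase ``with care''.

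There is also a problem in your existence argument: Theorem~\ref{thm:connect} is only established for morphisms $f\colon Y\to X$ that are isomorphic outside $P$, so you cannot apply it to a log resolution of $(X,\fa)$, whose exceptional locus lies over all of $\Cosupp\fa$; the chain-of-strata argument you sketch therefore has no model on which to run, quite apart from the vagueness of the final combinatorial step. The paper takes a different and shorter route: after Proposition~\ref{prp:perturb} every lc centre is minimal, Proposition~\ref{prp:centre} shows the union $Z$ of the lc centres is non-singular outside $P$, and an embedded resolution of $Z$ (which \emph{is} isomorphic outside $P$) exhibits $\Nklt_Y(X,\fa)\cap f^{-1}(P)$ as a disjoint union indexed by the lc centres, so Corollary~\ref{cor:connect} forces there to be exactly one; no intersection lemma for lc centres is needed. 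Your codimension-one normality argument, by contrast, is essentially Proposition~\ref{prp:centre} and is fine.
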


It is reduced to the case $X=\Spec R$ with $K$ an algebraically closed field $k$. Theorem \ref{thm:connect}, the fibrewise connectedness, is proved by approximating the effective $\bR$-divisor $f_*\Delta$ by an $\fm$-primary $\bR$-ideal $\fa\agl{l}$, where $\fm$ is the maximal ideal sheaf, such that the non-subklt locus of the subtriplet coming from $\fa\agl{l}$ coincides with the central fibre of the original non-subklt locus. The $\fa\agl{l}$ is descended to $\bA_k^d=\Spec k[x_1,\ldots,x_d]$, on which the connectedness lemma is applied. The existence of the smallest lc centre in Theorem \ref{thm:centre} is a corollary to Theorem \ref{thm:connect}. The hardest part of Theorem \ref{thm:centre} is the normality of the smallest lc centre $C$ which is a curve. We construct an ideal sheaf $\fn_a$ on the normalisation $C_Y$ of $C$ with $f_C\colon C_Y\to C$ which satisfies $f_{C*}\fn_a\subset\cO_C$ and $\cO_C/f_{C*}\fn_a\simeq f_{C*}\cO_{C_Y}/f_{C*}\fn_a$. Then we obtain the isomorphism $\cO_C\simeq f_{C*}\cO_{C_Y}$ meaning the normality of $C$.

Our motivation for excellent schemes stems from the notion of a generic limit of ideals due to de Fernex and Musta\c{t}\u{a} \cite{dFM09}. The generic limit was used to prove the ascending chain condition (ACC) for log canonical thresholds on non-singular varieties \cite{dFEM11}, the approach of which works even for the study of minimal log discrepancies \cite{K14}. We shall apply Theorem \ref{thm:centre} to the ACC conjecture for minimal log discrepancies by Shokurov \cite{Sh88}, \cite{Sh96} and McKernan \cite{M-MSRI} in the case of non-singular $3$-folds, and settle the part of minimal log discrepancies greater than $1$.

\begin{theorem}\label{thm:acc}
Fix subsets $I\subset(0,\infty)$ and $J\subset(1,3]$ both of which satisfy the descending chain condition. Then there exist finite subsets $I_0\subset I$ and $J_0\subset J$ such that if $P\in(X,\fa=\prod_j\fa_j^{r_j})$ is a germ of a pair of a non-singular variety $X$ of dimension $3$ and an $\bR$-ideal $\fa$ on $X$ with all $\fa_j$ non-trivial at $P$, all $r_j\in I$ and $\mld_P(X,\fa)\in J$, then all $r_j\in I_0$ and $\mld_P(X,\fa)\in J_0$.
\end{theorem}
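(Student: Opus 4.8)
The plan is to argue by contradiction through generic limits, reducing the failure of the assertion to a single pair over the spectrum of a formal power series ring, and then to exploit the smallest lc centre furnished by Theorem~\ref{thm:centre}. Suppose the assertion fails for the given $I$ and $J$. Then for every finite $I_0\subseteq I$ and $J_0\subseteq J$ there is a germ violating the conclusion; choosing successively larger $I_0$ and $J_0$ we obtain an infinite sequence of germs $P_i\in(X_i,\fa_i=\prod_j\fa_{ij}^{r_{ij}})$ with $X_i$ a non-singular $3$-fold, every $\fa_{ij}$ non-trivial at $P_i$, every $r_{ij}\in I$ and $m_i:=\mld_{P_i}(X_i,\fa_i)\in J$, for which $\{r_{ij}\}_{i,j}\cup\{m_i\}_i$ meets no finite set. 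Passing to $\Spec\hat{\cO}_{X_i,P_i}$ leaves the $r_{ij}$ and $m_i$ unchanged, so each ambient scheme may be taken to be the spectrum of a complete regular local ring of dimension $3$. The blow-up $E_0$ of the closed point, which has log discrepancy $3$ over $X_i$, gives $m_i\le a_{E_0}(X_i,\fa_i)=3-\sum_j r_{ij}\mult_{P_i}\fa_{ij}\le 3-\sum_j r_{ij}$, so $\sum_j r_{ij}<2$; since $\varepsilon:=\min I>0$, the number of factors is below $2/\varepsilon$. Passing to a subsequence we may therefore assume every $\fa_i$ has the same number $N$ of factors, and, using the DCC, that each $(r_{ij})_i$ is non-decreasing with limit $r_j$ and that $(m_i)_i$ is non-decreasing with limit $m\in(1,3]$; by the choice of the sequence, after deleting finitely many indices, either some $(r_{ij})_i$ is strictly increasing or $(m_i)_i$ is strictly increasing.

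Next I would pass to a generic limit in the sense of de Fernex--Musta\c{t}\u{a} \cite{dFM09}: after a further subsequence there are ideals $\fa_{j,0}$ on $\Spec R$, where $R=k[[x_1,x_2,x_3]]$ over a sufficiently large algebraically closed field $k$ of characteristic zero, such that for every $l$ the reductions of the $\fa_{j,0}$ modulo $\fm^l$ ($\fm$ the maximal ideal of $R$) agree, after a common change of coordinates, with those of the $\fa_{ij}$ for infinitely many $i$. Write $\fb=\prod_j\fa_{j,0}^{r_j}$ and $\fb_i=\prod_j\fa_{j,0}^{r_{ij}}$. The behaviour of minimal log discrepancies under generic limits from \cite{K14} gives $\mld_0(\Spec R,\fb_i)=m_i$ for infinitely many $i$. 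Since each $a_E(\Spec R,\prod_j\fa_{j,0}^{s_j})$ is affine and non-increasing in the exponents and $r_{ij}\le r_j$, we get $\mld_0(\Spec R,\fb)\le\mld_0(\Spec R,\fb_i)=m_i$ for infinitely many $i$, hence $\mld_0(\Spec R,\fb)\le m$; evaluating a divisor over $\Spec R$ computing $\mld_0(\Spec R,\fb)$ on the $\fb_i$ and letting $i\to\infty$ gives the reverse inequality, so $\mld_0(\Spec R,\fb)=m>1$.

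Now I would perturb and descend in dimension. Using $m>1$, one enlarges $\fb$ by a suitable $\bR$-ideal $\fc$ so that $(\Spec R,\fd)$ with $\fd=\fb\cdot\fc$ is lc but not klt at $0$ while a divisor attaining log discrepancy $0$ still records $m$ through its discrepancy along $\fb$. By Theorem~\ref{thm:centre} the smallest lc centre $C$ of $(\Spec R,\fd)$ exists and is normal; it is the closed point or a regular curve germ $\Spec k[[s]]$, the latter being the essential case. In that case subadjunction along the \emph{normal} curve $C$ produces an lc pair on $C$ whose $\bR$-ideal has exponents that are non-negative integral combinations of the $r_j$, of the exponents of $\fc$, and of the orders occurring in the different, and whose minimal log discrepancy is linked, through the discrepancy of the recording divisor, to $m$. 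The ACC for minimal log discrepancies in dimension at most $1$ is elementary; together with the DCC it confines the data transported to $C$ --- and hence ultimately $m$ and the $r_j$ --- to a finite set independent of the sequence, and pulling this back through the subadjunction, the perturbation, and the generic limit makes $(r_{ij})_i$ and $(m_i)_i$ eventually constant, contradicting the choice of the sequence. Then $I_0$ and $J_0$ may be taken to be the finite sets arising from this one-dimensional analysis.

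I expect the main obstacle to be the perturbation together with the ensuing subadjunction: one must choose $\fc$ so that $(\Spec R,\fd)$ is genuinely lc but not klt and so that its smallest lc centre is a curve, thereby making the normality in Theorem~\ref{thm:centre} essential, and then show that subadjunction to that normal curve transports the DCC to dimension one \emph{uniformly} in $i$. Subsidiary points needing care are the comparison of minimal log discrepancies through the generic limit when the $\fa_{j,0}$ fail to be $\fm$-primary, and the existence of a divisor over $\Spec R$ computing $\mld_0(\Spec R,\fb)$.
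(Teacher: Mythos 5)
Your overall skeleton (contradiction, completion, generic limit, then invoking Theorem \ref{thm:centre}) matches the paper's strategy at the top level, but the proof breaks down at its central step. You assert that ``the behaviour of minimal log discrepancies under generic limits from \cite{K14} gives $\mld_0(\Spec R,\fb_i)=m_i$ for infinitely many $i$.'' This equality is precisely the stability statement Conjecture \ref{cnj:stability} (= \cite[Conjecture 5.7]{K14}), which is \emph{open}; what \cite{K14} proves is only the ACC with $I$ finite, and what is known unconditionally is just the easy inequality $\mld_{P_K}(X_K,\sfa)\ge\mld_P(X,\fa_i)$ (Remark \ref{rmk:inequality}). Establishing the reverse inequality in the cases relevant to $d=3$ and $\mld>1$ is the actual content of the paper: the klt case (Theorem \ref{thm:klt}), the $\mld=0$ case (Corollary \ref{cor:adic0}), and the plt case with lc centre non-singular outside the closed point (Theorem \ref{thm:plt}). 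By citing this as known you have assumed the hardest part of the theorem.

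The subsequent handling of the non-klt limit also would not go through as written. First, after perturbing to an lc non-klt pair, the smallest lc centre can be a surface, a case you omit; in the paper this is exactly where the full strength of Theorem \ref{thm:plt} is needed. Second, for a curve centre you propose subadjunction to the normal curve $C$, but Kawamata-type subadjunction rests on vanishing theorems that are unknown over $\Spec K[[x_1,\ldots,x_d]]$ --- this is the very first Conjecture of the paper --- and even over a variety, transporting minimal log discrepancies (rather than mere log canonicity) through adjunction to an lc centre is an open problem; controlling the resulting coefficients ``uniformly in $i$'' is not something you can wave at. The paper avoids all of this: Proposition \ref{prp:curve} shows directly, by running a tower of blow-ups along the curve centres (whose non-singularity comes from Theorem \ref{thm:centre}), that a one-dimensional smallest lc centre forces $\mld_{P_K}(X_K,\sfa)\le1$, so this case simply cannot occur when $J\subset(1,3]$. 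This is the reason the theorem is stated only for minimal log discrepancies greater than $1$, and any correct proof must exploit that restriction at exactly this point.
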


The generic limit $\sfa$ of $\bR$-ideals $\fa_i$ on $P\in X=\Spec k[[x_1,\ldots,x_d]]$ is an $\bR$-ideal on $P_K\in X_K=\Spec K[[x_1,\ldots,x_d]]$ with a field extension $K$ of $k$. The ACC for minimal log discrepancies on non-singular $d$-folds is reduced to the stability $\mld_{P_K}(X_K,\sfa)=\mld_P(X,\fa_i)$ for general $i$. We prove it when $(X_K,\sfa)$ is a klt pair, or even a plt pair whose lc centre has an isolated singularity, by our previous arguments \cite{K13-1}, \cite{K13-2}. In dimension $3$, only the case when $(X_K,\sfa)$ has the smallest lc centre of dimension $1$ remains. In this case, the estimate $\mld_{P_K}(X_K,\sfa)\le1$ is derived from Theorem \ref{thm:centre}, which is enough to prove Theorem \ref{thm:acc}.

The structure of the paper is as follows. After reviewing the basics of singularities in Section \ref{sec:singularities}, we study the connectedness of the non-subklt locus and establish Theorem \ref{thm:centre} in Section \ref{sec:connect}. We discuss the ACC for minimal log discrepancies from the point of view of generic limits in Section \ref{sec:acc}. The stability of minimal log discrepancies in the klt and plt cases is shown in Section \ref{sec:kltplt}. Theorem \ref{thm:acc} is completed in Section \ref{sec:threefold}. The appendix exposing generic limits is attached.

Throughout this paper, $k$ is an algebraically closed field of characteristic zero.

\section{Singularities}\label{sec:singularities}
We review the basics of singularities in birational geometry. A good reference is \cite{Kl12}. A \textit{variety} is an integral separated scheme of finite type over $\Spec k$. A \textit{germ} of a scheme is considered at a closed point.

An $\bR$-\textit{ideal} on a noetherian scheme $X$ is a formal product $\fa=\prod_j\fa_j^{r_j}$ of finitely many coherent ideal sheaves $\fa_j$ on $X$ with positive real exponents $r_j$. The $\fa$ to the \textit{power} of $t>0$ is $\fa^t:=\prod_j\fa_j^{tr_j}$. The \textit{co-support} $\Cosupp\fa$ of $\fa$ is the union of all $\Supp\cO_X/\fa_j$. The \textit{pull-back} of $\fa$ by a morphism $Y\to X$ is $\fa\cO_Y:=\prod_j(\fa_j\cO_Y)^{r_j}$. The $\bR$-ideal $\fa$ is said to be \textit{invertible} if all $\fa_j$ are invertible. In this case, if in addition $X$ is normal, then the $\bR$-divisor $A=\sum_jr_jA_j$ with $\fa_j=\cO_X(-A_j)$ is called the $\bR$-divisor \textit{defined by} $\fa$.

Let $Z$ be an irreducible closed subset of $X$. We write $\eta_Z$ for the generic point of $Z$. The \textit{order} of $\fa$ along $Z$ is $\ord_Z\fa=\sum_jr_j\ord_Z\fa_j$, where $\ord_Z\fa_j$ is the maximal $\nu\in\bN\cup\{+\infty\}$ satisfying $\fa_j\cO_{X,\eta_Z}\subset\cI_Z^\nu\cO_{X,\eta_Z}$ for the ideal sheaf $\cI_Z$ of $Z$.

We treat a \textit{triplet} $(X,\Delta,\fa)$ which consists of a normal variety $X$, an effective $\bR$-divisor $\Delta$ on $X$ such that $K_X+\Delta$ is an $\bR$-Cartier $\bR$-divisor, and an $\bR$-ideal $\fa=\prod_j\fa_j^{r_j}$ on $X$. A prime divisor $E$ on a normal variety $Y$ with a birational morphism $f\colon Y\to X$ is called a divisor \textit{over} $X$, and the closure $\overline{f(E)}$ of the image on $X$ is called the \textit{centre} of $E$ on $X$ and denoted by $c_X(E)$. We denote by $\cD_X$ the set of all divisors over $X$. The \textit{log discrepancy} of $E$ with respect to $(X,\Delta,\fa)$ is
\begin{align*}
a_E(X,\Delta,\fa):=1+\ord_EK_{Y/(X,\Delta)}-\ord_E\fa,
\end{align*}
where $K_{Y/(X,\Delta)}:=K_Y-f^*(K_X+\Delta)$ and $\ord_E\fa:=\ord_E\fa\cO_Y$. Note that $c_X(E)$ and $a_E(X,\Delta,\fa)$ are determined by the valuation on the function field of $X$ given by $E$.

For an irreducible closed subset $Z$ of $X$, the \textit{minimal log discrepancy} of $(X,\Delta,\fa)$ at $\eta_Z$ is
\begin{align*}
\mld_{\eta_Z}(X,\Delta,\fa):=\inf\{a_E(X,\Delta,\fa)\mid E\in\cD_X,\ c_X(E)=Z\}.
\end{align*}
It is either a non-negative real number or $-\infty$. We say that $E\in\cD_X$ \textit{computes} $\mld_{\eta_Z}(X,\Delta,\fa)$ if $c_X(E)=Z$ and $a_E(X,\Delta,\fa)=\mld_{\eta_Z}(X,\Delta,\fa)$ (or is negative when $\mld_{\eta_Z}(X,\Delta,\fa)=-\infty$). It is often reduced to the case when $Z$ is a closed point by the relation $\mld_{\eta_Z}(X,\Delta,\fa)=\mld_P(X,\Delta,\fa)-\dim Z$ for a general closed point $P\in Z$ (cf.\ \cite[Proposition 2.1]{Am99}).

The triplet $(X,\Delta,\fa)$ is said to be \textit{log canonical} (\textit{lc}) (resp.\ \textit{Kawamata log terminal} (\textit{klt})) if $a_E(X,\Delta,\fa)\ge0$ (resp.\ $>0$) for all $E\in\cD_X$. It is said to be \textit{purely log terminal} (\textit{plt}) (resp.\ \textit{canonical}, \textit{terminal}) if $a_E(X,\Delta,\fa)>0$ (resp.\ $\ge1$, $>1$) for all exceptional $E\in\cD_X$. The log canonicity of $(X,\Delta,\fa)$ about $P\in X$ is equivalent to $\mld_P(X,\Delta,\fa)\ge0$. Let $Y$ be a normal variety with a birational morphism to $X$. A centre $c_Y(E)$ with $a_E(X,\Delta,\fa)\le0$ is called a \textit{non-klt centre} on $Y$ of $(X,\Delta,\fa)$. The union of all non-klt centres on $Y$ is called the \textit{non-klt locus} on $Y$ and denoted by $\Nklt_Y(X,\Delta,\fa)$. When we say just a non-klt centre or the non-klt locus, we mean that it is on $X$.

A \textit{log resolution} of $(X,\Delta,\fa)$ is a projective morphism $f\colon Y\to X$ from a non-singular variety $Y$ such that (i) $\Exc f$ is a divisor, (ii) $\fa\cO_Y$ is invertible, (iii) $\Exc f\cup\Supp\Delta_Y\cup\Cosupp\fa\cO_Y$ is a simple normal crossing (snc) divisor, where $\Delta_Y$ is the strict transform of $\Delta$, and (iv) $f$ is isomorphic on the locus $U$ in $X$ with $U$ non-singular, $\fa|_U$ invertible and $\Supp\Delta|_U\cup\Cosupp\fa|_U$ snc. A \textit{stratum} (resp.\ an \textit{open stratum}) of an snc divisor $\sum_{i\in I}E_i$ is an irreducible component of $\bigcap_{i\in J}E_i$ (resp.\ $\bigcap_{i\in J}E_i\setminus\bigcup_{i\not\in J}E_i$) for a subset $J$ of $I$.

By allowing a not necessarily effective $\bR$-divisor $\Delta$, one can consider a \textit{subtriplet} $(X,\Delta,\fa=\prod_j\fa_j^{r_j})$. The notions of lc (resp.\ klt) singularities are extended for subtriplets, in which we say \textit{sublc} (resp.\ \textit{subklt}) singularities. Let $f\colon Y\to X$ be a birational morphism from a non-singular variety $Y$ such that $\Exc f$ is a divisor $\sum_iE_i$. The \textit{weak transform} on $Y$ of $\fa$ is the $\bR$-ideal $\fa_Y=\prod_j\fa_{jY}^{r_j}$ with $\fa_{jY}=\fa_j\cO_Y(\sum_i(\ord_{E_i}\fa_j)E_i)$.

\begin{definition}
Notation as above. The \textit{pull-back} of $(X,\Delta,\fa)$ by $f$ is the subtriplet $(Y,\Delta_Y,\fa_Y)$ where $\Delta_Y=-K_{Y/(X,\Delta)}+\sum_{ij}(r_j\ord_{E_i}\fa_j)E_i$.
\end{definition}

The $(X,\Delta,\fa)$ is sublc (resp.\ subklt) if and only if so is $(Y,\Delta_Y,\fa_Y)$. We use the notation $\Nklt_Y(X,\Delta,\fa)$ also for the non-subklt locus on $Y$ of a subtriplet $(X,\Delta,\fa)$.

These definitions are extended on schemes over a field $K$ of characteristic zero and even over a formal power series ring $R=K[[x_1,\ldots,x_d]]$ by the existence of log resolutions due to Hironaka \cite{Hi64} and Temkin \cite{T08}, \cite{T-Arxiv}. This extension is studied by de Fernex, Ein and Musta\c{t}\u{a} \cite{dFEM11}, \cite{dFM09}. We mean by an \textit{$R$-variety} an integral separated scheme of finite type over $\Spec R$.

The canonical divisor $K_X$ on a normal $R$-variety $X$ is defined by the isomorphism $\cO_X(K_X)|_U\simeq\bigwedge^r\Omega'_{X/K}|_U$ on the non-singular locus $U$ of $X$, where $\Omega'_{X/K}$ is the \textit{sheaf of special differentials} in \cite{dFEM11} and $r$ is its rank. The relative canonical divisor is well understood for a birational morphism of non-singular $R$-varieties.

\begin{lemma}[{\cite[Remark A.12]{dFEM11}}]\label{lem:extension}
Let $Y\to X$ be a proper birational morphism of non-singular $R$-varieties. Then $K_{Y/X}$ is the effective divisor defined by the $0$-th Fitting ideal of $\Omega_{Y/X}$. In particular, $K_{Y/X}$ is independent of the structure of $X$ as an $R$-variety.
\end{lemma}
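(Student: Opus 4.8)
The plan is to deduce the identity from a local computation with the sheaf of special differentials $\Omega'_{-/K}$, the statement being local on $Y$. Write $r=\dim X=\dim Y$, which is also the rank of the locally free sheaves $\Omega'_{X/K}$ and $\Omega'_{Y/K}$ (locally free by non-singularity and the construction of $\Omega'$ in \cite{dFEM11}). The crucial input is the relative cotangent sequence for special differentials attached to $f\colon Y\to X$,
\[
f^*\Omega'_{X/K}\xrightarrow{\ \varphi\ }\Omega'_{Y/K}\longrightarrow\Omega_{Y/X}\longrightarrow0 ,
\]
whose cokernel is the \emph{ordinary} sheaf of relative Kähler differentials $\Omega_{Y/X}$. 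I would first justify this: the only discrepancy between $\Omega'$ and ordinary differentials lives in the $R/K$-directions, and these are pulled back from $\Spec R$ to both $X$ and $Y$, so they cancel upon forming relative differentials over $X$; this is checked after a local closed immersion $X\hookrightarrow\bA^n_R$, following the construction in the appendix of \cite{dFEM11}.

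Next I would note that $\varphi$ is injective: it is an isomorphism at the generic point of $Y$ because $f$ is birational, and its source is locally free, hence torsion-free on the integral scheme $Y$, so the (torsion) kernel vanishes. Thus $0\to f^*\Omega'_{X/K}\xrightarrow{\varphi}\Omega'_{Y/K}\to\Omega_{Y/X}\to0$ is exact with the first two terms locally free of rank $r$; in particular $\Omega_{Y/X}$ is a torsion $\cO_Y$-module supported on $\Exc f$.

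Now apply the determinantal description of the $0$-th Fitting ideal: for a short exact sequence $0\to F\xrightarrow{\varphi}G\to Q\to0$ with $F,G$ locally free of the same rank $r$, the ideal $\fb:=\mathrm{Fitt}_0(Q)$ is invertible and satisfies $\fb\cdot\bigwedge^rG=\operatorname{im}\bigl(\bigwedge^r\varphi\colon\bigwedge^rF\to\bigwedge^rG\bigr)$ (locally, $\varphi$ is an $r\times r$ matrix and $\fb$ is generated by $\det\varphi$). Taking $F=f^*\Omega'_{X/K}$, $G=\Omega'_{Y/K}$ and using $\bigwedge^r\Omega'_{X/K}\simeq\cO_X(K_X)$, $\bigwedge^r\Omega'_{Y/K}\simeq\cO_Y(K_Y)$ from the definition of the canonical divisor on a non-singular $R$-variety, the map $\bigwedge^r\varphi$ is the inclusion $\cO_Y(f^*K_X)\hookrightarrow\cO_Y(K_Y)$. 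Hence $\mathrm{Fitt}_0(\Omega_{Y/X})\simeq\cO_Y(f^*K_X-K_Y)\subset\cO_Y$, so the effective divisor it defines is $K_Y-f^*K_X=K_{Y/X}$; effectivity is forced by $\Omega_{Y/X}$ being torsion, which makes the Fitting ideal a nonzero ideal sheaf. Finally, $\Omega_{Y/X}$ depends only on the morphism of schemes $Y\to X$, with no reference to any $R$-structure, and therefore so do its Fitting ideal and $K_{Y/X}$.

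The main obstacle is the first step: making precise the relative cotangent sequence for special differentials and, in particular, the identification of its cokernel with the ordinary $\Omega_{Y/X}$, which requires unwinding the construction of $\Omega'$ in \cite{dFEM11} and the localisation that kills exactly the $R/K$-directions common to $X$ and $Y$. Once this is in place, the remainder is the classical ``determinant of the Jacobian'' computation.
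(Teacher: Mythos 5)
The paper offers no proof of this lemma: it is quoted verbatim from \cite[Remark A.12]{dFEM11}, and your argument is essentially a reconstruction of the proof given there — the exact sequence $f^*\Omega'_{X/K}\to\Omega'_{Y/K}\to\Omega_{Y/X}\to0$ with injective first map, followed by the determinantal description of the $0$-th Fitting ideal of the cokernel of a map of locally free sheaves of equal rank. Your own flagged obstacle (identifying the cokernel of the special-differentials sequence with the ordinary $\Omega_{Y/X}$, and the local freeness of $\Omega'$) is precisely the content of the appendix of \cite{dFEM11}, so the proposal is correct and follows the same route as the cited source.
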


The log discrepancies are preserved by field extensions and completions.

\begin{corollary}\label{cor:extension}
Let $Y\to X$ be as in Lemma \textup{\ref{lem:extension}}. Take an $R'$-variety $X'$ as in \textup{(\ref{itm:extension_field})}, \textup{(\ref{itm:extension_compl})} or \textup{(\ref{itm:extension_base})} below and set a morphism $Y'=Y\times_XX'\to X'$ of $R'$-varieties.
\begin{enumerate}
\item\label{itm:extension_field}
$X'$ is a component of $X\times_{\Spec R}\Spec R'$ with $R'=\widehat{R\otimes_KK'}$ for a field extension $K'$ of $K$.
\item\label{itm:extension_compl}
$X'=\Spec\widehat{\cO_{X,P}}$ for a germ $P\in X$, which admits the structure of an $R'$-variety for a suitable $R'=K'[[x_1,\ldots,x_{d'}]]$ by Cohen's structure theorem \cite{C46}.
\item\label{itm:extension_base}
$X'=X$ with another structure morphism $X\to\Spec R'$.
\end{enumerate}
Then $K_{Y'/X'}$ is the pull-back of $K_{Y/X}$. In particular, for an $\bR$-ideal $\fa$ on $X$, a divisor $E$ over $X$ and a germ $P\in X$, one has $a_{E'}(X',\fa\cO_{X'})=a_E(X,\fa)$ for a component $E'$ of $E\times_XX'$ and $\mld_{P'}(X',\fa\cO_{X'})=\mld_P(X,\fa)$ for a point $P'$ of $P\times_XX'$.
\end{corollary}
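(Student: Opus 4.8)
The plan is to reduce Corollary \ref{cor:extension} to Lemma \ref{lem:extension} by checking that in each of the three situations the morphism $Y'\to X'$ is again a proper birational morphism of non-singular $R'$-varieties, and that the formation of the $0$-th Fitting ideal of the sheaf of relative differentials commutes with the relevant base change. First I would treat case (\ref{itm:extension_base}): here $Y'\to X'$ is literally $Y\to X$ as a morphism of schemes, only the $R$-structure is changed, so Lemma \ref{lem:extension} applied to both structures—using the explicit assertion there that $K_{Y/X}$ is independent of the $R$-structure—gives $K_{Y'/X'}=K_{Y/X}$ at once, which is the pull-back along the identity. Case (\ref{itm:extension_compl}) is the completion $X'=\Spec\widehat{\cO_{X,P}}\to X$; flatness of completion ensures $Y'=Y\times_X X'\to X'$ is proper birational, non-singularity is preserved because a noetherian local ring is regular iff its completion is, and $\Omega_{Y'/X'}=\Omega_{Y/X}\otimes_{\cO_Y}\cO_{Y'}$ by base change for Kähler differentials, so the Fitting ideal pulls back; hence $K_{Y'/X'}$ is the pull-back of $K_{Y/X}$ by Lemma \ref{lem:extension}.

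For case (\ref{itm:extension_field}), with $R'=\widehat{R\otimes_K K'}$ and $X'$ a component of $X\times_{\Spec R}\Spec R'$, I would argue that $R\to R'$ is flat (it factors as the flat base change $R\otimes_K K'$ followed by completion, both flat), so each component $X'$ is flat over $X$; that $X'$ is regular because $K$ has characteristic zero, hence $K'$ is separable over $K$ and geometrically regular fibres are preserved under the field extension and then under completion; and that $\Omega_{Y'/X'}$ is the pull-back of $\Omega_{Y/X}$ since relative differentials commute with flat—indeed arbitrary—base change on the base. Then again Lemma \ref{lem:extension} yields that $K_{Y'/X'}$, being defined by the Fitting ideal, is the pull-back of $K_{Y/X}$. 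In all three cases I should also note that $K_{Y/X}$ pulling back along $Y'\to Y$ is compatible with how $K_{Y'}$ and $K_{X'}$ relate to $K_Y$ and $K_X$ under these base changes, but since Lemma \ref{lem:extension} circumvents the canonical divisors entirely and works with the intrinsic Fitting ideal, this bookkeeping is automatic.

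Once $K_{Y'/X'}=($pull-back of $K_{Y/X})$ is established, the consequences for log discrepancies follow formally. Given a divisor $E$ over $X$, realise it on some non-singular $Y$ with $Y\to X$ proper birational and $\fa\cO_Y$ invertible; a component $E'$ of $E\times_X X'$ is a divisor on the non-singular $R'$-variety $Y'$ with the same valuation-theoretic data because the base changes above are flat and preserve the relevant regular local rings, so $\ord_{E'}K_{Y'/X'}=\ord_E K_{Y/X}$ and $\ord_{E'}\fa\cO_{X'}=\ord_E\fa$ (the latter since $\fa\cO_{Y'}=(\fa\cO_Y)\otimes\cO_{Y'}$ stays invertible with the same multiplicities along $E'$). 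This gives $a_{E'}(X',\fa\cO_{X'})=a_E(X,\fa)$. For the minimal log discrepancy statement, the inequality $\mld_{P'}(X',\fa\cO_{X'})\le\mld_P(X,\fa)$ is immediate from the divisor-by-divisor equality applied to divisors with centre at $P$; the reverse inequality requires that every divisor $E''$ over $X'$ with centre $P'$ arises, up to the valuation it defines, from some $E$ over $X$ with centre $P$—this is the standard descent argument for mld under faithfully flat extensions of local rings in characteristic zero, and it is where the main subtlety lies.

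The main obstacle, then, is precisely this last point: bounding $\mld_{P'}$ from above by $\mld_P$, i.e. showing no \emph{new} divisor over $X'$ with smaller log discrepancy appears after the extension. I expect to handle it by the usual trick of taking a log resolution $Y\to X$ of $(X,\fa)$, base-changing to a log resolution $Y'\to X'$ of $(X',\fa\cO_{X'})$ (which remains a log resolution because snc divisors and exceptional loci are preserved under the geometrically regular base changes in (\ref{itm:extension_field})–(\ref{itm:extension_base})), and invoking the fact that $\mld$ at a point can be computed on any log resolution as the minimum over the finitely many prime divisors of $\Delta_{Y'}$ plus a contribution from the open strata; since these prime divisors on $Y'$ are exactly the components of pull-backs of the prime divisors on $Y$, the computation downstairs and upstairs agree. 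This reduces everything to the divisorial equality already proved, completing the corollary.
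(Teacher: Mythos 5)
Your proposal is correct and follows essentially the same route as the paper: the paper's proof is the one-line observation that $X'\to X$ is a regular morphism (citing \cite{dFEM11} for the $R=K$ cases), and your argument is precisely the standard unpacking of that fact — flatness and preservation of regularity in each of the three cases, base change of $\Omega_{Y/X}$ and hence of its Fitting ideal, and descent of the mld computation through a base-changed log resolution. Nothing in your write-up diverges from the intended argument; it just supplies the details the paper leaves to the references.
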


This is by the regularity of the morphism $X'\to X$. The cases (\ref{itm:extension_field}) and (\ref{itm:extension_compl}) for $R=K$ are stated in \cite[Lemma 2.14, Propositions 2.11, A.14]{dFEM11} even for a normal ($\bQ$-Gorenstein) $K$-variety $X$.

Suppose that $(X,\Delta,\fa)$ is lc. Then a non-klt centre (on $X$) of $(X,\Delta,\fa)$ is often called an \textit{lc centre}. An lc centre which is minimal with respect to inclusions is called a \textit{minimal lc centre}. When we work over a germ $P\in X$, the following definition makes sense.

\begin{definition}
Let $P\in(X,\Delta,\fa)$ be a germ of an lc triplet. The \textit{smallest lc centre} is an lc centre of $(X,\Delta,\fa)$ passing through $P$ contained in every lc centre passing through $P$.
\end{definition}

If $X$ is a variety, then the smallest lc centre exists and it is normal \cite[Theorem 9.1]{F11}. It is, however, unknown for $R$-varieties. Theorem \ref{thm:centre} states that this is the case when $X$ is a non-singular $R$-variety of dimension $3$.

\section{The smallest lc centre on a threefold}\label{sec:connect}
This section is devoted to the proof of Theorem \ref{thm:centre}. We work over a germ $P\in X$ of an $R$-variety with $R=K[[x_1,\ldots,x_d]]$. The maximal ideal sheaf of $P\in X$ is denoted by $\fm$. When we discuss on the spectrum of a noetherian ring, we identify an ideal in the ring with its coherent ideal sheaf.

\subsection{A connectedness theorem}
We prove a connectedness theorem over $X$.

\begin{theorem}\label{thm:connect}
Let $P\in(X,\fa)$ be a germ of a pair on a non-singular $R$-variety $X$ and $f\colon Y\to X$ a proper birational morphism of non-singular $R$-varieties which is isomorphic outside $P$. Let $\Delta$ be an $\bR$-divisor on $Y$ with $f_*\Delta\ge0$ such that $-(K_Y+\Delta)$ is $f$-nef. Then $\Nklt_Y(Y,\Delta,\fa\cO_Y)\cap f^{-1}(P)$ is connected.
\end{theorem}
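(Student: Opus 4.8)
The plan is to reduce the statement to the classical connectedness lemma over an affine space, following the strategy sketched in the introduction. First I would pass to the case $X=\Spec R$ with $K=k$ algebraically closed: by Corollary~\ref{cor:extension} the relevant log discrepancies, hence the non-subklt locus, are unchanged under base change to $\widehat{\cO_{X,P}}$ and under a field extension, and $f^{-1}(P)$ together with its scheme structure is preserved. So from now on $X=\Spec R$, $\fm$ is the maximal ideal, and $f$ is an isomorphism over $X\setminus\{P\}$, so $\Exc f\subset f^{-1}(P)$ and $\Nklt_Y(Y,\Delta,\fa\cO_Y)$ meets $f^{-1}(P)$ only in a union of strata of $f^{-1}(P)$ after a further log resolution.

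The key construction is the approximation of $f_*\Delta$, an effective $\bR$-divisor on $X$, by an $\fm$-primary $\bR$-ideal. Since $f_*\Delta\ge0$ is supported in $\{P\}$ (being trivial away from $P$, as $f$ is an isomorphism there — more precisely $f_*\Delta$ is effective and its non-effective part would have to live over $P$, so I would split $\Delta=\Delta^{+}-\Delta^{-}$ with $\Delta^{-}$ $f$-exceptional), I want to choose $l$ large and an ideal $\fa\agl{l}$ with $\ord_E\fa\agl{l}$ agreeing with $\ord_E\fa\cO_Y + \ord_E(f^*f_*\Delta)$ to sufficiently high order along every stratum of $f^{-1}(P)$ meeting the non-subklt locus, so that the non-subklt locus of the subtriplet built from $\fa\cdot\fa\agl{l}$ has the same central fibre $\Nklt_Y(\cdot)\cap f^{-1}(P)$. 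Concretely, on a log resolution $g\colon Y'\to Y$ with $h=f\circ g$, I write the pull-back of $(Y,\Delta,\fa\cO_Y)$ and pick $\fa\agl{l}=\fm^{c}$ (or a product of such powers along the finitely many relevant $E_i$) with $c$ adjusted so that the coefficient of each $E_i$ over $P$ in the boundary crosses $1$ exactly when it did originally; because there are finitely many such divisors and $\ord_{E_i}\fm>0$ for each, a single large $l$ works. The point is that $\fa\cdot\fm^{c}$ now has a genuine effective $\bR$-divisor incarnation is false, but it does descend: $\fm^{c}$ and the $\fa_j$ are all defined by polynomial equations after truncation, so there is $N$ with $\fa\cdot\fm^{c}$ depending only on $R/\fm^{N}=k[x]/( x)^{N}$, hence pulled back from an $\fm_0$-primary $\bR$-ideal $\fa_0$ on $\bA^d_k$ at the origin $0$ with the same log resolution behaviour.

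Now on $\bA^d_k$ I would invoke the connectedness lemma of Shokurov--Koll\'ar: take the induced proper birational $f_0\colon Y_0\to \bA^d_k$ (obtained by spreading out $Y$, or simply a log resolution of $(\bA^d_k,\fa_0)$), with a boundary $\Delta_0$ on $Y_0$ such that $f_{0*}\Delta_0\ge0$ and $-(K_{Y_0}+\Delta_0)$ is $f_0$-nef — indeed $f_0$-semiample, since $\Delta_0$ comes from the $\fm_0$-primary ideal and the exceptional divisor is $f_0$-anti-nef. The classical statement then gives that $\Nklt_{Y_0}(Y_0,\Delta_0,\fa_0\cO_{Y_0})\cap f_0^{-1}(0)$ is connected. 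Finally I would transport this back: the completion of $\bA^d_k$ at $0$ is $X=\Spec R$, and by flatness of completion together with Corollary~\ref{cor:extension} the non-subklt locus and its intersection with the central fibre are identified, so connectedness descends to $\Nklt_Y(Y,\Delta,\fa\cO_Y)\cap f^{-1}(P)$, which by construction equals the non-subklt locus of the $\fa_0$-subtriplet restricted to the central fibre.

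The main obstacle I anticipate is the approximation step: making precise that a single $\fm$-primary $\bR$-ideal $\fa\agl{l}$ can simultaneously match the weak-transform orders along all the finitely many divisors $E_i$ over $P$ that are relevant to the non-subklt locus, \emph{without} creating new non-subklt centres in the central fibre and without disturbing the (empty) non-subklt locus away from $P$. This requires knowing that only finitely many $E_i$ matter — which follows from fixing one log resolution of $(Y,\Delta,\fa\cO_Y)$ — and a careful bookkeeping of coefficients of the form $a_{E_i}=1+\ord_{E_i}K_{Y'/Y}-\ord_{E_i}\fa\cO_{Y'}-\ord_{E_i}\Delta_{Y'}$ versus the perturbation $l\cdot\ord_{E_i}\fm$; the inequality $\ord_{E_i}\fm\ge1$ for divisors centred at $P$ is what guarantees a uniform choice of $l$. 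The descent to $\bA^d_k$ and the two applications of Corollary~\ref{cor:extension} are then formal.
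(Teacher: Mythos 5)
Your overall route is the same as the paper's: reduce to $X=\Spec R$ with $K=k$ and $\Delta$ $f$-exceptional, replace $\fa$ by an $\fm$-primary approximation whose non-subklt locus equals the central fibre of the original one, descend everything to $\bA_k^d$ by Proposition~\ref{prp:descendible}, and apply the Shokurov--Koll\'ar connectedness lemma there. Two smaller inaccuracies first: $f_*\Delta$ is \emph{not} supported at $P$ (it is a genuine effective divisor on $X$ containing the images of the non-exceptional components of $\Delta$); the paper handles it by noting that $\Delta-f^*f_*\Delta$ is $f$-exceptional and absorbing $f_*\Delta$ into an invertible $\bR$-ideal $\fd$ multiplied into $\fa$. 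Also, a product $\fa\cdot\fm^c$ is not $\fm$-primary, so for the descent to $\bA_k^d$ you must replace each $\fa_j$ by $\fa_j+\fm^l$ (as in $\fa\agl{l}$), not merely multiply by a power of $\fm$.

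The genuine gap is in the approximation step, exactly where you anticipated trouble. Your mechanism --- choosing $l$ so that the boundary coefficients of the finitely many divisors $E_i$ \emph{over $P$} on a fixed log resolution are unchanged --- does not account for the components of $\Nklt_W(Y,\Delta,\fa\cO_Y)\cap g^{-1}(P)$ of codimension $2$ in $W$. Such a component $D$ lies on a non-subklt divisor $F_D$ that is \emph{not} contracted to $P$; since $\ord_{F_D}\fm=0$, one has $\ord_{F_D}(\fa_j+\fm^l)=0$, so $F_D$ ceases to be non-subklt for the $\fm$-primary triplet, and no divisor visible on the fixed resolution has centre $D$ with non-positive log discrepancy. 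Matching orders along strata, however high, cannot repair this: you must exhibit \emph{new} valuations centred along $D$ that are non-subklt for $\fa\agl{l}$. The paper does so by first fixing $\varepsilon>0$ with $\Nklt_W(Y,\Delta,\fa\cO_Y)=\Nklt_W(Y,\Delta,\fa^{1+\varepsilon}\cO_Y)$, building the exponent $r_j(1+\varepsilon)$ into $\fa\agl{l}$, and then running the tower of blow-ups of Lemma~\ref{lem:tower}: blowing up $D$ and then repeatedly $E_{i-1}\cap F_{i-1}$ produces divisors $E_i$ with centre $D$ whose log discrepancy with respect to $\fa^{1+\varepsilon}$ drops by $i\varepsilon\ord_{F_D}\fa>0$, hence becomes $\le 0$ for some $i$; since $c_W(E_i)=D\subset g^{-1}(P)$ gives $\ord_{E_i}\fm>0$, a large $l$ then yields $a_{E_i}(Y,\Delta,\fa\agl{l}\cO_Y)=a_{E_i}(Y,\Delta,\fa^{1+\varepsilon}\cO_Y)\le0$. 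Without this device (or an equivalent one) the claimed identification of the two central fibres, and hence the transfer of connectedness from $\bA_k^d$, does not go through.
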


We extract the case $\Delta=-K_{Y/X}$.

\begin{corollary}\label{cor:connect}
Let $P\in(X,\fa)$ be a germ of a pair on a non-singular $R$-variety $X$ and $f\colon Y\to X$ a proper birational morphism of non-singular $R$-varieties which is isomorphic outside $P$. Then $\Nklt_Y(X,\fa)\cap f^{-1}(P)$ is connected.
\end{corollary}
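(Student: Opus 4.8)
The plan is to deduce Corollary \ref{cor:connect} from Theorem \ref{thm:connect} by choosing $\Delta=-K_{Y/X}$ on $Y$. Since $Y$ is non-singular, $K_Y$ is Cartier and $K_{Y/X}=K_Y-f^*K_X$ is an integral divisor, so $(Y,-K_{Y/X},\fa\cO_Y)$ is a legitimate subtriplet to which the theorem applies.

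First I would check the two hypotheses of Theorem \ref{thm:connect} for this $\Delta$. By Lemma \ref{lem:extension}, $K_{Y/X}$ is effective, and its support is contained in $\Exc f$, which is contained in $f^{-1}(P)$ because $f$ is an isomorphism outside $P$; hence $f_*\Delta=-f_*K_{Y/X}=0\ge0$. Moreover $-(K_Y+\Delta)=-(K_Y-K_{Y/X})=-f^*K_X$ is the pull-back of a divisor from $X$, so it is $f$-numerically trivial and in particular $f$-nef.

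Next I would identify the non-subklt locus of $(Y,-K_{Y/X},\fa\cO_Y)$ on $Y$ with $\Nklt_Y(X,\fa)$. A divisor $E\in\cD_X=\cD_Y$ realised on $g\colon Z\to Y$, with $h=f\circ g\colon Z\to X$, satisfies $K_{Z/(Y,\Delta)}=K_Z-g^*(K_Y+\Delta)=K_Z-h^*K_X=K_{Z/X}$ and $\ord_E(\fa\cO_Y)=\ord_E\fa$; hence $a_E(Y,\Delta,\fa\cO_Y)=a_E(X,\fa)$ for every such $E$, so the sets of centres $c_Y(E)$ with non-positive log discrepancy coincide, and the two non-klt loci on $Y$ agree. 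Applying Theorem \ref{thm:connect} then yields that $\Nklt_Y(X,\fa)\cap f^{-1}(P)$ is connected. I do not expect any real obstacle here beyond these bookkeeping identities, since all the substance is contained in Theorem \ref{thm:connect}.
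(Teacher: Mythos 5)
Your proposal is correct and is exactly the paper's argument: the paper derives the corollary from Theorem \ref{thm:connect} by ``extracting the case $\Delta=-K_{Y/X}$,'' and your verifications that $f_*K_{Y/X}=0$, that $-(K_Y+\Delta)=-f^*K_X$ is $f$-nef, and that the log discrepancies of $(Y,-K_{Y/X},\fa\cO_Y)$ agree with those of $(X,\fa)$ are precisely the bookkeeping the paper leaves implicit.
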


The statement for $R=k$ is a special case of the connectedness lemma by Sho-
\linebreak
kurov and Koll\'ar \cite[Theorem 17.4]{Kl92}. It settles the case when $\fa$ is $\fm$-primary and $\Delta$ is $f$-exceptional. Write $\fa=\prod_j\fa_j^{r_j}$.

\begin{lemma}\label{lem:connect}
\begin{enumerate}
\item\label{itm:connect_reduction}
In order to prove Theorem \textup{\ref{thm:connect}}, one may assume that $X=\Spec R$ with $K=k$, $f$ is projective and $\Delta$ is $f$-exceptional.
\item\label{itm:connect_primary}
Theorem \textup{\ref{thm:connect}} holds in the case when $X=\Spec R$ with $K=k$, $f$ is projective, $\Delta$ is $f$-exceptional and all $\fa_j$ are $\fm$-primary ideals.
\end{enumerate}
\end{lemma}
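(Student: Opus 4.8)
For part (\ref{itm:connect_reduction}), the plan is to reduce to the stated normal form in three steps. First I would replace $X$ by the completion $\Spec\widehat{\cO_{X,P}}$; by Cohen's structure theorem this is of the form $\Spec R'$ with $R'=K'[[x_1,\ldots,x_{d'}]]$, and since $f$ is an isomorphism outside $P$ the base change $Y'=Y\times_XX'\to X'$ is again a projective (since it is now proper over an affine base and birational with exceptional locus lying over a point, hence projective) birational morphism of non-singular $R'$-varieties. The invariance of log discrepancies under completion from Corollary \ref{cor:extension}(\ref{itm:extension_compl}) shows that $\Nklt$ is preserved, and connectedness of the fibre is unaffected. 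Next I would pass from $K'$ to an algebraic closure, i.e.\ the $R''=\widehat{R'\otimes_{K'}\overline{K'}}$-variety of Corollary \ref{cor:extension}(\ref{itm:extension_field}); one must check that the relevant component is geometrically connected over the closed point so that connectedness descends, which follows because $f''$ is an isomorphism away from the (now a single, $k$-rational) closed point and $Y$ is already geometrically connected there after the previous step — here I would argue that $\Nklt\cap f^{-1}(P)$ connected upstairs implies the same downstairs by a standard Stein-type argument on the finite extension of residue fields. Finally, to arrange $\Delta$ to be $f$-exceptional: write $\Delta=\Delta^{\mathrm{exc}}+\Delta'$ where $\Delta'$ has no exceptional components; since $f_*\Delta=f_*\Delta'\ge0$ and $f$ is an isomorphism outside $P$, the non-exceptional part $\Delta'$ is effective and supported away from $f^{-1}(P)$, so replacing $\fa$ by $\fa\cdot\cO_X(-f_*\Delta')^{-1}$ — more carefully, absorbing $f^*f_*\Delta'$ into the boundary — changes neither the $f$-nefness of $-(K_Y+\Delta)$ nor $\Nklt_Y\cap f^{-1}(P)$, and the residual $\Delta-f^*f_*\Delta'$ is $f$-exceptional.

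For part (\ref{itm:connect_primary}), the plan is to descend to the affine space and invoke the classical connectedness lemma. With $X=\Spec R$, $R=k[[x_1,\ldots,x_d]]$, and all $\fa_j$ being $\fm$-primary, each $\fa_j$ is generated by finitely many polynomials in $k[x_1,\ldots,x_d]$ up to a high power of $\fm$; since only finitely many infinitely-near valuations on $Y$ along $f^{-1}(P)$ matter for computing $\Nklt_Y$, and each such valuation has finite order on $\fm$, I can choose $\bar\fa_j\subset k[x_1,\ldots,x_d]$ with $\bar\fa_j\cdot R$ agreeing with $\fa_j$ modulo a power of $\fm$ that is high enough to leave all these orders unchanged, so that $\Nklt_Y(Y,\Delta,\fa\cO_Y)\cap f^{-1}(P)=\Nklt_{\bar Y}(\bar Y,\bar\Delta,\bar\fa\cO_{\bar Y})\cap\bar f^{-1}(\bar P)$ after spreading $f$ and $\Delta$ out to a projective birational $\bar f\colon\bar Y\to\bA_k^d$ isomorphic outside $\bar P=0$ (possible because $\bar Y$ and the blow-up data are of finite type and the relevant locus is proper). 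I would need to check that $-(K_{\bar Y}+\bar\Delta)$ can be kept $\bar f$-nef — this is automatic if $\bar\Delta$ is taken $\bar f$-exceptional, since $f$-nefness of an $f$-exceptional anticanonical-type divisor over a point is a condition only on the central fibre, which is literally the same scheme upstairs and downstairs. Then Koll\'ar's connectedness lemma \cite[Theorem 17.4]{Kl92} over the affine variety $\bA_k^d$ applies verbatim: $-(K_{\bar Y}+\bar\Delta)$ is $\bar f$-nef and $\bar f$-big (big because $\bar\Delta$ is exceptional, or by a small perturbation using the $\fm$-primary $\bar\fa$), so $\Nklt_{\bar Y}\cap\bar f^{-1}(\bar P)$ is connected.

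The main obstacle I anticipate is the bookkeeping in part (\ref{itm:connect_primary}) — making precise that a single, explicitly computable power $\fm^N$ suffices to pin down $\Nklt_Y\cap f^{-1}(P)$, so that the $\fm$-primary ideals $\fa_j$ truncate to genuine polynomial ideals without changing the non-subklt locus over the central fibre. This requires knowing a priori a bound on the orders $\ord_{E_i}\fa_j$ for the finitely many prime divisors $E_i$ appearing in a fixed log resolution over $P$; such a bound exists because a log resolution of $(X,\Delta,\fa)$ factors through a composition of blow-ups with centres over $P$, and each $\ord_{E_i}$ is a fixed divisorial valuation whose value on $\fm$ is finite. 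The field-descent issue in (\ref{itm:connect_reduction}) is comparatively soft, resting only on the already-cited invariance results in Corollary \ref{cor:extension} together with the elementary fact that fibrewise connectedness is insensitive to finite or even arbitrary separable-algebraic extensions of the residue field at the point $P$.
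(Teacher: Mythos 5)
Your overall route coincides with the paper's: complete at $P$, pass to an algebraic closure of the residue field (connectedness descends because a disconnected non-subklt locus would stay disconnected after base change), push the non-exceptional part of $\Delta$ into the ideal, and in the $\fm$-primary case descend everything to $\bA_k^d$ and quote Koll\'ar's connectedness lemma. However, there are two genuine gaps.

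First, in part (\ref{itm:connect_reduction}) you assert that after completion $f$ is automatically projective because it is ``proper over an affine base and birational with exceptional locus lying over a point.'' This implication is false in general (Hironaka-type constructions give proper, non-projective birational modifications of a non-singular threefold that are isomorphisms outside a single point), and nothing in the hypotheses rules it out. The paper instead invokes the flattening theorem of Raynaud and Gruson to produce a projective $f'\colon Y'\to X$, isomorphic outside $P$, factoring through $f$, and then replaces $(Y,\Delta)$ by its pull-back to $Y'$. Projectivity is not cosmetic here: your own descent in part (\ref{itm:connect_primary}) needs $f$ to be the blow-up of an $\fm$-primary ideal (Proposition \ref{prp:descendible}), which uses projectivity. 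A minor further slip in the same step: the non-exceptional part of $\Delta$ is the strict transform of $f_*\Delta$ and in general does meet $f^{-1}(P)$, so it is not ``supported away from $f^{-1}(P)$''; this does not affect the absorption argument, which is the same as the paper's ($\Delta'=\Delta-f^*f_*\Delta$ and multiply $\fa$ by the invertible $\bR$-ideal defining $f_*\Delta$).

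Second, in part (\ref{itm:connect_primary}) your truncation argument is both unnecessary and, as stated, not sound. Since each $\fa_j$ is $\fm$-primary it contains some $\fm^l$, and $\bar{R}/\bar\fm^l\to R/\fm^l$ is an isomorphism, so $\fa_j=(\fa_j\cap\bar{R})R$ \emph{exactly}; no approximation is needed, and this is precisely how the paper descends $\fa$. By contrast, choosing $\bar\fa_j$ that merely agrees with $\fa_j$ modulo a high power of $\fm$ and matches orders along the finitely many divisors of one fixed log resolution does not by itself determine the non-subklt locus of the modified triplet, because that resolution need not be a log resolution of the modified ideal; controlling this kind of perturbation is exactly the nontrivial content of Lemma \ref{lem:fat} later in the section, and you cannot get it for free here. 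Once the exact descent is used, $f^{-1}(P)\simeq\bar f^{-1}(\bar P)$, the $f$-exceptional $\Delta$ descends to an $\bar f$-exceptional $\bar\Delta$ with $-(K_{\bar Y}+\bar\Delta)$ $\bar f$-nef (nefness is a condition on curves in the central fibre, as you note, and relative bigness over a birational morphism is automatic), and the classical connectedness lemma finishes the proof.
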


\begin{proof}
(\ref{itm:connect_reduction})\
Take an isomorphism $\widehat{\cO_{X,P}}\simeq K'[[x_1,\ldots,x_{d'}]]$ with $K'=\cO_{X,P}/\fm$ by Cohen's structure theorem and set $R'=k[[x_1,\ldots,x_{d'}]]$ for the algebraic closure $k$ of $K'$. Because the base change $\Spec R'\to X$ commutes with taking the non-subklt locus by Corollary \ref{cor:extension}, we may assume $X=\Spec R$ with $K=k$ (the $d$ may be changed). By the flattening theorem of Raynaud and Gruson \cite[Th\'eor\`eme $\textrm{1}^{\textrm{re}}$ 5.2.2]{RG71}, there exists a projective morphism $f'\colon Y'\to X$ from a non-singular $R$-variety $Y'$ which is isomorphic outside $P$ and factors through $f$. Replacing $(Y,\Delta)$ with its pull-back on $Y'$, we may assume that $f$ is projective. The $\Delta':=\Delta-f^*f_*\Delta$ is $f$-exceptional. Take an invertible $\bR$-ideal $\fd$ on $X$ which defines the $\bR$-divisor $f_*\Delta\ge0$. Then $\Nklt_Y(Y,\Delta,\fa\cO_Y)=\Nklt_Y(Y,\Delta',\fa\fd\cO_Y)$. Replacing $\Delta$ with $\Delta'$ and $\fa$ with $\fa\fd$, we may assume that $\Delta$ is $f$-exceptional.

(\ref{itm:connect_primary})\
We use the notation $\bar{R}=k[x_1,\ldots,x_d]$ and $\bA_k^d=\Spec\bar{R}$ with origin $\bar{P}$. By Proposition \ref{prp:descendible}, $f$ is the base change of a projective morphism $\bar{f}\colon\bar{Y}\to\bA_k^d$ and $\fa$ is the pull-back of the $\bR$-ideal $\bar\fa=\prod_j(\fa_j\cap\bar{R})^{r_j}$. Then $f^{-1}(P)\simeq\bar{f}^{-1}(\bar{P})$ and $\Delta$ is the base change of an $\bar{f}$-exceptional $\bR$-divisor $\bar\Delta$ such that $-(K_{\bar{Y}}+\bar\Delta)$ is $\bar{f}$-nef. Thus $ f^{-1}(P)\supset\Nklt_Y(Y,\Delta,\fa\cO_Y)\simeq\Nklt_{\bar{Y}}(\bar{Y},\bar\Delta,\bar\fa\cO_{\bar{Y}})$, which is connected by \cite[Theorem 17.4]{Kl92}.
\end{proof}

We take a log resolution $q\colon W\to Y$ of $(Y,\Delta,\fa\fm\cO_Y)$ and set the composition $g=f\circ q\colon W\to X$. We fix $\varepsilon>0$ such that
\begin{align}\label{eqn:F}
F:=\Nklt_W(Y,\Delta,\fa\cO_Y)=\Nklt_W(Y,\Delta,\fa^{1+\varepsilon}\cO_Y).
\end{align}
We approximate $\fa$ by an $\fm$-primary $\bR$-ideal
\begin{align}\label{eqn:al}
\fa\agl{l}:=\prod_j(\fa_j+\fm^l)^{r_j(1+\varepsilon)}
\end{align}
with $l\in\bN$.

Consider an irreducible component $D$ of $F\cap g^{-1}(P)$ with $\codim_WD=2$, and let $E_D\subset g^{-1}(P)$ and $F_D\subset F$ be the prime divisors such that $D\subset E_D\cap F_D$. We build a tower of blow-ups
\begin{align}\label{eqn:tower}
\cdots\to W_i\xrightarrow{g_i}W_{i-1}\to\cdots\to W_0=W
\end{align}
as follows. Set $W_0:=W$, $E_0:=E_D$ and $F_0:=F_D$. We construct inductively the blow-up $g_i\colon W_i\to W_{i-1}$ along $D$ for $i=1$ (resp.\ along $E_{i-1}\cap F_{i-1}$ for $i\ge2$), and set $E_i$ as the exceptional divisor of $g_i$, and $F_i$ as the strict transform on $W_i$ of $F_D$. The composition $g_1\circ\cdots\circ g_i$ is denoted by $h_i\colon W_i\to W$.

\begin{lemma}\label{lem:tower}
\begin{enumerate}
\item\label{itm:tower_ld}
$a_{E_i}(Y,\Delta,\fa^{1+\varepsilon}\cO_Y)\le a_{E_D}(Y,\Delta,\fa^{1+\varepsilon}\cO_Y)-i\varepsilon\ord_{F_D}\fa$.
\item\label{itm:tower_ideal}
$h_{i*}\cO_{W_i}(-aE_i)\subset\cO_W(-aE_D)+\cO_W(-F_D)$ for any $a\in\bN$.
\end{enumerate}
\end{lemma}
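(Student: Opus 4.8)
The plan is to track how the blow-ups in the tower (\ref{eqn:tower}) change the relevant orders, and to do this one blow-up at a time, using the inductive structure of the construction. Set $\delta:=\ord_{F_D}\fa$. I would first establish the base behaviour for $g_1$, the blow-up along $D$, and then the uniform behaviour for each $g_i$ with $i\ge2$, the blow-up along $E_{i-1}\cap F_{i-1}$.

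\emph{Step 1: the discrepancy estimate (\ref{itm:tower_ld}).} On $W$ write the pull-back $(W,\Delta_W,(\fa\fm)_W)$ of $(Y,\Delta,\fa\fm\cO_Y)$ (or just of $(Y,\Delta,\fa^{1+\varepsilon}\cO_Y)$), so that, since $q$ is a log resolution, $\Exc g\cup\Supp\Delta_W\cup\Cosupp\fa\cO_W$ is snc; in particular $E_D$ and $F_D$ meet transversally along $D$. The first blow-up $g_1$ is along the codimension-$2$ smooth stratum $D=E_D\cap F_D$, so the classical formula for blow-ups of snc pairs gives $\ord_{E_1}K_{W_1/W}=1$ and $\ord_{E_1}(E_D+F_D)=2$, hence
\begin{align*}
a_{E_1}(W,\Delta_W,\fa^{1+\varepsilon}\cO_W)=a_{E_D}(W,\Delta_W,\fa^{1+\varepsilon}\cO_W)+a_{F_D}(W,\Delta_W,\fa^{1+\varepsilon}\cO_W)-1.
\end{align*}
Since $F_D\subset F=\Nklt_W(Y,\Delta,\fa^{1+\varepsilon}\cO_Y)$ by (\ref{eqn:F}), the divisor $F_D$ appears in $\Delta_W$ with coefficient $\ge1$, i.e.\ its log discrepancy with respect to $(W,\Delta_W,\cO_W)$ is $\le0$; subtracting the extra $\varepsilon\cdot\ord_{F_D}\fa=\varepsilon\delta$ coming from the power $1+\varepsilon$ instead of $1$ shows $a_{F_D}(W,\Delta_W,\fa^{1+\varepsilon}\cO_W)\le -\varepsilon\delta$. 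This yields the $i=1$ case. For $i\ge2$, $g_i$ blows up $E_{i-1}\cap F_{i-1}$, again a transverse intersection of two divisors (on $W_i$ the total transform of everything stays snc), and the analogous blow-up formula gives $a_{E_i}=a_{E_{i-1}}+a_{F_{i-1}}-1$; the point is that $F_{i-1}$, being the strict transform of $F_D$, still has log discrepancy $\le -\varepsilon\delta$, while $a_{E_{i-1}}$ decreases by at least $\varepsilon\delta$ at each step by induction. Summing the decrements gives the stated bound.

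\emph{Step 2: the ideal-theoretic inclusion (\ref{itm:tower_ideal}).} I would prove $h_{i*}\cO_{W_i}(-aE_i)\subset\cO_W(-aE_D)+\cO_W(-F_D)$ by induction on $i$. For $i=1$: $g_1$ is the blow-up along $D=E_D\cap F_D$, whose ideal sheaf, locally with transverse coordinates, is $(s,t)$ where $s$ cuts out $E_D$ and $t$ cuts out $F_D$; the exceptional divisor $E_1$ satisfies $\cO_{W_1}(-E_1)=(g_1^{-1}(s,t)\mathcal{O})$, so $\cO_{W_1}(-aE_1)$ is generated by the total transforms of $s^a,s^{a-1}t,\dots,t^a$, and pushing forward, $g_{1*}\cO_{W_1}(-aE_1)=\overline{(s,t)^a}\subset(s^a)+(t)=\cO_W(-aE_D)+\cO_W(-F_D)$ (integral closure of $(s,t)^a$, but in a regular local ring with $(s,t)$ part of a regular system of parameters this is just $(s,t)^a$). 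For the inductive step, factor $h_i=h_{i-1}\circ g_i$; push forward the $i$-th statement through $h_{i-1}$ using $g_{i*}\cO_{W_i}(-aE_i)\subset\cO_{W_{i-1}}(-aE_{i-1})+\cO_{W_{i-1}}(-F_{i-1})$ (same local computation at the transverse intersection $E_{i-1}\cap F_{i-1}$), and then apply $h_{i-1*}$ together with the inductive hypothesis $h_{i-1*}\cO_{W_{i-1}}(-aE_{i-1})\subset\cO_W(-aE_D)+\cO_W(-F_D)$ and $h_{i-1*}\cO_{W_{i-1}}(-F_{i-1})\subset\cO_W(-F_D)$ (the strict transform of $F_D$ pushes into $\cO_W(-F_D)$). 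Combining these inclusions gives (\ref{itm:tower_ideal}).

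\emph{Main obstacle.} The part I expect to require the most care is Step 2: one must be sure that at every stage of the tower the centre being blown up is genuinely the transverse intersection of two smooth divisors $E_{i-1}$ and $F_{i-1}$, so that the clean local model $(s,t)$ is available and the pushforward of $\cO(-aE_i)$ is exactly $(s,t)^a$ rather than some larger or smaller ideal. This relies on checking that the snc condition from the log resolution $q$ is preserved under the specific sequence of blow-ups in (\ref{eqn:tower}) — in particular that $F_i$ and $E_i$ still meet transversally and that no unexpected components of the total transform of $F_D$ interfere — and on the commutation of pushforward with the composition $h_{i-1}\circ g_i$, for which properness of all the maps involved is what makes the inductive bookkeeping go through.
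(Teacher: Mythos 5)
Your treatment of (\ref{itm:tower_ld}) follows the paper's intent (the paper dismisses it as ``a computation using $a_{F_D}(Y,\Delta,\fa\cO_Y)\le0$''), but your blow-up formula is off by one: for the exceptional divisor $E$ of the blow-up along the transverse intersection of two smooth divisors $D_1,D_2$ one has $a_E=a_{D_1}+a_{D_2}$, not $a_{D_1}+a_{D_2}-1$ (check on $\bA^2$: $a_E=2=1+1$). With the correct formula the iteration $a_{E_i}=a_{E_{i-1}}+a_{F_{i-1}}$, together with $a_{F_{i-1}}(Y,\Delta,\fa^{1+\varepsilon}\cO_Y)=a_{F_D}(Y,\Delta,\fa^{1+\varepsilon}\cO_Y)\le-\varepsilon\ord_{F_D}\fa$, gives exactly the stated bound, so this is a repairable slip rather than a structural problem.

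The genuine gap is in your inductive step for (\ref{itm:tower_ideal}). After $h_{i*}\cO_{W_i}(-aE_i)\subset h_{i-1*}\bigl(\cO_{W_{i-1}}(-aE_{i-1})+\cO_{W_{i-1}}(-F_{i-1})\bigr)$ you need to distribute $h_{i-1*}$ over the sum of ideal sheaves, i.e.\ $h_{i-1*}(\cI_1+\cI_2)\subset h_{i-1*}\cI_1+h_{i-1*}\cI_2$. This inclusion is false in general: from $0\to\cI_1\cap\cI_2\to\cI_1\oplus\cI_2\to\cI_1+\cI_2\to0$ the cokernel of $h_{i-1*}\cI_1+h_{i-1*}\cI_2\hookrightarrow h_{i-1*}(\cI_1+\cI_2)$ is controlled by $R^1h_{i-1*}(\cI_1\cap\cI_2)=R^1h_{i-1*}\cO_{W_{i-1}}(-aE_{i-1}-F_{i-1})$, and a relative vanishing of this kind is precisely what the paper states is unavailable over $\Spec R$ (this is the whole reason for the detour through $\fa\agl{l}$ and descent to $\bA_k^d$). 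The paper sidesteps the issue, and the induction altogether: since $\cO_W(-aE_D)+\cO_W(-F_D)$ is the preimage of $\cO_{F_D}(-aE_D|_{F_D})$ under $\cO_W\twoheadrightarrow\cO_{F_D}$, it suffices to bound the restriction to $F_D$, and there one has in a single step
\begin{align*}
h_{i*}\cO_{W_i}(-aE_i)\cdot\cO_{F_D}\subset h_{i*}\cO_{F_i}(-aE_i|_{F_i})=\cO_{F_D}(-aE_D|_{F_D}),
\end{align*}
using that $h_i$ restricts to an isomorphism $F_i\simeq F_D$ carrying $E_i\cap F_i$ to $D$. You should replace your inductive bookkeeping with this restriction argument; your local model $(s,t)^a\subset(s^a)+(t)$ is correct for the base case but does not propagate through the pushforwards as written.
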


\begin{proof}
The (\ref{itm:tower_ld}) is just a computation using $a_{F_D}(Y,\Delta,\fa\cO_Y)\le0$. The (\ref{itm:tower_ideal}) is from $h_{i*}\cO_{W_i}(-aE_i)\cdot\cO_{F_D}\subset h_{i*}\cO_{F_i}(-aE_i|_{F_i})=\cO_{F_D}(-aE_D|_{F_D})$ via $F_i\simeq F_D$.
\end{proof}

\begin{lemma}\label{lem:fat}
Suppose that $(Y,\Delta)$ is klt outside $f^{-1}(P)$. Then there exists $l$ such that $\Nklt_W(Y,\Delta,\fa\agl{l}\cO_Y)=F\cap g^{-1}(P)$.
\end{lemma}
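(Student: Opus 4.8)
The plan is to show that for $l\gg0$ the $\bR$-ideal $\fa\agl{l}$ has the same non-subklt locus on $W$ as $\fa^{1+\varepsilon}$ along the central fibre $g^{-1}(P)$, while killing any non-subklt behaviour away from $g^{-1}(P)$. Concretely, I would argue that $\Nklt_W(Y,\Delta,\fa\agl{l}\cO_Y)\subset g^{-1}(P)$ always, since $\fm^l$ is $\fm$-primary and raising the exponent to $r_j(1+\varepsilon)$ (strictly larger than $r_j$) makes the pair klt outside $g^{-1}(P)$ where $(Y,\Delta,\fa\cO_Y)$ is already klt by hypothesis and the $+\fm^l$ summand is invertible away from $P$. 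So it remains to compare $\Nklt_W(Y,\Delta,\fa\agl{l}\cO_Y)$ with $F\cap g^{-1}(P)=\Nklt_W(Y,\Delta,\fa^{1+\varepsilon}\cO_Y)\cap g^{-1}(P)$.

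The containment $\Nklt_W(Y,\Delta,\fa\agl{l}\cO_Y)\subset F\cap g^{-1}(P)$ is straightforward: $\fa_j+\fm^l\subset\cO_W$ weak-transforms so that $\ord_{E}(\fa_j+\fm^l)\le\ord_E\fa_j$ for every prime divisor $E$ on $W$, hence $a_E(Y,\Delta,\fa\agl{l}\cO_Y)\ge a_E(Y,\Delta,\fa^{1+\varepsilon}\cO_Y)$ for all $E$, giving the inclusion together with the previous paragraph. The reverse containment is the substantive point. On the log resolution $W$ the ideal $\fa\cO_W$ is invertible, say $\fa_j\cO_W=\cO_W(-A_j)$, and one has $(\fa_j+\fm^l)\cO_W=\cO_W(-\min(A_j,lM))$ where $\fm\cO_W=\cO_W(-M)$; here I use that $W\to X$ factors through the log resolution of $\fa\fm\cO_Y$, so $\fm\cO_W$ is also invertible with $\Cosupp\subset g^{-1}(P)$. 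For a prime divisor $E$ on $W$ lying over a point of $F\cap g^{-1}(P)$ one has $\ord_E M>0$, so for $l$ large enough $l\ord_E M\ge\ord_E A_j$, i.e. $\min(A_j,lM)=A_j$ along such $E$, and then the log discrepancies of the two pairs agree at $E$. The only difficulty is uniformity: a priori there are infinitely many divisors $E$ to control. This is handled by noting it suffices to control divisors $E$ on $W$ itself (the components of $F$ and of $g^{-1}(P)$, finitely many) together with, for the codimension-two components $D$ of $F\cap g^{-1}(P)$, the divisors $E_i$ produced by the tower of blow-ups (\ref{eqn:tower}); by Lemma \ref{lem:tower}(\ref{itm:tower_ideal}) the behaviour of $\fa\agl{l}$ along $E_i$ is governed by its behaviour along $E_D$ and $F_D$, so finiteness is recovered.

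The main obstacle is precisely this reduction to finitely many divisors: showing that no \emph{new} non-subklt divisor is created by the $+\fm^l$ truncation at a centre not already contained in $F\cap g^{-1}(P)$, and that along the genuine codimension-$\ge2$ strata of $F\cap g^{-1}(P)$ the higher blow-ups $E_i$ do not escape control as $l$ varies. I would dispatch the first half using that on $W$ everything in sight is snc, so a new non-subklt centre would have to come from a stratum, and raising to the power $1+\varepsilon$ with $\Nklt$ stable under that (equation (\ref{eqn:F})) forces such a stratum into $g^{-1}(P)$; the second half is exactly the point of Lemmas \ref{lem:tower}(\ref{itm:tower_ld}) and \ref{lem:tower}(\ref{itm:tower_ideal}), where (\ref{itm:tower_ld}) shows the log discrepancies along the $E_i$ tend to $-\infty$, hence stay negative, independently of $l$, while (\ref{itm:tower_ideal}) bounds $h_{i*}\cO_{W_i}(-aE_i)$ by $\cO_W(-aE_D)+\cO_W(-F_D)$, reducing the membership condition for $\fa\agl{l}$ at $E_i$ to one at the two divisors $E_D,F_D$ on $W$. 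Combining these, one picks $l$ larger than the finitely many thresholds arising from each codimension-two component $D$ and each divisorial component of $F$ and $g^{-1}(P)$; for that $l$ the equality $\Nklt_W(Y,\Delta,\fa\agl{l}\cO_Y)=F\cap g^{-1}(P)$ holds.
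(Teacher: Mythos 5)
Your overall strategy is the paper's: the inclusion $\Nklt_W(Y,\Delta,\fa\agl{l}\cO_Y)\subset F\cap g^{-1}(P)$ for every $l$ (because $\fa_j+\fm^l$ is trivial outside $P$ while $(Y,\Delta)$ is klt there, and $\ord_E(\fa_j+\fm^l)\le\ord_E\fa_j$ combined with (\ref{eqn:F}) gives containment in $F$), and, for the reverse inclusion, the case division by $\codim_WD$ with the tower (\ref{eqn:tower}) and Lemma \ref{lem:tower}(\ref{itm:tower_ld}) handling the codimension-$2$ components. (A small slip in your first paragraph: the hypothesis is that $(Y,\Delta)$, not $(Y,\Delta,\fa\cO_Y)$, is klt outside $f^{-1}(P)$ --- indeed $F$ may well have components off $g^{-1}(P)$; what saves the claim is precisely that $\fa\agl{l}$ is trivial there.)

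The one step that is genuinely misargued is where you ``recover finiteness'' via Lemma \ref{lem:tower}(\ref{itm:tower_ideal}). First, there is no uniformity problem over infinitely many divisors: to exhibit a codimension-$2$ component $D$ as a non-subklt centre of $(Y,\Delta,\fa\agl{l}\cO_Y)$ you need only \emph{one} divisor with centre $D$ and non-positive log discrepancy. The correct order of quantifiers is to fix $i=i_D$ first, using (\ref{itm:tower_ld}) and $\ord_{F_D}\fa>0$, so that $a_{E_{i_D}}(Y,\Delta,\fa^{1+\varepsilon}\cO_Y)\le0$, and only then choose $l_D$ with $l_D\ord_{E_{i_D}}\fm\ge\ord_{E_{i_D}}\fa_j$ for all $j$; this forces $\ord_{E_{i_D}}(\fa_j+\fm^l)=\ord_{E_{i_D}}\fa_j$ and hence $a_{E_{i_D}}(Y,\Delta,\fa\agl{l}\cO_Y)=a_{E_{i_D}}(Y,\Delta,\fa^{1+\varepsilon}\cO_Y)\le0$ for $l\ge l_D$ --- exactly the computation in your second paragraph, applied to this single divisor. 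Second, part (\ref{itm:tower_ideal}) cannot substitute for this: the inclusion $h_{i*}\cO_{W_i}(-aE_i)\subset\cO_W(-aE_D)+\cO_W(-F_D)$ only constrains which functions \emph{can} have large order along $E_i$, i.e.\ it bounds $\ord_{E_i}$ of an ideal from above in terms of $E_D$ and $F_D$; it cannot verify the lower bound $\ord_{E_i}\fm^l\ge\ord_{E_i}\fa_j$ that the argument needs. (That part of the lemma is used later, in Lemma \ref{lem:approx_van}, not here.) With the quantifiers reordered as above and $l\ge\max_Dl_D$ taken over the finitely many components, your proof coincides with the paper's.
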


\begin{proof}
By (\ref{eqn:F}), (\ref{eqn:al}) and the assumption, $\Nklt_W(Y,\Delta,\fa\agl{l}\cO_Y)\subset F\cap g^{-1}(P)$ for any $l$. Thus it suffices to prove that for every irreducible component $D$ of $F\cap g^{-1}(P)$, there exists $l_D$ such that $D$ is a non-subklt centre on $W$ of $(Y,\Delta,\fa\agl{l}\cO_Y)$ for any $l\ge l_D$. If $\codim_WD=1$, then we may take any $l_D$ such that $l_D\ord_D\fm\ge\ord_D\fa_j$ for all $j$. If $\codim_WD=2$, then $\ord_{F_D}\fa>0$ and we take the tower of blow-ups in (\ref{eqn:tower}). By Lemma \ref{lem:tower}(\ref{itm:tower_ld}), we have $a_{E_i}(Y,\Delta,\fa^{1+\varepsilon}\cO_Y)\le0$ whenever $a_{E_D}(Y,\Delta,\fa^{1+\varepsilon}\cO_Y)\le i\varepsilon\ord_{F_D}\fa$. Fix such $i$ and take $l_D$ such that $l_D\ord_{E_i}\fm\ge\ord_{E_i}\fa_j$ for all $j$. Then for $l\ge l_D$, $a_{E_i}(Y,\Delta,\fa\agl{l}\cO_Y)=a_{E_i}(Y,\Delta,\fa^{1+\varepsilon}\cO_Y)\le0$, so $D=c_W(E_i)$ is a non-subklt centre on $W$ of $(Y,\Delta,\fa\agl{l}\cO_Y)$.
\end{proof}

\begin{proof}[Proof of Theorem \textup{\ref{thm:connect}}]
After the reduction in Lemma \ref{lem:connect}(\ref{itm:connect_reduction}), we take $l$ in Lemma \ref{lem:fat}. Then $\Nklt_Y(Y,\Delta,\fa\cO_Y)\cap f^{-1}(P)=q(F\cap g^{-1}(P))=q(\Nklt_W(Y,\Delta,\fa\agl{l}\cO_Y))=\Nklt_Y(Y,\Delta,\fa\agl{l}\cO_Y)$. Apply Lemma \ref{lem:connect}(\ref{itm:connect_primary}) to $(Y,\Delta,\fa\agl{l}\cO_Y)$.
\end{proof}

In our proof of Theorem \ref{thm:connect}, we do not know a relative vanishing for $g\colon W\to X$. Instead, we consider a log resolution $f_l\colon Y_l\to X$ of $(X,\fa\agl{l}\fm)$ which factors through $f$, and let $p_l\colon Y_l\to Y$ be the induced morphism. The $l$ is not fixed here. The $f_l$ is isomorphic outside $P$. Let $(Y_l,\Delta_l,\cO_{Y_l})$ be the pull-back of $(X,0,\fa\agl{l})$. Then we have a vanishing involving $\Delta_l$.

\begin{lemma}\label{lem:vanishing}
Let $f_l=f\circ p_l\colon Y_l\to Y\to X$ be as above. Write $\ru{-\Delta_l}=P_l-N_l$ by effective divisors $P_l$ and $N_l$ with no common divisors. Then
\begin{align*}
R^1f_*(p_{l*}\cO_{Y_l}(-N_l))=0.
\end{align*}
\end{lemma}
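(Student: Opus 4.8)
The statement to prove is a relative Kanshot–Viehweg/Kawamata–Viehweg-style vanishing $R^1f_*(p_{l*}\cO_{Y_l}(-N_l))=0$ for the composition $f_l=f\circ p_l\colon Y_l\to Y\to X$. Since $(Y_l,\Delta_l,\cO_{Y_l})$ is the pull-back of $(X,0,\fa\agl{l})$ and $X=\Spec R$ is non-singular, we have $K_{Y_l}=K_{Y_l/X}$ and $\Delta_l=-K_{Y_l/X}+\sum_{ij}(r_j\ord_{E_i}(\fa_j+\fm^l))E_i$, so the divisor $\ru{-\Delta_l}=K_{Y_l/X}+\ru{-\sum_{ij}r_j(1+\varepsilon)(\ord_{E_i}(\fa_j+\fm^l))E_i}$. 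The plan is to reorganise $\ru{-\Delta_l}=P_l-N_l$ so that $-N_l=K_{Y_l/X}+A_l$ for some $f_l$-nef and $f_l$-big $\bR$-divisor $A_l$ up to an snc boundary whose round-down is accounted for by $P_l$; then the desired vanishing follows from the relative Kawamata–Viehweg vanishing over the non-singular $R$-variety $X$ (available via Temkin's resolution in this setting, granting the relative vanishing theorem for $R$-varieties of this type that is implicitly used throughout the paper), pushed forward in two stages via the Leray spectral sequence for $f_l=f\circ p_l$.

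First I would write, on $Y_l$, the identity $-N_l\le \ru{-\Delta_l}$ and express $\ru{-\Delta_l}+\{-\Delta_l\} = -\Delta_l = K_{Y_l/X}+\Gamma_l$ where $\Gamma_l:=\sum_{ij}r_j(1+\varepsilon)(\ord_{E_i}(\fa_j+\fm^l))E_i$ is an effective $\bR$-divisor supported on an snc divisor (by construction of the log resolution $f_l$). Since $\fa\agl{l}$ is $\fm$-primary, $\fa\agl{l}\cO_{Y_l}=\cO_{Y_l}(-\Gamma_l^{\mathrm{round}})$ is $f_l$-ample-ish: more precisely $f_l$ being a log resolution of $(X,\fa\agl{l}\fm)$ that factors through $f$, the divisor $-\Gamma_l$ (or a suitable modification) is $f_l$-nef, and because $\fa\agl{l}$ is $\fm$-primary its co-support dominates $P$, giving $f_l$-bigness on the central fibre. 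Then $-N_l = K_{Y_l/X} + (\Gamma_l - P_l) + \{-\Delta_l\}$... — rather, I would split $\ru{-\Delta_l}=K_{Y_l/X}+\ru{\Gamma_l-?}$ carefully so that $-N_l-K_{Y_l/X}$ equals an $f_l$-nef $\bR$-divisor plus the negative fractional part of an snc $\bR$-divisor, which is exactly the input for Kawamata–Viehweg.

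Next, applying relative Kawamata–Viehweg vanishing for the non-singular $R$-variety $X$ to $f_l\colon Y_l\to X$ with this decomposition yields $R^{>0}(f_l)_*\cO_{Y_l}(-N_l)=0$, in particular $R^1(f_l)_*\cO_{Y_l}(-N_l)=0$ and $R^1(p_l)_*\cO_{Y_l}(-N_l)=0$ (the latter by applying the same vanishing relatively over $Y$, which is also non-singular, using that $-N_l-K_{Y_l/Y}$ is $p_l$-nef and $p_l$-big since $f_l$ factors through $f$ and is a log resolution). The Leray spectral sequence $R^p f_*\,R^q(p_l)_*\cO_{Y_l}(-N_l)\Rightarrow R^{p+q}(f_l)_*\cO_{Y_l}(-N_l)$ then gives the five-term exact sequence $0\to R^1f_*\big((p_l)_*\cO_{Y_l}(-N_l)\big)\to R^1(f_l)_*\cO_{Y_l}(-N_l)\to f_*\,R^1(p_l)_*\cO_{Y_l}(-N_l)$, whose outer terms both vanish, forcing $R^1f_*\big((p_l)_*\cO_{Y_l}(-N_l)\big)=0$, as claimed.

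The main obstacle I anticipate is the bookkeeping in the first two paragraphs: verifying that after writing $\ru{-\Delta_l}=P_l-N_l$ with $P_l,N_l$ effective without common components, the divisor $-N_l-K_{Y_l/X}$ genuinely has the form ($f_l$-nef and $f_l$-big $\bR$-divisor) $+$ ($-\{$snc $\bR$-divisor$\}$) — the fractional parts coming from the rounding must be matched up correctly so that $P_l$ absorbs exactly the right components, and one must check $f_l$-bigness (not merely $f_l$-nefness) of the relevant $\bR$-divisor, which is where the $\fm$-primarity of $\fa\agl{l}$ and the factorisation of $f_l$ through a log resolution of $(X,\fa\agl{l}\fm)$ are essential. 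The two-step push-forward via Leray and the vanishing of $R^1(p_l)_*$ are comparatively routine once the first step is set up, though one should confirm that the underlying relative Kawamata–Viehweg vanishing is legitimately available for birational morphisms of non-singular $R$-varieties in the sense used here.
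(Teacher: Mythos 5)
There is a genuine gap, in fact two. First, your argument rests on a ``relative Kawamata--Viehweg vanishing for birational morphisms of non-singular $R$-varieties'' which you assert is ``available via Temkin's resolution'' and ``implicitly used throughout the paper.'' It is not: this is essentially Conjecture 1.1 of the paper, which the author explicitly states is open (``annoyingly unknown besides the work on surfaces by Lipman'') and which the whole construction is designed to circumvent. The paper's actual proof never invokes a vanishing theorem over $\Spec R$. Instead it reduces to $X=\Spec R$ with $K=k$ by faithfully flat base change, then uses that $\fa\agl{l}$ is $\fm$-primary to descend $f_l$ to a projective morphism $\bar f_l\colon\bar Y_l\to\bA_k^d$ via Proposition \ref{prp:descendible}, applies the classical Kawamata--Viehweg theorem over the \emph{variety} $\bA_k^d$ to get $R^1\bar f_{l*}\cO_{\bar Y_l}(\ru{-\bar\Delta_l})=0$ and $R^1\bar f_{l*}\cO_{\bar Y_l}=0$, and transports these back along the flat morphism $X\to\bA_k^d$. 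Omitting this descent leaves you with no legitimate vanishing theorem to cite.

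Second, even granting a vanishing theorem, your plan to apply it directly to $\cO_{Y_l}(-N_l)$ does not work, and the obstacle you flag as ``bookkeeping'' is not resolvable by bookkeeping. One has $-\Delta_l=K_{Y_l/X}+G_l$ with $G_l$ the $f_l$-nef $\bR$-divisor defined by $(\fa\agl{l}\cO_{Y_l})^{-1}$ up to sign, so Kawamata--Viehweg applies to $\ru{-\Delta_l}=K_{Y_l/X}+G_l+\{\Delta_l\}$; but $-N_l=\ru{-\Delta_l}-P_l$, and subtracting the effective integral divisor $P_l$ (which typically contains exceptional components of positive discrepancy) destroys the relative nefness of $-N_l-K_{Y_l/X}$. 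There is no decomposition of $-N_l$ of the form you seek. The paper bridges this with a separate device: the vanishing $R^1f_{l*}\cO_{Y_l}(\ru{-\Delta_l})=0$ applied to the sequence $0\to\cO_{Y_l}(P_l-N_l)\to\cO_{Y_l}(P_l)\to\cO_{N_l}(P_l|_{N_l})\to0$ gives a surjection $\cO_X=f_{l*}\cO_{Y_l}(P_l)\twoheadrightarrow f_{l*}\cO_{N_l}(P_l|_{N_l})$, which factors through $f_{l*}\cO_{N_l}$ and hence forces $\cO_X\twoheadrightarrow f_{l*}\cO_{N_l}$; combining this with $R^1f_{l*}\cO_{Y_l}=0$ in the sequence $0\to\cO_{Y_l}(-N_l)\to\cO_{Y_l}\to\cO_{N_l}\to0$ yields $R^1f_{l*}\cO_{Y_l}(-N_l)=0$. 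Your final Leray step (the injection $R^1f_*(p_{l*}\cO_{Y_l}(-N_l))\hookrightarrow R^1f_{l*}\cO_{Y_l}(-N_l)$ from the low-degree exact sequence) is correct and matches the paper; the claimed vanishing of $R^1p_{l*}\cO_{Y_l}(-N_l)$ is not needed for it.
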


\begin{proof}
The sheaf $R^1f_*(p_{l*}\cO_{Y_l}(-N_l))$ is supported in $P$. Set $\widehat{\cO_{X,P}}\simeq K'[[x_1,\ldots,x_{d'}]]$ and $R'=k[[x_1,\ldots,x_{d'}]]$ for the algebraic closure $k$ of $K'$, then $R'$ is faithfully flat over $\cO_{X,P}$. Hence taking the base change to $\Spec R'$, one can reduce to the case $X=\Spec R$ with $K=k$ by \cite[Proposition III.1.4.15]{EGA} and Corollary \ref{cor:extension}. By Proposition \ref{prp:descendible}, $f_l$ is the base change of a projective morphism $\bar{f}_l\colon\bar{Y}_l\to\bA_k^d$. The $\fa\agl{l}$ is the pull-back of an $\bR$-ideal $\bar\fa\agl{l}$ on $\bA_k^d$, and $\Delta_l$ is the base change of the $\bR$-divisor $\bar\Delta_l$ on $\bar{Y}_l$ such that $(\bar{Y}_l,\bar\Delta_l,\cO_{\bar{Y}_l})$ is the pull-back of $(\bA_k^d,0,\bar\fa\agl{l})$.

Kawamata--Viehweg vanishing theorem \cite{Km82}, \cite{Vi82} implies $R^1\bar{f}_{l*}\cO_{\bar{Y}_l}(\ru{-\bar\Delta_l})=0$. Since $X\to\bA_k^d$ is flat, this is base-changed to $R^1f_{l*}\cO_{Y_l}(\ru{-\Delta_l})=0$ by \cite[Proposition III.1.4.15]{EGA}. Thus, applying $f_{l*}$ to the exact sequence
\begin{align*}
0\to\cO_{Y_l}(P_l-N_l)\to\cO_{Y_l}(P_l)\to\cO_{N_l}(P_l|_{N_l})\to0,
\end{align*}
we obtain the surjection $\cO_X=f_{l*}\cO_{Y_l}(P_l)\twoheadrightarrow f_{l*}\cO_{N_l}(P_l|_{N_l})$. This homomorphism is factored as $\cO_X\to f_{l*}\cO_{N_l}\hookrightarrow f_{l*}\cO_{N_l}(P_l|_{N_l})$, so we have the surjection $\cO_X\twoheadrightarrow f_{l*}\cO_{N_l}$. Moreover, we have the base change $R^1f_{l*}\cO_{Y_l}=0$ of the vanishing $R^1\bar{f}_{l*}\cO_{\bar{Y}_l}=0$. Hence applying $f_{l*}$ to the exact sequence
\begin{align*}
0\to\cO_{Y_l}(-N_l)\to\cO_{Y_l}\to\cO_{N_l}\to0,
\end{align*}
we obtain $R^1f_{l*}\cO_{Y_l}(-N_l)=0$.

Leray spectral sequence $R^pf_*(R^qp_{l*}\cO_{Y_l}(-N_l))\Rightarrow R^{p+q}f_{l*}\cO_{Y_l}(-N_l)$ gives an injection $R^1f_*(p_{l*}\cO_{Y_l}(-N_l))\hookrightarrow R^1f_{l*}\cO_{Y_l}(-N_l)$, so $R^1f_*(p_{l*}\cO_{Y_l}(-N_l))=0$.
\end{proof}

\subsection{Propositions in an arbitrary dimension}
We prepare two auxiliary propositions.

It is easy to see that a minimal lc centre of codimension $1$ is normal.

\begin{proposition}\label{prp:centre}
Let $(X,\fa)$ be a pair on a non-singular $R$-variety $X$, and $S$ the union of all non-klt centres of codimension $1$ of $(X,\fa)$. Then every irreducible component of the non-normal locus of $S$ is a non-klt centre of $(X,\fa)$.
\end{proposition}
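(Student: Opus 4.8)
The plan is to reduce to a local question on $S$ and its normalization, and then extract a non-klt centre from the non-normal locus by a residue/adjunction argument analogous to the inductive step in the connectedness lemma. First I would work in the setting $X=\Spec R$ with $K=k$, which is harmless by Corollary \ref{cor:extension} (base change to $\Spec\widehat{\cO_{X,P}}$ and then to the algebraic closure commute with taking non-klt centres). Write $\fa=\prod_j\fa_j^{r_j}$ and let $S=\sum_\alpha S_\alpha$ be the union of the non-klt centres of codimension $1$; each $S_\alpha$ is a prime divisor with $\ord_{S_\alpha}\fa\ge1$, i.e.\ $(X,S)$ is (generically along $S_\alpha$) the relevant boundary. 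The key point is that near the generic point of a codimension-$1$ centre $X$ is regular, so $S_\alpha$ is the image of a divisor $E$ over $X$ with $a_E(X,\fa)\le0$, and in fact we may take $E=S_\alpha$ itself on $X$ after a log resolution.

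Next I would take a log resolution $g\colon W\to X$ of $(X,\fa)$ and consider the strict transform $\widetilde S=\sum_\alpha\widetilde S_\alpha$ together with the reduced boundary $\Gamma$ supporting all prime divisors $G$ over $X$ with $a_G(X,\fa)\le 0$; since $(X,\fa)$ is lc, $(W,\Gamma+\{\text{fractional part}\})$ is sublc, and $\Gamma$ is snc. The non-normal locus of $S_\alpha$ is dominated by the locus on $\widetilde S_\alpha$ where two branches of $\widetilde S_\alpha$ meet, equivalently by strata $\widetilde S_\alpha\cap G$ for prime divisors $G\subset\Gamma$ with $g(G)\subset S_\alpha$ mapping finitely onto a codimension-$1$ subset of $S_\alpha$ (either $G=\widetilde S_\beta$ for another component, or $G$ exceptional with $c_X(G)$ of codimension $2$ contained in $S_\alpha$). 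For such a $G$, the intersection $\widetilde S_\alpha\cap G$ is a stratum of the snc boundary $\Gamma$ with $a_{\widetilde S_\alpha}=a_G=0$ along it (log canonicity forces the coefficients of both $\widetilde S_\alpha$ and $G$ in $\Gamma$ to be exactly $1$ in a neighbourhood, otherwise blowing up the intersection produces a divisor of negative log discrepancy). Running a standard blow-up of $\widetilde S_\alpha\cap G$ (as in the tower \eqref{eqn:tower}, using Lemma \ref{lem:tower}) produces an exceptional divisor $G'$ with $a_{G'}(X,\fa)\le 0$ and $c_X(G')$ the component of the non-normal locus of $S_\alpha$ we started with; hence that component is a non-klt centre.

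The main obstacle I expect is bookkeeping the local structure of $S$ near its non-normal locus: one must check that every codimension-$1$ irreducible component $Z$ of the non-normal locus of $S$ genuinely arises, after a log resolution, as a stratum $\widetilde S_\alpha\cap G$ with $G$ a log canonical place, rather than from some subtler non-normality of $S$ (e.g.\ a cuspidal rather than nodal generic behaviour along $Z$). This is handled by passing to the generic point $\eta_Z$ of such a component and invoking that $\mathcal{O}_{X,\eta_Z}$ is regular of dimension $2$, so $S$ near $\eta_Z$ is a plane curve times a regular base; its non-normality at $\eta_Z$ means the complete local ring of $S$ there is not integrally closed, which — combined with log canonicity of $(X,\fa)$ and hence of $(X,S+\ldots)$ — forces $\mathrm{mult}_{\eta_Z}S\ge 2$ along $Z$ and produces the desired log canonical place over $\eta_Z$ with centre $Z$. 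Once $Z$ is identified as $c_X(G')$ for a divisor $G'$ with $a_{G'}(X,\fa)\le 0$, the conclusion that $Z$ is a non-klt centre of $(X,\fa)$ is immediate from the definition.
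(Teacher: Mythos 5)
Your third paragraph contains the germ of the paper's actual proof, but the main body of your argument (the log-resolution/strata machinery of the second paragraph) is circular, and the decisive computation is never carried out. Concretely: you claim that each codimension-one component of the non-normal locus of $S_\alpha$ is dominated by a stratum $\widetilde S_\alpha\cap G$ with $G\subset\Gamma$, i.e.\ with $a_G(X,\fa)\le0$. When the non-normality comes from two distinct components $S_\alpha,S_\beta$ meeting in codimension one, such a $G=\widetilde S_\beta$ exists for free; but when it comes from a single component $S_\alpha$ being singular in codimension one (the ``cuspidal'' case you yourself flag), the existence of an exceptional $G$ with $a_G(X,\fa)\le0$ and $c_X(G)$ equal to that component is exactly the statement to be proven, so it cannot be fed into the construction as a hypothesis. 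Moreover the proposition does not assume $(X,\fa)$ is lc, and your repeated appeals to log canonicity (to force boundary coefficients equal to $1$, and later to ``force $\mult_{\eta_Z}S\ge2$'') are both unavailable and unnecessary: the bound $\mult_{\eta_Z}S\ge2$ follows purely from non-normality, since $S$ is a hypersurface in a regular scheme, hence Cohen--Macaulay and $S_2$, so by Serre's criterion every component $C$ of its non-normal locus has $\codim_XC=2$ and $\cO_{S,\eta_C}$ is a non-regular one-dimensional local ring.

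Once that is in place, no resolution is needed at all; the paper finishes in one line, and this is the step you gesture at but do not complete. Let $E$ be the divisor obtained at $\eta_C$ by blowing up $X$ along $C$, so $a_E(X)=2$. Since every component $S_\alpha$ of $S$ is a non-klt centre of codimension one, $\ord_{S_\alpha}\fa\ge1$, and therefore
\begin{align*}
\ord_E\fa\;\ge\;\sum_\alpha(\ord_{S_\alpha}\fa)\cdot\mult_{\eta_C}S_\alpha\;\ge\;\sum_\alpha\mult_{\eta_C}S_\alpha\;=\;\mult_{\eta_C}S\;\ge\;2,
\end{align*}
whence $a_E(X,\fa)=2-\ord_E\fa\le0$ and $C=c_X(E)$ is a non-klt centre. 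The inequality $\ord_E\fa\ge\mult_{\eta_C}S$ is the crux of the proof and is absent from your write-up; without it, identifying $Z$ as the centre of a divisor of non-positive log discrepancy remains unjustified.
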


\begin{proof}
Since $S$ is Cohen--Macaulay, an irreducible component $C$ of the non-normal locus of $S$ has $\codim_XC=2$ and $\mult_{\eta_C}S\ge2$. Let $E$ be the divisor over $X$ obtained at $\eta_C$ by the blow-up of $X$ along $C$. Then $a_E(X,\fa)=2-\ord_E\fa\le2-\mult_{\eta_C}S\le0$, so $C=c_X(E)$ is a non-klt centre of $(X,\fa)$.
\end{proof}

We can perturb $\fa$ to reduce to the case when every lc centre is minimal.

\begin{proposition}\label{prp:perturb}
Let $(X,\fa)$ be an lc pair on a klt $R$-variety $X$. Then there exists an $\bR$-ideal $\fa'$ forming an lc pair $(X,\fa')$ such that a minimal lc centre of $(X,\fa)$ is an lc centre of $(X,\fa')$ and vice versa.
\end{proposition}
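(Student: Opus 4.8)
The plan is to perturb $\fa$ so as to kill every non-minimal lc centre while preserving the minimal ones, using a standard tie-breaking / Bertini-type argument. First I would take a log resolution $f\colon Y\to X$ of $(X,\fa)$ and write the pull-back $(Y,\Delta_Y,\fa_Y)$; since $\fa_Y$ is invertible and $X$ is klt, the sublc divisor $\Delta_Y$ has simple normal crossing support, and the lc places are exactly the components $E$ of $\lceil-\Delta_Y\rceil_{\le 0}$ appearing with coefficient $1$, i.e.\ with $a_E(X,\fa)=0$. The minimal lc centres of $(X,\fa)$ correspond to the minimal (with respect to inclusion of images) such centres $c_X(E)$; by an argument as in Proposition \ref{prp:centre} and the connectedness results above, after shrinking around $P$ one may further assume there is a unique minimal lc centre $C$ and list the lc places $E_1,\dots,E_m$ with $c_X(E_i)\supsetneq C$ strictly (the "bad" places) together with the lc places over $C$ itself.

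The key step is then the perturbation. I would pick a general member $H$ of a sufficiently positive linear system of an $\fm$-primary ideal $\fb$ (or a general $\bR$-ideal supported away from the generic point of $C$ but passing through the generic points of all bad centres $c_X(E_i)$) and set $\fa' := \fa^{1-\delta}\cdot\fb^{t}$ for suitable small $\delta>0$ and $t>0$ chosen by the usual tie-breaking recipe: $\delta$ small enough that no new non-klt places are created and that all lc places over $C$ survive with strictly positive discrepancy gained only from the $E_i$-contribution being cancelled, and $t$ chosen so that $a_{E_i}(X,\fa')=0$ is forced back to $0$ exactly for the places we want to keep (those over $C$) while $a_{E_i}(X,\fa')>0$ for every bad place. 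Concretely one solves, for each lc place $E$, the inequality $a_E(X,\fa^{1-\delta}\fb^t) = (1-\delta)a_E(X,\fa) + \delta - t\,\ord_E\fb \ge 0$ with equality precisely on the places over $C$; since $a_E(X,\fa)=0$ on all lc places, this reduces to choosing $t$ with $t\,\ord_E\fb = \delta$ on places over $C$ and $t\,\ord_E\fb < \delta$ on the bad places, which is possible because $\fb$ can be taken to vanish more along the (lower-dimensional, more-blown-up) bad centres than along $C$ — this is where generality of $H$ and the snc structure on $Y$ are used. For non-lc places $E$ with $a_E(X,\fa)>0$ one picks $\delta$ small relative to $\min a_E$ over the finitely many relevant places to keep $a_E(X,\fa')>0$.

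The main obstacle I expect is the bookkeeping needed to guarantee simultaneously that (a) no \emph{new} lc or non-lc behaviour is introduced by $\fb^t$ — i.e.\ that $\Cosupp\fb$ meets $Y$ only along (strict transforms of) the bad centres and along an snc-compatible divisor, so that generality of $H$ really does avoid creating fresh non-subklt places — and (b) the single constant $t$ works uniformly for all the (finitely many) lc places over $C$, not just one. Point (b) is the genuine tie-breaking difficulty: if two lc places over $C$ have different orders along $\fb$, a single $t$ cannot leave both at discrepancy $0$. The fix is to choose $\fb$ so that $\ord_E\fb$ is \emph{constant} on all lc places $E$ with $c_X(E)\subset C$ — e.g.\ take $\fb$ to be (a power of) the ideal of $C$ itself, or pulled back from $C$, so that its order along any such $E$ equals $\ord_E\cI_C$ times a fixed quantity, or more robustly take $\fb = \fm_C^N$ with $\fm_C$ the ideal of $C$ and absorb the discrepancy differences into a slightly non-uniform but still positive choice; working over the germ $P\in X$ and using that $X$ is klt (hence the relevant cohomology/Bertini statements hold) makes this achievable. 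Finally, after constructing $\fa'$, I would check directly from the discrepancy formula that the lc places of $(X,\fa')$ are exactly those over $C$, hence every lc centre of $(X,\fa')$ contains $C$ and is in fact equal to $C$ when minimal, giving the claimed equivalence; the reverse inclusion (an lc centre of $(X,\fa')$ minimal $\Rightarrow$ it is the minimal lc centre of $(X,\fa)$) is then immediate since $C$ is the only candidate.
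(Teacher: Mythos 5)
Your strategy — multiplicative tie-breaking $\fa'=\fa^{1-\delta}\fb^{t}$ with a Bertini-type choice of $\fb$ — is genuinely different from the paper's, which uses a purely additive perturbation: with $Z=\bigcup_iZ_i$ the union of the minimal lc centres and $E_i\in\cD_X$ computing $\mld_{\eta_{Z_i}}(X,\fa)=0$, one picks $l$ with $l\ord_{E_i}\cI_Z\ge\ord_{E_i}\fa_j$ and sets $\fa'=\prod_j(\fa_j+\cI_Z^l)^{r_j}$. Since each $\fa_j$ is only enlarged, every log discrepancy can only increase, so $(X,\fa')$ is lc for free and every lc centre of $(X,\fa')$ is an lc centre of $(X,\fa)$ contained in $\Cosupp\fa'\subseteq Z$, hence equals some $Z_i$ by minimality; the choice of $l$ keeps $a_{E_i}(X,\fa')=a_{E_i}(X,\fa)=0$. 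This sidesteps every balancing issue that the multiplicative recipe creates.

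As written, your argument has genuine gaps. First, reducing to a unique minimal lc centre ``by the connectedness results above'' is circular: Proposition \ref{prp:perturb} is precisely the input the paper needs before it can apply Corollary \ref{cor:connect} to deduce uniqueness, and the proposition is stated for klt $R$-varieties in any dimension, where no uniqueness is available. Your proof must therefore handle several minimal centres $Z_1,\dots,Z_r$ at once, and a single parameter $t$ cannot in general hold one place over each $Z_i$ at discrepancy exactly $0$ simultaneously — this is where the multiplicative approach actually breaks, not merely a bookkeeping nuisance. Second, the discrepancy formula should be $a_E(X,\fa^{1-\delta}\fb^t)=a_E(X,\fa)+\delta\ord_E\fa-t\ord_E\fb$ (equivalently $(1-\delta)a_E(X,\fa)+\delta a_E(X)-t\ord_E\fb$, and $a_E(X)\ne1$ in general), and your justification of the key inequality runs backwards: the ``bad'' centres strictly \emph{contain} $C$, so they are larger, not ``lower-dimensional'', and taking $\fb$ to vanish \emph{more} along them would push their places negative rather than positive. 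What is needed is $\ord_E\fb=0$ on the bad places, i.e.\ $\fb$ cosupported inside $Z$. Third, requiring $a_E(X,\fa')=0$ for \emph{all} lc places over $C$ is neither necessary nor generally achievable; the real obligations are that some place with centre each $Z_i$ stays at $0$, that nothing goes negative, and that no divisor with $a_E(X,\fa)>0$ but large $\ord_E\fb$ becomes a new non-klt place with centre strictly inside $Z$ — a point your proposal does not address and which forces a threshold-type choice of $t$ on a fixed log resolution of $(X,\fa\fb)$. All of these difficulties evaporate once the perturbation is made additive.
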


\begin{proof}
Let $\{Z_i\}_i$ be the set of all minimal lc centres of $(X,\fa=\prod_j\fa_j^{r_j})$. For each $Z_i$, fix $E_i\in\cD_X$ computing $\mld_{\eta_{Z_i}}(X,\fa)=0$. Let $\cI_Z$ be the ideal sheaf of $Z=\bigcup_iZ_i$, and take an integer $l$ such that $l\ord_{E_i}\cI_Z\ge\ord_{E_i}\fa_j$ for all $i$, $j$. Then $(X,\fa':=\prod_j(\fa_j+\cI_Z^l)^{r_j})$ is lc, and $Z_i$ is an lc centre of $(X,\fa')$ by $\ord_{E_i}\fa'=\ord_{E_i}\fa$. On the other hand, every lc centre of $(X,\fa')$ is an lc centre of $(X,\fa)$ contained in $\Cosupp\fa'=Z$, so it equals some $Z_i$.
\end{proof}

\subsection{The smallest lc centre on a threefold}
We proceed to the proof of Theorem \ref{thm:centre}. We may assume that $P$ is not an lc centre of $(X,\fa)$. By Proposition \ref{prp:perturb}, we may assume that every lc centre of $(X,\fa)$ is minimal.

The existence of the smallest lc centre is a consequence of Corollary \ref{cor:connect}.

\begin{proof}[Proof of the existence of the smallest lc centre]
Let $\{Z_i\}_i$ be the set of all lc centres of $(X,\fa)$, which are assumed to be minimal. Proposition \ref{prp:centre} implies that $Z=\bigcup_iZ_i$ is non-singular outside $P$. Thus we have an embedded resolution $f\colon Y\to X$ of singularities of $Z$, in which $f$ is isomorphic outside $P$ and induces $f_Z\colon\bigsqcup_iZ_{iY}\to Z$ for the strict transform $Z_{iY}$ of $Z_i$. By Corollary \ref{cor:connect}, $f_Z^{-1}(P)=\Nklt_Y(X,\fa)\cap f^{-1}(P)$ is connected, that is, there exists only one lc centre of $(X,\fa)$.
\end{proof}

\begin{remark}\label{rmk:point}
The above proof shows that if $Z$ is the smallest lc centre of $(X,\fa)$, then its normalisation $Z^\nu\to Z$ is a homeomorphism.
\end{remark}

To complete Theorem \ref{thm:centre}, we must prove that the unique lc centre of $(X,\fa)$ is normal. If it is a surface, then it is normal by Proposition \ref{prp:centre}. Thus, we may assume that $(X,\fa)$ has the unique lc centre $C$ which is a curve. We have an embedded resolution $f\colon Y\to X$ of singularities of $C$, in which $f$ is isomorphic outside $P$ and induces the normalisation $f_C\colon C_Y\to C$ for the strict transform $C_Y$ of $C$. Note that $f_C^{-1}(P)$ consists of one point, say $P_Y$, by Remark \ref{rmk:point}. We let $\fn$ denote the maximal ideal sheaf of $P_Y\in Y$. Then we take a log resolution $q\colon W\to Y$ of $(Y,\fa\fm\cO_Y\cdot\fn)$ and set the composition $g=f\circ q\colon W\to X$.

We fix $\varepsilon$ in (\ref{eqn:F}) for $\Delta=-K_{Y/X}$, that is, $F=\Nklt_W(X,\fa)=\Nklt_W(X,\fa^{1+\varepsilon})$. For the $\fa\agl{l}$ in (\ref{eqn:al}), we consider a log resolution $f_l\colon Y_l\to X$ of $(X,\fa\agl{l}\fm)$ which factors through $f$ as $f_l=f\circ p_l$. We extend Lemma \ref{lem:vanishing}.

\begin{lemma}\label{lem:refine}
Let $f$ and $f_l=f\circ p_l$ be as above. Then for an arbitrary ideal sheaf $\cI$ on $Y$ containing $p_{l*}\cO_{Y_l}(-N_l)$, with $N_l$ in Lemma \textup{\ref{lem:vanishing}}, one has $R^1f_*\cI=0$.
\end{lemma}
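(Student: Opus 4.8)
Write $\fb:=p_{l*}\cO_{Y_l}(-N_l)$; since $N_l\ge 0$ this is a coherent ideal sheaf on $Y$, so $\fb\subset\cI\subset\cO_Y$. The plan is to localise the whole statement at the single point $P_Y=C_Y\cap f^{-1}(P)$, the decisive point being that $\cO_Y/\fb$ is supported only at $P_Y$. Granting this, $\cO_Y/\cI$ too is of finite length supported at $P_Y$, so $f_*(\cO_Y/\cI)$ is a skyscraper sheaf at $P$. By Lemma \ref{lem:vanishing} one has $R^1f_*\fb=0$, and its proof also yields $R^1f_*\cO_Y=0$: indeed $R^1f_{l*}\cO_{Y_l}=0$ by base change from Kawamata--Viehweg vanishing on $\bar Y_l\to\bA_k^d$, and the Leray edge map embeds $R^1f_*\cO_Y=R^1f_*(p_{l*}\cO_{Y_l})$ into it. Applying $f_*$ to $0\to\fb\to\cO_Y\to\cO_Y/\fb\to0$ and using $R^1f_*\fb=0$ gives a surjection $\cO_X=f_*\cO_Y\twoheadrightarrow f_*(\cO_Y/\fb)$, i.e.\ $\cO_{X,P}+\fb_{P_Y}=\cO_{Y,P_Y}$ on stalks; since $\fb\subset\cI$, a fortiori $\cO_{X,P}+\cI_{P_Y}=\cO_{Y,P_Y}$, i.e.\ $f_*\cO_Y\to f_*(\cO_Y/\cI)$ is surjective. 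Then applying $f_*$ to $0\to\cI\to\cO_Y\to\cO_Y/\cI\to0$ and combining this surjectivity with $R^1f_*\cO_Y=0$ shows the connecting map $f_*(\cO_Y/\cI)\to R^1f_*\cI$ vanishes and $R^1f_*\cI\hookrightarrow R^1f_*\cO_Y=0$, hence $R^1f_*\cI=0$.

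The crux is therefore to prove that $\cO_Y/\fb$ is supported at $P_Y$. Each prime component $E_i$ of $N_l$ is, by the construction of $N_l$ from $\ru{-\Delta_l}$, a non-subklt divisor of the pair $(X,\fa\agl{l})$; as $\fa\agl{l}$ is $\fm$-primary this forces $c_X(E_i)=\{P\}$, so $c_Y(E_i)\subset\Nklt_Y(X,\fa\agl{l})$, and hence $\Supp(\cO_Y/\fb)\subset p_l(\Supp N_l)=\bigcup_ic_Y(E_i)\subset\Nklt_Y(X,\fa\agl{l})$. Now $\Nklt_Y(X,\fa\agl{l})=q(\Nklt_W(X,\fa\agl{l}))$, and for $l$ large the fatness reduction of Lemma \ref{lem:fat}, applied with $\Delta=-K_{Y/X}$ and the log resolution $q\colon W\to Y$, identifies $\Nklt_W(X,\fa\agl{l})$ with $F\cap g^{-1}(P)$; since $q(F)=\Nklt_Y(X,\fa)=C_Y$ (the only lc centre being the curve $C$) and $C_Y\cap f^{-1}(P)=\{P_Y\}$ by Remark \ref{rmk:point}, this gives $\Nklt_Y(X,\fa\agl{l})\subset\{P_Y\}$, as wanted. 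The one point to check along the way is that no $E_i$ is the strict transform of an $f$-exceptional prime divisor $E$ on $Y$: such an $E$ has $c_X(E)=\{P\}$, hence $a_E(X,\fa)\ge\mld_P(X,\fa)=1$, and then $a_E(X,\fa\agl{l})\ge a_E(X,\fa^{1+\varepsilon})>0$ because $\ord_E\fa\agl{l}\le\ord_E\fa^{1+\varepsilon}$ and $\varepsilon$ was chosen with $\Nklt_W(X,\fa^{1+\varepsilon})=\Nklt_W(X,\fa)$; so $N_l$ is $p_l$-exceptional and its image on $Y$ is at most a curve.

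The hard part is this support statement: the cohomological argument is a routine chase with the two short exact sequences above once $\cO_Y/\fb$ is known to live only at $P_Y$, but that reduction genuinely uses the bound $\mld_P(X,\fa)=1$, peculiar to the $3$-dimensional situation with lc centre a curve, together with Lemma \ref{lem:fat}; without it the conclusion would fail, for instance for the ideal of two distinct points lying in $\Supp(\cO_Y/\fb)$.
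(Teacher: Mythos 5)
Your proof is correct in its essentials and rests on the same key observation as the paper's, namely that $\cO_Y/p_{l*}\cO_{Y_l}(-N_l)$ is supported only at the point $P_Y$, combined with Lemma \ref{lem:vanishing}. The cohomological chase is routed differently, though: the paper simply takes the cokernel $\cQ$ of $p_{l*}\cO_{Y_l}(-N_l)\hookrightarrow\cI$, notes it is a skyscraper sheaf so that $R^1f_*\cQ=0$, and reads off $R^1f_*\cI=0$ directly from the sequence $0\to p_{l*}\cO_{Y_l}(-N_l)\to\cI\to\cQ\to0$ together with Lemma \ref{lem:vanishing}. Your detour through $\cO_Y$ also works but costs two extra inputs ($R^1f_*\cO_Y=0$ and the surjectivity of $\cO_X\to f_*(\cO_Y/\cI)$), both of which you justify correctly; the paper's route avoids them.

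Two inaccuracies in your supporting remarks deserve attention. First, you invoke Lemma \ref{lem:fat} ``for $l$ large'' to identify $\Nklt_W(X,\fa\agl{l})$ with $F\cap g^{-1}(P)$, whereas the statement must hold for the $f_l$ ``as above'', that is, for an arbitrary $l$ (it is applied in Lemma \ref{lem:approx_van} to a specific $l$). Fortunately you only need the inclusion $\Nklt_W(X,\fa\agl{l})\subset F\cap g^{-1}(P)$, which is the trivial half of Lemma \ref{lem:fat} and holds for every $l$, since $\fa_j\subset\fa_j+\fm^l$ gives $a_E(X,\fa\agl{l})\ge a_E(X,\fa^{1+\varepsilon})$ and $\fa\agl{l}$ is $\fm$-primary. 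Second, the equality $\mld_P(X,\fa)=1$ is not available: only $\mld_P(X,\fa)>0$ is known at this stage, and $\mld_P(X,\fa)\le1$ is Proposition \ref{prp:curve}, proved later as a consequence of Theorem \ref{thm:centre}. The check this claim is meant to support, that no component of $N_l$ survives as a divisor on $Y$, is in any case redundant: the containment $p_l(\Supp N_l)=\Nklt_Y(X,\fa\agl{l})\subset q(F\cap g^{-1}(P))=\{P_Y\}$ already precludes a divisorial image. What the support statement genuinely uses is therefore not an mld bound but Remark \ref{rmk:point} together with the uniqueness of the lc centre $C$, which give $C_Y\cap f^{-1}(P)=\{P_Y\}$.
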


\begin{proof}
By (\ref{eqn:F}) for $\Delta=-K_{Y/X}$ and (\ref{eqn:al}), we see $p_l(\Supp N_l)=\Nklt_Y(X,\fa\agl{l})\subset q(F\cap g^{-1}(P))=C_Y\cap f^{-1}(P)=P_Y$, whence the cokernel $\cQ$ of the natural injection $p_{l*}\cO_{Y_l}(-N_l)\hookrightarrow\cI$ is a skyscraper sheaf. In particular, $R^1f_*\cQ=0$. Apply $f_*$ to the exact sequence
\begin{align*}
0\to p_{l*}\cO_{Y_l}(-N_l)\to\cI\to\cQ\to0.
\end{align*}
By Lemma \ref{lem:vanishing} and $R^1f_*\cQ=0$, we obtain $R^1f_*\cI=0$.
\end{proof}

We fix an irreducible component $D$ of $F\cap q^{-1}(P_Y)$, which is a curve, and let $E_D\subset q^{-1}(P_Y)$ and $F_D\subset F$ be the prime divisors such that $D\subset E_D\cap F_D$. We derive a vanishing for ideal sheaves on $Y$ close to that of $C_Y$.

\begin{lemma}\label{lem:approx_van}
$R^1f_*(q_*(\cO_W(-aE_D)+\cO_W(-F_D)))=0$ for any $a\in\bN$.
\end{lemma}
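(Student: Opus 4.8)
The plan is to deduce Lemma \ref{lem:approx_van} from Lemma \ref{lem:refine} by exhibiting the ideal sheaf $\cI := q_*(\cO_W(-aE_D)+\cO_W(-F_D))$ on $Y$ as one that contains $p_{l*}\cO_{Y_l}(-N_l)$ for a suitable choice of $l$. First I would record that $\cI$ is genuinely an ideal sheaf on $Y$: since $q\colon W\to Y$ is a projective birational morphism of normal varieties and $aE_D\ge 0$, $F_D\ge 0$ are effective, the pushforward $q_*(\cO_W(-aE_D)+\cO_W(-F_D))$ is a subsheaf of $q_*\cO_W=\cO_Y$. Its co-support is contained in $q(E_D\cap F_D)\supset D$, which lies over $P_Y$ (indeed $E_D\subset q^{-1}(P_Y)$), so $\cO_Y/\cI$ is supported at the single point $P_Y$.

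Next, the core comparison. I want to show $p_{l*}\cO_{Y_l}(-N_l)\subset\cI$ for $l\gg 0$. Using the tower of blow-ups \eqref{eqn:tower} attached to $D$ and Lemma \ref{lem:tower}(\ref{itm:tower_ideal}), we have $h_{i*}\cO_{W_i}(-aE_i)\subset\cO_W(-aE_D)+\cO_W(-F_D)$ for every $i\in\bN$ and $a\in\bN$, where the $W_i\to W$ are the iterated blow-ups. Pushing forward by $q$ and composing, for the composite $W_i\to W\to Y$ one gets the pushforward of $\cO_{W_i}(-aE_i)$ landing inside $\cI$. The point of the tower is exactly Lemma \ref{lem:tower}(\ref{itm:tower_ld}): for $i$ large enough, $a_{E_i}(X,\fa^{1+\varepsilon})\le 0$, hence (as in the proof of Lemma \ref{lem:fat}) for $l$ large the divisor $E_i$ appears in $N_l$ with enough multiplicity that $\ord_{E_i}N_l\ge a\cdot(\text{the order with which }E_i\text{ is extracted})$; more precisely, choosing $l_D$ so that $l_D\ord_{E_i}\fm\ge\ord_{E_i}\fa_j$ for all $j$ guarantees $\ord_{E_i}(-\Delta_l)$ is as negative as we wish on that valuation. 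Concretely I would arrange, for a fixed target $a$, an index $i$ and then $l$ so that the local behaviour of $p_{l*}\cO_{Y_l}(-N_l)$ at $P_Y$ is dominated by $h_{i*}q_*\cO_{W_i}(-aE_i)$, which by the above is contained in $\cI$. Since both sheaves agree with $\cO_Y$ away from $P_Y$ and the containment is checked on stalks at $P_Y$, this suffices.

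Having established $p_{l*}\cO_{Y_l}(-N_l)\subset\cI\subset\cO_Y$ for this $l$, Lemma \ref{lem:refine} applies verbatim and gives $R^1f_*\cI=0$, which is the assertion. One subtlety: Lemma \ref{lem:refine} is stated for "an arbitrary ideal sheaf $\cI$ on $Y$ containing $p_{l*}\cO_{Y_l}(-N_l)$" with $l$ the one appearing in that lemma's setup, but the relevant $l$ is not fixed there — it depends on the log resolution $f_l$ chosen for $(X,\fa\agl{l}\fm)$. So the real content is: choose $l$ (equivalently the resolution $f_l$) large enough that $N_l$ is supported at $P_Y$ and picks up the divisor $E_i$ with the needed multiplicity, then the tower estimate forces the containment.

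The main obstacle I expect is bookkeeping the multiplicities: matching the exponent $a$ of $E_D$ and $F_D$ in $\cI$ against the coefficient of $E_i$ in $N_l=-\ru{-\Delta_l}^{-}$ through the blow-up tower, and confirming that pushing $\cO_{W_i}(-aE_i)$ forward along $W_i\to Y$ — which factors through $W\to Y$ — indeed lands inside $q_*(\cO_W(-aE_D)+\cO_W(-F_D))$ rather than something smaller, i.e. that composing pushforwards does not lose the containment in Lemma \ref{lem:tower}(\ref{itm:tower_ideal}). This is essentially formal since $q_*$ is left-exact and $h_{i*}=q_*\circ(\text{rest})$, but one must be careful that the relevant comparison is between stalks at $P_Y$ and that no extra vanishing along $C_Y$ is needed — which is guaranteed because everything in sight equals $\cO_Y$ generically along $C_Y$.
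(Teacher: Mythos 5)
Your proposal is correct and follows the paper's own argument essentially step for step: choose $i$ in the tower (\ref{eqn:tower}) so that Lemma \ref{lem:tower}(\ref{itm:tower_ld}) gives $a_{E_i}(X,\fa^{1+\varepsilon})\le-a$ (your intermediate remark that ``$a_{E_i}\le0$ for $i$ large'' is not by itself enough --- the threshold must depend on $a$, as you in fact arrange afterwards), pick $l$ with $l\ord_{E_i}\fm\ge\ord_{E_i}\fa_j$ so that $a_{E_i}(X,\fa\agl{l})=a_{E_i}(X,\fa^{1+\varepsilon})$, take $f_l$ extracting $E_i$ as a divisor, and deduce $p_{l*}\cO_{Y_l}(-N_l)\subset q_*\bigl(\cO_W(-aE_D)+\cO_W(-F_D)\bigr)$ from Lemma \ref{lem:tower}(\ref{itm:tower_ideal}), so that Lemma \ref{lem:refine} applies. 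The bookkeeping you flag as the remaining obstacle is exactly what the paper's displayed containments (\ref{eqn:ideal}) and (\ref{eqn:ld}) carry out.
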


\begin{proof}
Take the tower of blow-ups in (\ref{eqn:tower}). For fixed $a$, choose $i\in\bN$ such that $a_{E_D}(X,\fa^{1+\epsilon})-i\epsilon\ord_{F_D}\fa\le-a$. Then Lemma \ref{lem:tower} for $\Delta=-K_{Y/X}$ shows
\begin{align}\label{eqn:ideal}
h_{i*}\cO_{W_i}(\ru{a_{E_i}(X,\fa^{1+\varepsilon})}E_i)\subset h_{i*}\cO_{W_i}(-aE_i)\subset\cO_W(-aE_D)+\cO_W(-F_D).
\end{align}
Take $l$ such that $l\ord_{E_i}\fm\ge\ord_{E_i}\fa_j$ for all $j$. Then,
\begin{align}\label{eqn:ld}
a_{E_i}(X,\fa^{1+\varepsilon})=a_{E_i}(X,\fa\agl{l}).
\end{align}
For this $l$, we take a log resolution $f_l\colon Y_l\to X$ of $(X,\fa\agl{l}\fm)$ which factors through $f$, such that $c_{Y_l}(E_i)$ is a divisor. Then by (\ref{eqn:ideal}) and (\ref{eqn:ld}), one can apply Lemma \ref{lem:refine} to $\cI=q_*(\cO_W(-aE_D)+\cO_W(-F_D))$.
\end{proof}

We set the ideal sheaf $\fn_a$ on $C_Y$ as
\begin{align*}
\fn_a:=q_*(\cO_W(-aE_D)+\cO_W(-F_D))\cdot\cO_{C_Y}.
\end{align*}

\begin{lemma}\label{lem:topology}
There exists $a$ such that $f_{C*}\fn_a\subset\cO_C$.
\end{lemma}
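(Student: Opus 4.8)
The statement asserts that after suitably enlarging $a$, the ideal sheaf $f_{C*}\fn_a$ on $C$ — a priori only a subsheaf of $f_{C*}\cO_{C_Y}$ — actually lands inside $\cO_C$. Since $f_C\colon C_Y\to C$ is the normalisation and is an isomorphism away from $P$ (indeed $f_C^{-1}(P)=P_Y$ by Remark \ref{rmk:point}), the only place where $f_{C*}\fn_a$ can fail to be contained in $\cO_C$ is the point $P$, so the whole question is local at $P$. The plan is therefore to pass to the local rings: writing $A=\cO_{C,P}$ and $B=\cO_{C_Y,P_Y}$ with $A\subset B$ a finite birational extension having the same fraction field, and writing $\mathfrak{c}$ for the conductor ideal of $B$ over $A$ (a nonzero ideal of $B$ contained in $A$), it suffices to show that the stalk $(\fn_a)_{P_Y}$ is contained in $\mathfrak{c}$ for $a\gg0$. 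Equivalently, one must show that the image of the ideal $q_*(\cO_W(-aE_D)+\cO_W(-F_D))$ in $\cO_{C_Y}$ becomes, at the point $P_Y$, as small as one likes in the $\mathfrak{n}$-adic topology (where $\mathfrak{n}$ is the maximal ideal of $B$); since $\mathfrak{c}\supset\mathfrak{n}^N$ for some $N$ by finiteness of $B$ over $A$, a single uniform power suffices.

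The key step is thus an estimate: the order along the divisor $F_D$ (whose restriction to $W$ dominates $C_Y$, since $F_D$ is the strict transform of the lc centre $C$ and $D\subset E_D\cap F_D$ maps to $P_Y$) of the ideal $\cO_W(-aE_D)+\cO_W(-F_D)$ forces its restriction to $F_D$, pushed down to $C_Y$, to have high $\mathfrak{n}$-adic order at $P_Y$ once $a$ is large. Concretely, I would argue via the isomorphism $F_i\simeq F_D$ used in Lemma \ref{lem:tower}(\ref{itm:tower_ideal}): restricting to $F_D$ and pushing forward along $W\to Y$, the ideal $q_*(\cO_W(-aE_D)+\cO_W(-F_D))\cdot\cO_{C_Y}$ is contained in the ideal cut out by $-aE_D|_{F_D}$ transported down to $C_Y$, whose vanishing order at $P_Y$ grows linearly in $a$ because $E_D$ meets $F_D$ along $D$ which dominates $P_Y$. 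Choosing $a$ so that this order exceeds the integer $N$ with $\mathfrak{n}^N\subset\mathfrak{c}$ finishes the proof. The main obstacle I anticipate is making the restriction-and-pushforward bookkeeping precise — one must be careful that $q_*$ of the restriction to $F_D$ really computes the image in $\cO_{C_Y}$ (this uses that $W\to Y$ is a log resolution, so that $F_D\to C_Y$ is nice enough, e.g.\ birational onto its image or at least surjective with connected fibres), and that the local ring $B$ being a one-dimensional normal local ring (hence a DVR if $P_Y$ is a regular point, which it is since $C_Y$ is smooth) makes the $\mathfrak{n}$-adic estimate unambiguous. Given that $C_Y$ is non-singular, $B$ is a DVR, $\mathfrak{c}$ is simply $\mathfrak{n}^{c}$ for the integer $c=\dim_k B/\mathfrak{c}=\dim_k(\text{coker}(A\hookrightarrow B))$ measuring the "delta invariant" of the singularity $P\in C$, and the estimate reduces to: $\ord_{P_Y}(\fn_a)\ge c$ for $a$ large, which follows by the order bound above.

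Once this is in hand, the promised chain of inclusions $f_{C*}\fn_a\subset\cO_C$ holds, and combined with the vanishing $R^1f_*(q_*(\cO_W(-aE_D)+\cO_W(-F_D)))=0$ from Lemma \ref{lem:approx_van} — which will give the reverse-direction surjectivity needed to identify $\cO_C/f_{C*}\fn_a$ with $f_{C*}\cO_{C_Y}/f_{C*}\fn_a$ — one gets $\cO_C=f_{C*}\cO_{C_Y}$, i.e.\ the normality of $C$, completing Theorem \ref{thm:centre}. For the present lemma, though, only the containment $f_{C*}\fn_a\subset\cO_C$ is claimed, and the whole content is the local $\mathfrak{n}$-adic order estimate at $P_Y$ described above.
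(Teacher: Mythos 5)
Your proposal is correct, and its first half --- bounding the order at $P_Y$ of the image of $\cO_W(-aE_D)+\cO_W(-F_D)$ in $\cO_{C_Y}$ linearly in $a$ by restricting to $F_D$ and pushing down --- is exactly the paper's inclusion (\ref{eqn:fibre}), up to one misstatement: $F_D$ is not the strict transform of the curve $C$ (it is a non-klt divisor of $(X,\fa)$ on $W$ whose centre on $Y$ is $C_Y$), and the $\fn$ of the paper is the maximal ideal of $P_Y$ in the ambient threefold $Y$, so $\fn\cO_{C_Y}$ is only some power of the maximal ideal $\fm_B$ of the DVR $B=\cO_{C_Y,P_Y}$; neither point damages the estimate. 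Where you genuinely diverge is the second half, the conversion of ``high order on $C_Y$ at $P_Y$'' into ``contained in $\cO_C$''. The paper does this geometrically: it takes an $f$-ample $-A$, applies Serre vanishing to $\cI_{C_Y}(-mA)$, and obtains (\ref{eqn:Serre}), namely $f_{C*}\fn^{tm}\cO_{C_Y}=f_*\cO_Y(-mA)\cdot\cO_C\subset\cO_C$. You instead use the conductor $\fc$ of the finite birational extension $A=\cO_{C,P}\subset B$: since $C$ is excellent the normalisation $f_C$ is finite, $f_C^{-1}(P)=P_Y$ makes the question local at $P$, $\fc$ is a nonzero ideal of the DVR $B$, hence $\fc=\fm_B^c$, and every ideal of $B$ of order at least $c$ lies in $A$. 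This is a valid and more elementary route --- pure commutative algebra in place of a vanishing theorem --- at the cost of invoking finiteness of the normalisation explicitly; the paper's choice has the side benefit that the family $\{f_*\cO_Y(-mA)\}_m$ is reused in the subsequent remark on Zariski's subspace theorem. The bookkeeping you flag as the main obstacle is exactly what the paper settles via $q_*\cO_{F_D}(-nlE_D|_{F_D})=\fn^l\cO_{C_Y}$ with $n=\ord_{E_D}\fn$, so your argument closes up in the same way.
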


\begin{proof}
Note that $\fn\cO_{C_Y}$ is an invertible ideal sheaf on $C_Y$. Set $n=\ord_{E_D}\fn$, then
\begin{align}\label{eqn:fibre}
\fn_{nl}\subset q_*\cO_{F_D}(-nlE_D|_{F_D})=\fn^l\cO_{C_Y}
\end{align}
for any $l$. Take an $f$-exceptional divisor $A\ge0$ on $Y$ such that $-A$ is $f$-ample and set $\cO_{C_Y}(-A|_{C_Y})=\fn^t\cO_{C_Y}$. By Serre vanishing theorem \cite[Th\'eor\`eme III.2.2.1]{EGA}, there exists $m_0$ such that $R^1f_*\cI_{C_Y}(-mA)=0$ for any $m\ge m_0$, where $\cI_{C_Y}$ is the ideal sheaf of $C_Y$ on $Y$. Then we have the surjection $f_*\cO_Y(-mA)\twoheadrightarrow f_{C*}\cO_{C_Y}(-mA|_{C_Y})=f_{C*}\fn^{tm}\cO_{C_Y}$, which provides
\begin{align}\label{eqn:Serre}
f_{C*}\fn^{tm}\cO_{C_Y}=f_*\cO_Y(-mA)\cdot\cO_C\subset\cO_C.
\end{align}
Combining (\ref{eqn:fibre}) and (\ref{eqn:Serre}), we obtain $f_{C*}\fn_{ntm}\subset f_{C*}\fn^{tm}\cO_{C_Y}\subset\cO_C$ for $m\ge m_0$.
\end{proof}

\begin{proof}[Proof of the normality of $C$]
Applying $f_*$ to the exact sequence
\begin{align*}
0\to q_*(\cO_W(-aE_D)+\cO_W(-F_D))\to\cO_Y\to\cO_{C_Y}/\fn_a\to0
\end{align*}
and using Lemma \ref{lem:approx_van}, we obtain the surjection $\cO_X\twoheadrightarrow f_{C*}(\cO_{C_Y}/\fn_a)$. This homomorphism is factored as $\cO_X\twoheadrightarrow\cO_C/f_{C*}\fn_a\cap\cO_C\hookrightarrow f_{C*}\cO_{C_Y}/f_{C*}\fn_a\hookrightarrow f_{C*}(\cO_{C_Y}/\fn_a)$, so we have an isomorphism $\cO_C/f_{C*}\fn_a\cap\cO_C\simeq f_{C*}\cO_{C_Y}/f_{C*}\fn_a$. For $a$ in Lemma \ref{lem:topology}, it is $\cO_C/f_{C*}\fn_a\simeq f_{C*}\cO_{C_Y}/f_{C*}\fn_a$. Therefore $\cO_C\simeq f_{C*}\cO_{C_Y}$, meaning the normality of $C$.
\end{proof}

Theorem \ref{thm:centre} is established.

\begin{remark}
\begin{enumerate}
\item
One may prove the normality of $C$ by using Zariski's subspace theorem \cite[(10.6)]{Ab98}. One has an isomorphism $\cO_C/f_{C*}\fn_a\cap\cO_C\simeq f_{C*}(\cO_{C_Y}/\fn_a)$ for any $a$. By (\ref{eqn:fibre}), the family $\{\fn_a\}_a$ gives the $\fn\cO_{C_Y}$-adic topology. Since the family $\{f_*\cO_Y(-mA)\}_m$ in the proof of Lemma \ref{lem:topology} gives the $\fm$-adic topology by Zariski's subspace theorem (cf.\ \cite[Lemma 3]{K07}), we see from (\ref{eqn:Serre}) that the family $\{f_{C*}\fn_a\cap\cO_C\}_a$ as well as $\{f_{C*}\fn^a\cO_{C_Y}\cap\cO_C\}_a$ gives the $\fm\cO_C$-adic topology. Hence $\widehat{\cO_{C,P}}\simeq\varprojlim_a\cO_C/f_{C*}\fn_a\cap\cO_C\simeq\varprojlim_af_{C*}(\cO_{C_Y}/\fn_a)\simeq\widehat{\cO_{C_Y,P_Y}}$ and $C$ is normal by \cite[Proposition IV.2.1.13]{EGA}.
\item
The author used Zariski's subspace theorem in the proof of \cite[(10)]{K13-1}, but it derives only the inclusion $\bar\varphi_*\cO_{\bar{X}}(-l_2E_Z)\subset\cI_Z^{(l_1)}$ for the $l$-th symbolic power $\cI_Z^{(l)}$ of $\cI_Z$. In order to obtain \cite[(10)]{K13-1}, we need the equivalence of the $\cI_Z$-adic topology and the $\cI_Z$-symbolic topology by \cite[\S6 Lemma 3]{Z51} (see also \cite{Sc85}, \cite{Ve88}).
\end{enumerate}
\end{remark}

\section{The ACC for minimal log discrepancies}\label{sec:acc}
In this section, we discuss the ACC for minimal log discrepancies on non-singular varieties from the point of view of generic limits.

\subsection{Statements}
We begin with the statement of the ACC conjecture.

\begin{definition}
We say that a subset $I$ of $\bR$ satisfies the \textit{ascending chain condition} (\textit{ACC}) (resp.\ the \textit{descending chain condition} (\textit{DCC})) if there exist no infinite strictly increasing (resp.\ strictly decreasing) sequences of elements in $I$.
\end{definition}

\begin{remark}\label{rmk:cc}
$I\subset\bR$ is finite if and only if $I$ satisfies both the ACC and DCC.
\end{remark}

\begin{definition}
Let $P\in(X,\Delta=\sum_i\delta_i\Delta_i,\fa=\prod_j\fa_j^{r_j})$ be a germ of a triplet. We write $\Coef_P(\Delta,\fa)$ for the set which consists of all $\delta_i>0$ with $\Delta_i$ passing through $P$ and all $r_j>0$ with $\fa_j$ non-trivial at $P$.
\end{definition}

\begin{conjecture}[Shokurov \cite{Sh88}, \cite{Sh96}, McKernan \cite{M-MSRI}]\label{cnj:acc}
Fix $d\in\bN$ and subsets $I\subset(0,\infty)$ and $J\subset[0,\infty)$ both of which satisfy the DCC. Then there exist finite subsets $I_0\subset I$ and $J_0\subset J$ such that if $P\in(X,\Delta,\fa)$ is a germ of a triplet on a variety $X$ of dimension $d$ with $\Coef_P(\Delta,\fa)\subset I$ and $\mld_P(X,\Delta,\fa)\in J$, then $\Coef_P(\Delta,\fa)\subset I_0$ and $\mld_P(X,\Delta,\fa)\in J_0$.
\end{conjecture}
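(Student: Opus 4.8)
The full conjecture is out of reach, so the plan is to lay out the generic-limit strategy that underlies this paper and to say exactly where it suffices. First I would reduce to a germ at a closed point $P$ of a non-singular variety $X$ of dimension $d$ with $\Delta=0$: each prime $\Delta_i$ is locally principal on the smooth locus, so $\Delta$ can be absorbed into $\fa$ without changing $\mld_P$ or the coefficient set; passing to the completion $\widehat{\cO_{X,P}}$ preserves everything by Corollary \ref{cor:extension}; and here a genuine restriction enters, namely assuming $X$ smooth so that $\widehat{\cO_{X,P}}\simeq k[[x_1,\ldots,x_d]]$. One is thus reduced to bounding, for $P\in X=\Spec k[[x_1,\ldots,x_d]]$, the coefficient sets and mld values of pairs $(X,\fa=\prod_j\fa_j^{r_j})$ with all $r_j\in I$ and $\mld_P(X,\fa)\in J$.

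Suppose the conclusion fails. Diagonalising over finite subsets of $I$ and $J$ one obtains a single infinite sequence $(X,\fa_i)$ of such pairs along which the exponent sets or the mld values do not stabilise. After passing to a subsequence the exponents are constant and, by the construction of de Fernex--Musta\c{t}\u{a}, the $\fa_i$ admit a generic limit $\sfa$, an $\bR$-ideal on $P_K\in X_K=\Spec K[[x_1,\ldots,x_d]]$ over a field extension $K/k$ whose exponents again lie in $I$. The crux is the \emph{stability statement} $\mld_{P_K}(X_K,\sfa)=\mld_P(X,\fa_i)$ for general $i$: granting it, the common mld value lies in $J$ and is the eventual value of the sequence, forcing the mld values (and, already, the exponents) to stabilise, which contradicts the choice of the sequence and produces the finite $I_0$, $J_0$ by the usual reduction of the ACC to stability.

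To prove the stability statement I would induct on the type of the smallest lc centre of $(X_K,\sfa)$. If $(X_K,\sfa)$ is klt, or plt with an lc centre having an isolated singularity, stability is known by the previous arguments \cite{K13-1}, \cite{K13-2}. In dimension $3$ the only remaining configuration is that $(X_K,\sfa)$ has smallest lc centre $C$ of dimension $1$; here Theorem \ref{thm:centre} provides that $C$ is normal, and a direct computation on $C$ using this normality gives $\mld_{P_K}(X_K,\sfa)\le1$. When $J\subset(1,3]$ this contradicts $\mld_{P_K}(X_K,\sfa)=\mld_P(X,\fa_i)>1$, so the bad configuration does not occur, only the klt/plt cases remain, and stability holds; this is precisely Theorem \ref{thm:acc}.

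The main obstacle, and the reason the conjecture is not settled in general, is twofold. Even in dimension $3$ with $\mld_P\le1$ one would need to control the singularities of lc centres of higher codimension rather than just the normality of a curve, and no such control is available. In dimension $\ge4$ one does not even know the existence of the smallest lc centre over a formal power series ring, so the inductive scheme fails at its first step; and the reduction above, which assumes $X$ smooth, says nothing about singular $X$. Thus the proposal establishes the non-singular $3$-fold case with $J\subset(1,3]$ and no more.
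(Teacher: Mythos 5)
The statement you were asked about is a conjecture; the paper does not prove it, and you rightly do not claim to either. Your outline of the partial result that the paper does establish (Theorem \ref{thm:acc}) follows the paper's route exactly: reduce to the non-singular case with $\Delta=0$ (Conjecture \ref{cnj:acc}$\mathrm{'}$), pass to a generic limit and reduce the ACC to the stability statement (Conjecture \ref{cnj:stability}, via Proposition \ref{prp:reduction}), settle the klt and plt cases (Theorems \ref{thm:klt}, \ref{thm:plt}), and in dimension $3$ kill the remaining case of a one-dimensional smallest lc centre by Theorem \ref{thm:centre} and Proposition \ref{prp:curve}, which give $\mld_{P_K}(X_K,\sfa)\le1$.

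There is, however, one genuine flaw in your reduction step: you assert that ``after passing to a subsequence the exponents are constant.'' This is false when $I$ is infinite (e.g.\ $I=\{1-1/n\}_n$ admits sequences with no constant subsequence), and if it were true the finiteness of $I_0$ would be nearly vacuous. What the DCC actually gives is a subsequence along which the $r_{ij}$ are \emph{non-decreasing}; the paper then forms the generic limit with the \emph{limit} exponents $r_j=\lim_i r_{ij}$ (which need not lie in $I$, contrary to your remark) and uses the inequality (\ref{eqn:mM}), namely $M\le m_i\le M+\sum_j(r_j-r_{ij})\ord_{E_K}\sfa_j$, to deduce simultaneously that $m_i=M$ and that $r_{ij}=r_j$ for large $i$. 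The stabilisation of the exponents is thus an output of the stability conjecture combined with this mld computation, not something obtainable by subsequence extraction; your parenthetical ``(and, already, the exponents)'' papers over the step that actually produces $I_0$. The rest of your discussion, including the identified obstructions in higher dimension and for $\mld\le1$, is accurate.
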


Conjecture \ref{cnj:acc} by McKernan is a generalisation of the original conjecture by Shokurov, which claims only the existence of $J_0$. When $d=2$, the existence of $J_0$ was proved by Alexeev \cite{Al93}. The motivation of this conjecture stems from the reduction by Shokurov \cite{Sh04} that the termination of flips follows from two conjectural properties of minimal log discrepancies: the ACC and the lower semi-continuity. For the purpose of the termination of flips, one may assume $I$ in Conjecture \ref{cnj:acc} to be a finite set.

We consider Conjecture \ref{cnj:acc} with the assumption of the non-singularity of $X$. Then we may assume $\Delta=0$ by absorbing $\Delta$ to $\fa$, since any divisor on $X$ is a Cartier divisor.

\begin{conjecture'acc}
Fix $d\in\bN$ and subsets $I\subset(0,\infty)$ and $J\subset[0,d]$ both of which satisfy the DCC. Then there exist finite subsets $I_0\subset I$ and $J_0\subset J$ such that if $P\in(X,\fa)$ is a germ of a pair on a non-singular variety $X$ of dimension $d$ with $\Coef_P\fa\subset I$ and $\mld_P(X,\fa)\in J$, then $\Coef_P\fa\subset I_0$ and $\mld_P(X,\fa)\in J_0$.
\end{conjecture'acc}

Theorem \ref{thm:acc} is Conjecture \ref{cnj:acc}$\mathrm{'}$ for $d=3$ with $J\subset(1,3]$. Conjecture \ref{cnj:acc}$\mathrm{'}$ with $I$ finite was proved in \cite{K14}.

\subsection{Reduction}
We shall reduce Conjecture \ref{cnj:acc}$\mathrm{'}$ to the stability of minimal log discrepancies in taking a generic limit of $\bR$-ideals. We refer to Appendix \ref{sec:appendix} for the definition of a generic limit and the relevant notation: $R=k[[x_1,\ldots,x_d]]$ with maximal ideal $\fm$ and $X=\Spec R$ with closed point $P$, and for a field extension $K$ of $k$, $R_K=K[[x_1,\ldots,x_d]]$ with maximal ideal $\fm_K$ and $X_K=\Spec R_K$ with closed point $P_K$.

\begin{conjecture}[{\cite[Conjecture 5.7]{K14}}]\label{cnj:stability}
Fix $r_1,\ldots,r_e>0$. Let $S=\{(\fa_{i1},\ldots,\fa_{ie})\}_{i\in I}$ be a collection of $e$-tuples of ideals in $R =k[[x_1,\ldots,x_d]]$, and $(\sfa_1,\ldots,\sfa_e)$ the generic limit of $S$ defined in $R_K$ with respect to a family $\cF$ of approximations of $S$. Set $\fa_i=\prod_j\fa_{ij}^{r_j}$ and $\sfa=\prod_j\sfa_j^{r_j}$. Then after replacing $\cF$ with a subfamily,
\begin{align*}
\mld_{P_K}(X_K,\sfa)=\mld_P(X,\fa_i)
\end{align*}
for any $i\in I_l$.
\end{conjecture}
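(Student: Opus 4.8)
The plan is to establish Conjecture \ref{cnj:stability}, the stability of the minimal log discrepancy under a generic limit, by separately proving the two inequalities $\mld_{P_K}(X_K,\sfa)\le\mld_P(X,\fa_i)$ and $\mld_{P_K}(X_K,\sfa)\ge\mld_P(X,\fa_i)$ for general $i$. The "semicontinuity'' inequality $\mld_{P_K}(X_K,\sfa)\ge\mld_P(X,\fa_i)$ for general $i$ should be the easy one: having fixed a log resolution computing $\mld_{P_K}(X_K,\sfa)$, the constructibility of the relevant data in the approximations $\cF$ (the jets of the ideals agree to high enough order along the generic point of the parameter space) forces the same divisorial valuation to have the same log discrepancy on a general member $(X,\fa_i)$, so $\mld_P(X,\fa_i)\le\mld_{P_K}(X_K,\sfa)$; this is the standard behaviour of generic limits, worked out in the appendix and in \cite{dFM09}, \cite{dFEM11}.

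For the reverse inequality $\mld_{P_K}(X_K,\sfa)\le\mld_P(X,\fa_i)$, I would stratify according to the nature of the singularities of $(X_K,\sfa)$. If $(X_K,\sfa)$ is klt, or more generally plt with an lc centre that has an isolated singularity, then the previous work \cite{K13-1}, \cite{K13-2} applies directly and yields the needed bound; this is the case referred to in the introduction. If $(X_K,\sfa)$ is not klt, pass to the lc boundary case by scaling and consider the smallest lc centre. In dimension $d=3$, by Theorem \ref{thm:centre} the smallest lc centre exists and is normal; when it is a point or a surface the earlier arguments again suffice, and the only residual case is when the smallest lc centre of $(X_K,\sfa)$ is a curve. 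Here the key observation is that the existence of a normal (hence, after Remark \ref{rmk:point}, unibranch at $P_K$) one-dimensional lc centre forces $\mld_{P_K}(X_K,\sfa)\le1$: one can exhibit a divisor over $X_K$ centred at $P_K$ with log discrepancy at most $1$, for instance by blowing up the curve and analysing the exceptional divisor, using normality to control the multiplicity. Combined with the fact that in the surviving range $J\subset(1,3]$ this forces the generic limit situation not to arise (or to be handled by the bound $\mld\le1$ versus $\mld_P(X,\fa_i)\in J\subset(1,3]$), this closes the gap and hence, via the reduction of Conjecture \ref{cnj:acc}$\mathrm{'}$ to Conjecture \ref{cnj:stability}, proves Theorem \ref{thm:acc}.

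The main obstacle I anticipate is the curve case of the smallest lc centre: extracting the inequality $\mld_{P_K}(X_K,\sfa)\le1$ from Theorem \ref{thm:centre}. Normality of the lc centre $C$ is not by itself an estimate on discrepancies, so one must produce an explicit valuation. The natural candidate is the divisorial valuation obtained by blowing up $C$ inside a log resolution, and the point is to bound its log discrepancy using that $C$ is smooth (after normalisation, which by Theorem \ref{thm:centre} is $C$ itself) and is genuinely an lc centre, so adjunction along $C$ forces $(C,\fa|_C)$-type data to be lc, which in turn caps the discrepancy of a suitable divisor over $P_K\in C$. Making this precise requires care with the non-$\bQ$-Gorenstein-free but formal-power-series setting — in particular that Corollary \ref{cor:extension} lets one transfer the estimate between $(X_K,\sfa)$ and the members $(X,\fa_i)$, and that the log discrepancy computations are insensitive to completion and base field extension. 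A secondary, more bookkeeping difficulty is organising the case division (klt versus plt with isolated lc centre versus non-klt with point/surface/curve lc centre) so that every configuration of a generic limit in dimension $3$ is covered, and checking that the subfamily replacements in Conjecture \ref{cnj:stability} are compatible across these cases.
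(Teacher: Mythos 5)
The statement you were asked to prove is a conjecture, not a theorem of the paper: Conjecture \ref{cnj:stability} is left open in general, and the paper records explicitly (Remark \ref{rmk:remain}) that it remains unresolved whenever $(X_K,\sfa)$ is non-klt with $\mld_{P_K}(X_K,\sfa)>0$ and, in dimension $3$, whenever the smallest lc centre is a curve. Your proposal does not close this gap either. What you have actually sketched is the paper's proof of Theorem \ref{thm:acc}: the easy inequality $\mld_{P_K}(X_K,\sfa)\ge\mld_P(X,\fa_i)$ is Remark \ref{rmk:inequality} via Lemma \ref{lem:mld}; the klt case is Theorem \ref{thm:klt}; the codimension-one lc centre case is Theorem \ref{thm:plt}; the $\mld=0$ case is Corollary \ref{cor:adic0}; and in the curve case one obtains only the one-sided bound $\mld_{P_K}(X_K,\sfa)\le1$ (Proposition \ref{prp:curve}), which excludes that case from the range $J\subset(1,3]$ but is not the conjectured equality. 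So your argument, like the paper's, proves Theorem \ref{thm:acc} and selected cases of the conjecture — you should say so rather than present it as a proof of Conjecture \ref{cnj:stability}, which in addition is stated for all $d$, not just $d=3$.

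There is also a concrete technical gap in your treatment of the curve case. Blowing up the normal curve $C$ once does not "exhibit a divisor over $X_K$ centred at $P_K$ with log discrepancy at most $1$": the exceptional divisor $E_1$ of that blow-up has centre $C$, not $P_K$, and a further blow-up of a curve in $E_1$ lying over $P_K$ yields a divisor $F$ with $a_F(X_K,\sfa)=3-\ord_F\sfa$, which need not be $\le1$ because Lemma \ref{lem:curve} only guarantees $\ord_C\sfa>1$, not $\ord_C\sfa\ge2$. The paper's Proposition \ref{prp:curve} instead builds an iterated tower of blow-ups along successive non-klt curves $C_i$, each of whose non-singularity is itself an application of Theorem \ref{thm:centre} to the intermediate pair, continuing until the exceptional log discrepancy drops to $0$; only then does blowing up a curve in the last exceptional divisor over $P_K$ give $a_F\le1$, using $\Delta_n\ge E_n$. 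Your single-step sketch would need to be replaced by this induction, and the induction in turn relies on checking effectiveness of the boundaries $\Delta_i$ at each stage via the order estimate of Lemma \ref{lem:curve}.
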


Conjecture \ref{cnj:stability} is closely related to the ideal-adic semi-continuity of minimal log discrepancies.

\begin{conjecture}[Musta\c{t}\u{a}, cf.\ {\cite[Conjecture 2.5]{K13-1}}]\label{cnj:adic}
Let $P\in X=\Spec k[[x_1,\ldots,x_d]]$ and $\fm$ be as above and $\fa=\prod_j\fa_j^{r_j}$ an $\bR$-ideal on $X$. Then there exists an integer $l$ such that if an $\bR$-ideal $\fb=\prod_j\fb_j^{r_j}$ on $X$ satisfies $\fa_j+\fm^l=\fb_j+\fm^l$ for all $j$, then $\mld_P(X,\fa)=\mld_P(X,\fb)$.
\end{conjecture}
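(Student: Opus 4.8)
The statement is quantitative, so the plan is to compare $(X,\fa)$ with $(X,\fb)$ one divisor at a time. For a divisor $E$ over $X$ with $c_X(E)=P$, pulling the identity $\fa_j+\fm^l=\fb_j+\fm^l$ back to a model carrying $E$ and taking orders along $E$ gives
\[
\min(\ord_E\fa_j,\,l\,\ord_E\fm)=\min(\ord_E\fb_j,\,l\,\ord_E\fm)
\]
for every $j$, so that $\ord_E\fb_j=\ord_E\fa_j$ as soon as $\ord_E\fa_j<l\,\ord_E\fm$, and hence $a_E(X,\fa)=a_E(X,\fb)$ for every $E$ with $\ord_E\fa_j<l\,\ord_E\fm$ for all $j$. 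From this I would first extract the inequality $\mld_P(X,\fb)\le\mld_P(X,\fa)$: fix a single divisor $E_0$ with $c_X(E_0)=P$ that computes $\mld_P(X,\fa)$ when this is finite, or with $a_{E_0}(X,\fa)<0$ when $\mld_P(X,\fa)=-\infty$, and take any integer $l>\ord_{E_0}\fa_j/\ord_{E_0}\fm$ for all $j$; then $a_{E_0}(X,\fb)=a_{E_0}(X,\fa)$, and since a minimal log discrepancy is either non-negative or $-\infty$ this already forces $\mld_P(X,\fb)\le\mld_P(X,\fa)$, and forces $\mld_P(X,\fb)=-\infty$ when $\mld_P(X,\fa)=-\infty$. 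This $l$ depends only on $\fa$, and the case $\mld_P(X,\fa)=-\infty$ is now done, so from here I assume $m:=\mld_P(X,\fa)\ge0$.

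The reverse inequality $\mld_P(X,\fb)\ge m$ is the real content. By the first paragraph, a divisor $E$ can violate $a_E(X,\fb)\ge m$ only if it is \emph{deep}, meaning $\ord_E\fa_{j_0}\ge l\,\ord_E\fm$ for some $j_0$; for such $E$ the hypothesis bounds $\ord_E\fb_{j_0}$ only from below, so a priori $a_E(X,\fb)$ could be arbitrarily negative, and the task is to rule this out for a well-chosen $l=l(\fa)$. I would attack this with generic limits, the tool developed in this paper. If the conclusion failed for $\fa$, then for every $l$ there would be an $\bR$-ideal $\fb^{(l)}=\prod_j(\fb_j^{(l)})^{r_j}$ with $\fb_j^{(l)}+\fm^l=\fa_j+\fm^l$ and $\mld_P(X,\fb^{(l)})<m$ (strictly, by the first paragraph). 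Form a generic limit $\sfa$ of $\{\fb^{(l)}\}_l$ over $X_K=\Spec K[[x_1,\ldots,x_d]]$. Since the $\fb^{(l)}$ agree with $\fa$ to arbitrarily high order, every finite-order truncation of $\sfa$ coincides with that of $\fa\cO_{X_K}$, hence $\sfa=\fa\cO_{X_K}$ by Krull's intersection theorem, and so $\mld_{P_K}(X_K,\sfa)=\mld_{P_K}(X_K,\fa\cO_{X_K})=\mld_P(X,\fa)=m$ by Corollary \ref{cor:extension}. But the stability of minimal log discrepancies under generic limits, Conjecture \ref{cnj:stability}, would give $\mld_{P_K}(X_K,\sfa)=\mld_P(X,\fb^{(l)})<m$ for general $l$ in the surviving subfamily, a contradiction. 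Running the same machinery backwards — applying the desired statement over $X_K$ to a generic limit, together with Corollary \ref{cor:extension} — shows that Conjecture \ref{cnj:adic} is in fact equivalent to Conjecture \ref{cnj:stability}.

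So the main obstacle is precisely the semicontinuity of minimal log discrepancies under generic limits, equivalently a uniform bound on the complexity of a divisor computing $\mld_P(X,\fb)$ as $\fb$ runs over ideals congruent to $\fa$ modulo a fixed power of $\fm$. This lies at the same depth as the ACC conjecture for minimal log discrepancies and is open in general, so I would not expect to obtain Conjecture \ref{cnj:adic} in full here; what does go through is the range treated in this paper. When $(X_K,\sfa)$ is klt, or plt with an lc centre having an isolated singularity, the required stability holds by \cite{K13-1}, \cite{K13-2}. In dimension $3$ with $m>1$, the reductions of the paper — Proposition \ref{prp:perturb} and the case division used for Theorem \ref{thm:acc} — leave only these cases plus the possibility of a one-dimensional smallest lc centre of $(X_K,\sfa)$; but a one-dimensional smallest lc centre would, by Theorem \ref{thm:centre}, force $\mld_{P_K}(X_K,\sfa)\le1$, contrary to $m>1$. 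Hence in dimension $3$ I would expect to obtain Conjecture \ref{cnj:adic} under the extra assumption $\mld_P(X,\fa)>1$, in parallel with Theorem \ref{thm:acc}.
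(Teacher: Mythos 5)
This statement is left as a conjecture in the paper; what the paper actually proves (the second half of Proposition \ref{prp:reduction}) is precisely your reduction: the easy inequality $\mld_P(X,\fb)\le\mld_P(X,\fa)$ as in Remark \ref{rmk:inequality}, and the hard one by forming the generic limit of the putative counterexamples $\fb^{(l)}$, identifying it with $\fa R_K$ via Remark \ref{rmk:limit} and Corollary \ref{cor:extension}, and invoking Conjecture \ref{cnj:stability}; your list of provable cases (klt, plt with isolated-singularity centre, $d=3$ with $\mld>1$) also matches the paper. The one overstatement is the claimed equivalence of Conjectures \ref{cnj:adic} and \ref{cnj:stability}: running the machinery backwards requires an $l$ that works uniformly for all members $\fa_i$ of the approximating family, not just an $l$ for each $\fa_i$ separately, which is why the paper asserts only that an \emph{effective} estimate of $l$ in Conjecture \ref{cnj:adic} would imply Conjecture \ref{cnj:stability}.
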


\begin{remark}\label{rmk:inequality}
One inequality is easy in both conjectures. One has $\mld_{P_K}(X_K,\sfa)\ge\mld_P(X,\fa_i)$ in Conjecture \ref{cnj:stability} by Lemma \ref{lem:mld}, and $\mld_P(X,\fa)\ge\mld_P(X,\fb)$ in Conjecture \ref{cnj:adic} by \cite[Remark 2.5.3]{K13-1}. In particular, these conjectures hold in the case when $(X_K,\sfa)$ (resp.\ $(X,\fa)$) is not lc.
\end{remark}

\begin{proposition}\label{prp:reduction}
Conjecture \textup{\ref{cnj:stability}} implies Conjectures \textup{\ref{cnj:acc}$\mathrm{'}$} and \textup{\ref{cnj:adic}}.
\end{proposition}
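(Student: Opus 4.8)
The plan is to establish the two implications separately, each by contradiction using the generic-limit formalism of Appendix \ref{sec:appendix}; in both cases Conjecture \ref{cnj:stability} enters in order to force an invariant to be eventually constant along a putative bad sequence.

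\emph{Conjecture \textup{\ref{cnj:stability}} $\Rightarrow$ Conjecture \textup{\ref{cnj:adic}}.} Suppose Conjecture \ref{cnj:adic} fails for $\fa=\prod_{j=1}^e\fa_j^{r_j}$ on $X=\Spec R$: for each $l$ there is $\fb^{(l)}=\prod_j(\fb_j^{(l)})^{r_j}$ with $\fa_j+\fm^l=\fb_j^{(l)}+\fm^l$ for all $j$ and $\mld_P(X,\fb^{(l)})<\mld_P(X,\fa)$, the strictness coming from Remark \ref{rmk:inequality}. Apply the generic-limit construction (with respect to a suitable family of approximations) to $S=\{(\fb_1^{(l)},\dots,\fb_e^{(l)})\}_l$: for each $n$ the truncation $\fb_j^{(l)}+\fm^n$ equals $\fa_j+\fm^n$ for all $l\ge n$, so the generic limit $\sfa=\prod_j\sfa_j^{r_j}$ has $\sfa_j=\fa_jR_K$, and Corollary \ref{cor:extension}(\ref{itm:extension_field}) gives $\mld_{P_K}(X_K,\sfa)=\mld_P(X,\fa)$. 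By Conjecture \ref{cnj:stability}, $\mld_P(X,\fb^{(l)})=\mld_{P_K}(X_K,\sfa)=\mld_P(X,\fa)$ for infinitely many $l$, a contradiction.

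\emph{Conjecture \textup{\ref{cnj:stability}} $\Rightarrow$ Conjecture \textup{\ref{cnj:acc}$\mathrm{'}$}.} Suppose the latter fails for some $d$, $I$, $J$ and pick germs $P_i\in(X_i,\fa_i=\prod_j\fa_{ij}^{r_{ij}})$ with $\dim X_i=d$, $\Coef_{P_i}\fa_i\subset I$ and $\mld_{P_i}(X_i,\fa_i)\in J$ realising the failure. By Corollary \ref{cor:extension}(\ref{itm:extension_compl}) we may take $X_i=\Spec R$, $P_i=P$; discard the trivial factors. From $\mld_P(X,\fa_i)\ge0$ the blow-up of $P$ forces $\ord_P\fa_i\le d$, and $\inf I>0$ since $I$ satisfies DCC, so the number of factors is at most $d/\inf I$; passing to a subsequence, this number is a constant $e$ and, since a sequence in a DCC set has a non-decreasing subsequence, $r_{ij}\nearrow r_j\in(0,d]$ for each $j$. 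As Conjecture \ref{cnj:acc}$\mathrm{'}$ holds for finite $I$ by \cite{K14}, a genuine failure forces $\bigcup_i\Coef_P\fa_i$ to be infinite, so we may assume $r_{ij_0}<r_{j_0}$ for a fixed $j_0$ and all $i$. Form the generic limit $\sfa=\prod_j\sfa_j^{r_j}$ of $\{(\fa_{i1},\dots,\fa_{ie})\}_i$ with the fixed exponents $r_j$; Conjecture \ref{cnj:stability} gives $m:=\mld_{P_K}(X_K,\sfa)=\mld_P(X,\prod_j\fa_{ij}^{r_j})$ for infinitely many $i$.

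It remains to compare $\mld_P(X,\fa_i)$ with $m$ from both sides. Because every $\fa_{ij}$ is nontrivial at $P$, any divisor $E$ over $X$ with $c_X(E)=P$ satisfies $\ord_E\fa_{ij}\ge\ord_E\fm\ge1$; applied to a divisor computing $\mld_P(X,\fa_i)$ this yields $\mld_P(X,\prod_j\fa_{ij}^{r_j})\le\mld_P(X,\fa_i)-(r_{j_0}-r_{ij_0})$, hence $\mld_P(X,\fa_i)>m$. For the opposite estimate, transport a divisor computing $\mld_{P_K}(X_K,\sfa)$ to a divisor $E_i$ over $X$ with the same discrepancy and the same orders $\ord_{E_i}\fa_{ij}=\ord_E\sfa_j=:o_j$ (the divisorial correspondence underlying Lemma \ref{lem:mld}); then $\mld_P(X,\fa_i)\le a_{E_i}(X,\fa_i)=m+\sum_j(r_j-r_{ij})o_j\to m$. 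Thus $\mld_P(X,\fa_i)$ lies in $J$, stays strictly above $m$, yet converges to $m$, producing an infinite strictly decreasing sequence in $J$ and contradicting DCC (when $m=-\infty$ one argues similarly, the transported divisor then showing $(X,\fa_i)$ is not lc for large $i$, against $\mld_P(X,\fa_i)\ge0$). Feeding the finite set of surviving coefficients into the finite-$I$ case of \cite{K14} produces a finite $J_0$, which completes the proof. I expect the main obstacle to be exactly this last point: securing the upper bound $\mld_P(X,\fa_i)\to m$ requires uniform control, over $i$, of the orders of the divisor computing $m$, which is where the internal structure of the generic limit — and not merely Conjecture \ref{cnj:stability} used as a black box — is needed.
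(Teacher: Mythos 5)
Your proposal is correct and follows essentially the same route as the paper: the adic implication by observing that the generic limit of the $\fb^{(l)}$ is $\fa R_K$, and the ACC implication via the two-sided estimate $M\le\mld_P(X,\fa_i)\le M+\sum_j(r_j-r_{ij})\ord_{E_K}\sfa_j$, where the upper bound comes from the transported divisor supplied by Lemma \ref{lem:mld} (the paper's inequality (\ref{eqn:mM})) and the lower bound from Conjecture \ref{cnj:stability} — exactly the point you flag, and which the paper resolves the same way. The only cosmetic differences are that you dispose of the finite-coefficient case by citing the finite-$I$ result of \cite{K14} and phrase the conclusion as a violation of the DCC of $J$, whereas the paper proves the ACC of the coefficient set and of the mld set directly from the same estimate without appealing to \cite{K14}.
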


\begin{proof}
Firstly, we shall see Conjecture \ref{cnj:acc}$\mathrm{'}$. It was observed by Musta\c{t}\u{a} and sketched in \cite[Remark 2.5.1]{K13-1}. Let $\{\fa_i=\prod_{j=1}^{e_i}\fa_{ij}^{r_{ij}}\}_{i\in N}$ be an arbitrary collection of $\bR$-ideals on $X=\Spec R$ such that $\fa_{ij}$ are non-trivial at $P$, $r_{ij}\in I$ and $m_i:=\mld_P(X,\fa_i)\in J$. Then $\sum_{j=1}^{e_i}r_{ij}\le\ord_E\fa_i\le a_E(X)=d$ for the divisor $E$ obtained by the blow-up of $X$ at $P$, since $m_i\ge0$. The $I$ has the minimum, say $\iota>0$, so $e_i\le\iota^{-1}d$. By Corollary \ref{cor:extension} and Remark \ref{rmk:cc}, it is enough to show that both the subsets $\bigcup_{i\in N}\Coef_P\fa_i$ of $I$ and $\bigcup_{i\in N}\{m_i\}$ of $J$ satisfy the ACC. We may replace $N$ with a countable subset $\bN$ on which $e_i$ is constant, say $e$, such that the sequences $\{r_{ij}\}_{i\in\bN}$ for $1\le j\le e$ and $\{m_i\}_{i\in\bN}$ are non-decreasing. By $r_{ij}\le d$ and $m_i\le d$, these sequences have limits $r_j:=\lim_ir_{ij}$ and $m:=\lim_im_i$. It suffices to prove $r_{ij}=r_j$ and $m_i=m$ for some $i$.

For the collection $S=\{(\fa_{i1},\ldots,\fa_{ie})\}_{i\in\bN}$ of $e$-tuples of ideals in $R$, we take a family $\cF=(Z_l,(\bar\sfa_j(l))_j,I_l,s_l,t_{l+1})_{l\ge l_0}$ of approximations of $S$ and the generic limit $(\sfa_1,\ldots,\sfa_e)$ of $S$ defined in $R_K$ with respect to $\cF$ as in Lemma \ref{lem:mld}, where $E_K\in\cD_{X_K}$ computing $M:=\mld_{P_K}(X_K,\prod_j\sfa_j^{r_j})$ is fixed. It is extended to $E_l$ over $X\times_{\Spec k}Z_l$, and for $i\in I_l$ with $z=s_l(i)$ we have $M=\mld_P(X,\prod_j(\fa_{ij}+\fm^l)^{r_j})=a_{(E_l)_z}(X,\prod_j(\fa_{ij}+\fm^l)^{r_j})$ and $\ord_{E_K}\sfa_j=\ord_{(E_l)_z}(\fa_{ij}+\fm^l)<l$ using (\ref{itm:family_aij}) in Definition \ref{def:family}. Hence $\ord_{E_K}\sfa_j=\ord_{(E_l)_z}\fa_{ij}$ and
\begin{align}\label{eqn:mM}
m_i\le a_{(E_l)_z}(X,\prod_j\fa_{ij}^{r_{ij}})&=a_{(E_l)_z}(X,\prod_j\fa_{ij}^{r_j})+\sum_j(r_j-r_{ij})\ord_{(E_l)_z}\fa_{ij}\\
\nonumber&=M+\sum_j(r_j-r_{ij})\ord_{E_K}\sfa_j.
\end{align}

By Conjecture \ref{cnj:stability}, $M=\mld_P(X,\prod_j\fa_{ij}^{r_j})\le m_i$ for any $i\in I_l$ after replacing $\cF$ with a subfamily. With (\ref{eqn:mM}), we obtain
\begin{align*}
M\le m_i\le M+\sum_j(r_j-r_{ij})\ord_{E_K}\sfa_j.
\end{align*}
The right-hand side converges to $M$, whence $m_i=m=M$. Then $\mld_P(X,\prod_j\fa_{ij}^{r_{ij}})=\mld_P(X,\prod_j\fa_{ij}^{r_j})$, so $r_{ij}=r_j$.

Secondly, we shall see Conjecture \ref{cnj:adic}. Suppose the contrary. Then for every $i\in\bN$, there exists an $\bR$-ideal $\fb_i=\prod_j\fb_{ij}^{r_j}$ on $X$ such that $\fa_j+\fm^i=\fb_{ij}+\fm^i$ for all $j$ but $\mld_P(X,\fa)\ne\mld_P(X,\fb_i)$. Take a family $\cF=(Z_l,(\bar\sfb_j(l))_j,I_l,s_l,t_{l+1})_{l\ge l_0}$ of approximations of $S=\{(\fb_{ij})_j\}_{i\in\bN}$ and the generic limit $(\sfb_j)_j$ of $S$ defined in $R_K$ with respect to $\cF$. Then for $l\ge l_0$, $\bar\sfb_j(l)_zR=\fb_{ij}+\fm^l=\fa_j+\fm^l$ for $i\in I_l$ with $z=s_l(i)$ satisfying $i\ge l$, and such $z$ form a dense subset of $Z_l$. This implies $\bar\sfb_j(l)=((\fa_j+\fm^l)\cap\bar{R})\otimes_k\cO_{Z_l}$, whence $\bar\sfb_j(l)_K=(\fa_jR_K+\fm_K^l)\cap\bar{R}_K$. Then $\sfb_j=\varprojlim_l\bar\sfb_j(l)_K=\fa_jR_K$ by Remark \ref{rmk:limit}, so $\mld_{P_K}(X_K,\prod_j\sfb_j^{r_j})=\mld_P(X,\fa)$ by Corollary \ref{cor:extension}. By Conjecture \ref{cnj:stability}, we have $\mld_{P_K}(X_K,\prod_j\sfb_j^{r_j})=\mld_P(X,\fb_i)$ for infinitely many $i$, that is, $\mld_P(X,\fa)=\mld_P(X,\fb_i)$, which is absurd.
\end{proof}

\begin{remark}\label{rmk:reduction}
Proposition \ref{prp:reduction} has the refinement that for fixed $d$ and $a\ge0$,
\begin{enumerate}
\item
Conjecture \textup{\ref{cnj:stability}} for $d$ with $\mld_{P_K}(X_K,\sfa)>a$ (resp.\ $\ge a$) implies Conjecture \textup{\ref{cnj:acc}$\mathrm{'}$} for $d$ with $J\subset(a,d]$ (resp.\ $\subset[a,d]$), and
\item
Conjecture \textup{\ref{cnj:stability}} for $d$ with $\mld_{P_K}(X_K,\sfa)=a$ implies Conjecture \textup{\ref{cnj:adic}} for $d$ with $\mld_P(X,\fa)=a$.
\end{enumerate}
This is obvious by the above proof. Note that (\ref{eqn:mM}) implies $m\le M$.
\end{remark}

\begin{remark}
Theorem \ref{thm:adic0} gives Conjecture \ref{cnj:adic} in the case when $\mld_P(X,\fa)=0$, and then its Corollary \ref{cor:adic0} gives Conjecture \ref{cnj:stability} in the case when $\mld_{P_K}(X_K,\sfa)=0$. The order of this logic is opposite to Proposition \ref{prp:reduction}. We expect that an effective estimate of $l$ in Conjecture \ref{cnj:adic} implies Conjecture \ref{cnj:stability}.
\end{remark}

Theorem \ref{thm:adic0} is reduced to the corresponding statement \cite[Theorem 1.4]{dFEM10} on a variety by the property that the log canonical threshold for an ideal in $\widehat{\cO_{Y,Q}}$ is approximated by those for ideals in $\cO_{Y,Q}$. This property for the minimal log discrepancy on $X$ is a special case of Conjecture \ref{cnj:stability}, so we do not know how to reduce Conjecture \ref{cnj:adic} to its variety version. The version of Conjecture \ref{cnj:adic} for a germ $Q\in(Y,\Delta,\fa)$ of a triplet on a variety $Y$ holds when (i) $(Y,\Delta,\fa)$ is klt \cite[Theorem 2.6]{K13-1}, (ii) $Y$ is a surface \cite{K13-2}, or (iii) $Y$ is toric and $Q$, $\Delta$, $\fa$ are torus invariant \cite[Theorem 1.8]{N-Arxiv}.

The variety version of Theorem \ref{thm:adic0} is globalised.

\begin{theorem}\label{thm:global}
Let $(Y,\Delta,\fa=\prod_j\fa_j^{r_j})$ be a triplet on a variety $Y$ and $Z$ an irreducible closed subset of $Y$. Suppose $\mld_{\eta_Z}(Y,\Delta,\fa)=0$ and it is computed by $E\in\cD_Y$. Then there exists an open subset $Y'$ of $Y$ containing $\eta_Z$ such that if an $\bR$-ideal $\fb=\prod_j\fb_j^{r_j}$ on $Y'$ satisfies $\fa_j|_{Y'}+\fp_j=\fb_j+\fp_j$ for all $j$, where $\fp_j=\{u\in\cO_{Y'}\mid\ord_Eu>\ord_E\fa_j\}$, then $(Y',\Delta|_{Y'},\fb)$ is lc about $Z|_{Y'}$ and $\mld_{\eta_Z}(Y',\Delta|_{Y'},\fb)=0$.
\end{theorem}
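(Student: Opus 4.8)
The plan is to deduce Theorem~\ref{thm:global} from its local counterpart, \cite[Theorem 1.4]{dFEM10} (the variety version of what the paper calls Theorem~\ref{thm:adic0}), by spreading out the local statement at the generic point $\eta_Z$ over an open neighbourhood. First I would pass to the local ring $\cO_{Y,\eta_Z}$: the hypothesis says $\mld_{\eta_Z}(Y,\Delta,\fa)=0$ is computed by a fixed $E\in\cD_Y$, so the local version of the adic-semicontinuity statement produces an integer $l$ (equivalently the ideal $\fp_j=\{u\mid\ord_Eu>\ord_E\fa_j\}$, which at $\eta_Z$ contains a power of the maximal ideal) such that any $\bR$-ideal agreeing with $\fa$ modulo $\fp_j$ at $\eta_Z$ still has $\mld_{\eta_Z}=0$. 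The point of using the valuation ideals $\fp_j$ rather than symbolic or $\fm$-adic powers is that this is exactly the form in which \cite{dFEM10} phrases the stable range, and it globalises cleanly: $\fp_j$ is the ideal sheaf cut out by the condition $\ord_E(\,\cdot\,)>\ord_E\fa_j$, which makes sense on all of $Y$ (or on a suitable model resolving $E$).

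The next step is the spreading-out. A single divisor $E$ over $Y$, a finite list of ideals $\fa_j$, and the equalities/inequalities of orders along $E$ are all constructible data, so there is an open $Y'\ni\eta_Z$ over which: $E$ is realised by an honest prime divisor on a log resolution of $(Y',\Delta|_{Y'},\fa|_{Y'})$, the centre $c_{Y'}(E)\supset Z|_{Y'}$, the orders $\ord_E\fa_j$ stay constant, and $E$ still computes $\mld_{\eta_Z}$. One then invokes the local conclusion at every point of $Z|_{Y'}$: for a general closed point $P\in Z|_{Y'}$ one has $\mld_P=\mld_{\eta_Z}+\dim Z$ by the Ambro-type relation quoted in Section~\ref{sec:singularities}, so it is enough to control $\mld_{\eta_Z}$, and the hypothesis $\fa_j|_{Y'}+\fp_j=\fb_j+\fp_j$ transfers verbatim to the stalk at $\eta_Z$. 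By the local theorem, $(Y',\Delta|_{Y'},\fb)$ is then lc at $\eta_Z$ with $\mld_{\eta_Z}=0$; shrinking $Y'$ once more so that the non-lc locus (which does not meet $\eta_Z$) is removed gives log canonicity about all of $Z|_{Y'}$.

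The main obstacle will be making the spreading-out of the divisor $E$ precise: $E$ lives on some birational model $g\colon Y_1\to Y$, and I must ensure that over the chosen $Y'$ the restriction $Y_1\times_Y Y'\to Y'$ still has $E$ as a divisor whose log discrepancy and whose orders $\ord_E\fa_j$, $\ord_E\fb_j$ are the generic-point values — and crucially that the \emph{stable-range condition} used by \cite{dFEM10}, which is an inequality $\ord_E u>\ord_E\fa_j$, is preserved, not just the equalities. This is where the choice of $\fp_j$ as the exact valuation ideal (rather than an $\fm$-adic truncation) pays off, since $\fa_j+\fp_j=\fb_j+\fp_j$ already encodes that $\fb_j$ and $\fa_j$ have the same $E$-order and agree to the precision demanded by the local theorem. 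One subtlety is that the stable range in the local statement may a priori depend on $P$; to get a single $Y'$ I would use that the local theorem's conclusion is insensitive to which general $P\in Z$ is chosen, reducing everything to the generic point, and then let constructibility supply the uniform open set. The remaining verifications — that $\mld_{\eta_Z}(Y',\Delta|_{Y'},\fb)\le 0$ follows because $E$ still computes a log discrepancy $\le 0$ for $\fb$, while $\ge 0$ is the lc conclusion — are routine.
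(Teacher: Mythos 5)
Your overall strategy (log resolution, spreading out over an open $Y'$, then invoking \cite[Theorem 1.4]{dFEM10}) is the right one, but there is a genuine gap in how you obtain log canonicity \emph{about all of} $Z|_{Y'}$. Your argument controls the situation only at $\eta_Z$, equivalently at a \emph{general} closed point of $Z$; this yields $\mld_{\eta_Z}(Y',\Delta|_{Y'},\fb)=0$ but says nothing at the special closed points of $Z|_{Y'}$. Your proposed repair --- ``shrinking $Y'$ once more so that the non-lc locus is removed'' --- breaks the order of quantifiers: the non-lc locus of $(Y',\Delta|_{Y'},\fb)$ depends on $\fb$, whereas the theorem requires a single $Y'$ chosen \emph{before} $\fb$ is given. (Even for one fixed $\fb$, removing the non-lc locus also removes the part of $Z$ it meets, so one would be proving a weaker statement about a smaller $Z$.)

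The missing idea is a uniform pointwise statement at \emph{every} closed point $Q\in Z'$. The paper takes a log resolution $f\colon W\to Y$ of $(Y,\Delta,\fa\cI_Z)$ on which $E$ is a divisor, and uses generic smoothness to choose $Y'$ so that every stratum of the relevant snc divisor mapping into $Z'=Z|_{Y'}$ is smooth and surjective over $Z'$. Then for each closed point $Q\in Z'$ the blow-up of $W$ along a component of $E\cap f^{-1}(Q)$ produces a divisor $G_Q$ computing $\mld_Q(Y,\Delta,\fa\fm_Q^{\dim Z})=0$, with $\ord_{G_Q}\fa_j=\ord_E\fa_j$ and $\ord_{G_Q}u\ge\ord_Eu$. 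These two order comparisons show that the single ideal $\fp_j$ defined by $E$ is contained in the valuation ideal attached to $G_Q$, so the hypothesis $\fa_j|_{Y'}+\fp_j=\fb_j+\fp_j$ feeds into \cite[Theorem 1.4]{dFEM10} at every $Q$ simultaneously, giving $\mld_Q(Y',\Delta|_{Y'},\fb\fm_Q^{\dim Z})=0$ for all $Q\in Z'$ and hence log canonicity about $Z'$ for the fixed $Y'$. Without some such device producing a computing divisor at each point of $Z'$ with uniformly controlled orders, your argument does not close.
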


\begin{proof}
Take a log resolution $f\colon W\to Y$ of $(Y,\Delta,\fa\cI_Z)$, where $\cI_Z$ is the ideal sheaf of $Z$, such that $E$ is realised as a divisor on $W$. Then $F:=\Exc f\cup\Supp\Delta_W\cup\Cosupp\fa\cI_Z\cO_Y$ is an snc divisor $\sum_iF_i$, where $\Delta_W$ is the strict transform of $\Delta$. By generic smoothness \cite[Corollary III.10.7]{Ha77}, there exists an open subset $Y'$ of $Y$ containing $\xi_Z$ such that if the restriction $S'=S|_{f^{-1}(Y')}$ of a stratum $S$ of $\sum_iF_i$ satisfies $S'\neq\emptyset$ and $f(S')\subset Z'=Z|_{Y'}$, then $S'\to Z'$ is smooth and surjective. Then for any $Q\in Z'$, $\mld_Q(Y,\Delta,\fa\fm_Q^{\dim Z})=0$ for the maximal ideal sheaf $\fm_Q$, and it is computed by the divisor $G_Q$ obtained by the blow-up of $W$ along a component of $E\cap f^{-1}(Q)$. Since $\ord_{G_Q}\fa_j=\ord_E\fa_j$ and $\ord_{G_Q}u\ge\ord_Eu$ for $u\in\cO_{Y'}$, we have $\mld_Q(Y',\Delta|_{Y'},\fb\fm_Q^{\dim Z})=0$ for $\fb$ in Theorem \ref{thm:global} by \cite[Theorem 1.4]{dFEM10} (its proof works for triplets). Hence $(Y',\Delta|_{Y'},\fb)$ is lc about $Z'$, and $\mld_{\eta_Z}(Y',\Delta|_{Y'},\fb)=0$ by $a_E(Y',\Delta|_{Y'},\fb)=0$.
\end{proof}

\begin{corollary}
Let $(Y,\Delta,\fa=\prod_j\fa_j^{r_j})$ be an lc triplet on a variety $Y$ and $Z$ a closed subset of $Y$ with ideal sheaf $\cI_Z$. Then there exists an integer $l$ such that if an $\bR$-ideal $\fb=\prod_j\fb_j^{r_j}$ on $Y$ satisfies $\fa_j+\cI_Z^l=\fb_j+\cI_Z^l$ for all $j$, then $(Y,\Delta,\fb)$ is lc about $Z$.
\end{corollary}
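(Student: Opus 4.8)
The plan is to deduce the corollary from the semilocal Theorem~\ref{thm:global} by a Noetherian induction on $\dim Z$. First I would reduce to the case that $Z$ is irreducible: if $Z=Z_1\cup\cdots\cup Z_r$ with the $Z_i$ irreducible and $l_i$ works for $Z_i$, then $l:=\max_i l_i$ works for $Z$, because $\cI_Z\subset\cI_{Z_i}$ gives $\cI_Z^l\subset\cI_{Z_i}^{l_i}$, so the congruence modulo $\cI_Z^l$ implies the one modulo $\cI_{Z_i}^{l_i}$, and $(Y,\Delta,\fb)$ being lc about every $Z_i$ is the same as being lc about $Z$. The case $Z=Y$ is trivial since then $\cI_Z=0$ forces $\fb=\fa$, so we may assume $Z$ irreducible with $\cI_Z\ne0$.

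Now one argues by induction on $\dim Z$, the empty case being vacuous. In the inductive step, let $c_0\ge0$ be the log canonical threshold of $\cI_Z$ with respect to $(Y,\Delta,\fa)$ at $\eta_Z$; it is finite since $\cI_Z\ne0$, and $\fa':=\fa\cI_Z^{c_0}$ (read as $\fa$ when $c_0=0$) satisfies $\mld_{\eta_Z}(Y,\Delta,\fa')=0$, computed by some divisor $E\in\cD_Y$ coming from a log resolution, necessarily with $c_Y(E)=Z$. Apply Theorem~\ref{thm:global} to $(Y,\Delta,\fa')$, $Z$ and $E$, obtaining an open $Y'\ni\eta_Z$ together with the ideals $\fp_j=\{u\in\cO_{Y'}\mid\ord_Eu>\ord_E\fa_j\}$ attached to the factors $\fa_j$ (and the analogous ideal attached to the extra factor $\cI_Z$). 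The point is that $c_Y(E)=Z$ forces $\ord_E\cI_Z\ge1$, hence $\ord_E\cI_Z^l\ge l$, so once $l>\max_j\ord_E\fa_j$ one has $\cI_Z^l|_{Y'}\subset\fp_j$; restricting $\fa_j+\cI_Z^l=\fb_j+\cI_Z^l$ to $Y'$ and enlarging by $\fp_j$ then gives $\fa_j|_{Y'}+\fp_j=\fb_j|_{Y'}+\fp_j$. Since the factor $\cI_Z$ is common to $\fa'$ and to $\fb':=\fb\cI_Z^{c_0}$, Theorem~\ref{thm:global} yields that $(Y',\Delta|_{Y'},\fb'|_{Y'})$ is lc about $Z\cap Y'$; as $\mld_Q(Y,\Delta,\fb)\ge\mld_Q(Y,\Delta,\fb')$ and log canonicity is local, $(Y,\Delta,\fb)$ is lc at every point of $Z\cap Y'$.

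It then remains to cover $Z\setminus Y'$, which is a proper closed subset of the irreducible $Z$, hence of strictly smaller dimension. Applying the inductive hypothesis to the (still lc) triplet $(Y,\Delta,\fa)$ and the closed subset $Z\setminus Y'$ produces an integer $l''$ with the desired property there, and $\cI_Z\subset\cI_{Z\setminus Y'}$ shows that any $l\ge l''$ transfers the congruence from $\cI_Z^l$ to $\cI_{Z\setminus Y'}^{l''}$. Taking $l\ge\max\{l'',\ 1+\max_j\ord_E\fa_j\}$, we conclude that $(Y,\Delta,\fb)$ is lc at every point of $(Z\cap Y')\cup(Z\setminus Y')=Z$, i.e.\ lc about $Z$.

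The one genuinely delicate point is that Theorem~\ref{thm:global} is only semilocal: it controls the perturbation merely on a shrinking open $Y'$ and only relative to the subtle ideals $\fp_j$, not directly relative to $\cI_Z^l$. The latter issue is dealt with by insisting that the computing divisor $E$ have centre exactly $Z$, so that $\cI_Z^l\subset\fp_j$ for large $l$; the former forces the Noetherian induction, where one must make sure that the finitely many thresholds contributed by the lower-dimensional pieces $Z\setminus Y'$ are dominated by a single $l$. Finally, the passage through the log canonical threshold — replacing $\fa$ by $\fa\cI_Z^{c_0}$ inside each step — is what reconciles the plain log canonical hypothesis of the corollary with the hypothesis $\mld_{\eta_Z}=0$ demanded by Theorem~\ref{thm:global}.
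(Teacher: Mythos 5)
Your proposal is correct, and it supplies an argument that the paper itself omits: the corollary is stated immediately after Theorem \ref{thm:global} with no proof, and the subsequent remark even concedes that the proof ``should have been written'' and that the naive estimate of $l$ in \cite[Remark 2.4.1]{K13-1} fails once $Z$ has positive dimension. Your Noetherian induction on $\dim Z$ is exactly the device needed to repair that. The two delicate points are both handled correctly: first, the divisor computing the log canonical threshold of $\cI_Z$ at $\eta_Z$ has $\ord_E\cI_Z>0$ and $\eta_Z\in c_Y(E)$, hence (for $Z$ irreducible) $c_Y(E)=Z$, which both puts you in the hypothesis of Theorem \ref{thm:global} for $\fa'=\fa\cI_Z^{c_0}$ and gives $\ord_E\cI_Z\ge1$, so that $\cI_Z^l|_{Y'}\subset\fp_j$ once $l>\max_j\ord_E\fa_j$ and the congruence modulo $\cI_Z^l$ upgrades to the congruence modulo $\fp_j$ demanded by the theorem; second, since $Y'$ depends only on $(Y,\Delta,\fa)$, $Z$ and $E$ (not on $\fb$), the leftover $Z\setminus Y'$ is a proper closed subset of the irreducible $Z$, so the inductive hypothesis applies and the finitely many integers produced along the way can be dominated by a single $l$. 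The passage from $\fb'=\fb\cI_Z^{c_0}$ back to $\fb$ via $a_E(Y,\Delta,\fb)\ge a_E(Y,\Delta,\fb')$ is also fine. The only presentational quibble is that the reduction to irreducible $Z$ must be understood as taking place inside each step of the induction (the statement being inducted on is the one for arbitrary closed subsets of bounded dimension, since $Z\setminus Y'$ need not be irreducible), but your phrasing already makes this workable.
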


\begin{remark}
The author should have written the proof after \cite[Theorem 2.4]{K13-1}. The estimate of $l$ in \cite[Remark 2.4.1]{K13-1} is an error unless $Z$ is a closed point (so is that of $l_1$ in \cite[Lemma 3.1]{K13-1}).
\end{remark}

\section{The klt and plt cases}\label{sec:kltplt}
In this section, we settle Conjecture \ref{cnj:stability} in the klt case, and in the plt case whose lc centre has an isolated singularity. We keep the notation in Appendix \ref{sec:appendix}, so $P\in X=\Spec R$ with $R=k[[x_1,\ldots,x_d]]$ and $P_K\in X_K=\Spec R_K$ with $R_K=K[[x_1,\ldots,x_d]]$.

\subsection{The klt case}
\begin{theorem}\label{thm:klt}
Conjecture \textup{\ref{cnj:stability}} holds in the case when $(X_K,\sfa)$ is klt.
\end{theorem}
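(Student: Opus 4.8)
The plan is to reduce to the variety version of the ideal-adic semicontinuity of minimal log discrepancies, which in the klt case is already available as \cite[Theorem 2.6]{K13-1}, and then feed this into the generic-limit machinery exactly as in Remark \ref{rmk:inequality} and Lemma \ref{lem:mld}. By Remark \ref{rmk:inequality} we already have $\mld_{P_K}(X_K,\sfa)\ge\mld_P(X,\fa_i)$ for all $i$, so it suffices to prove the reverse inequality $\mld_{P_K}(X_K,\sfa)\le\mld_P(X,\fa_i)$ for $i$ in some $I_l$ after shrinking $\cF$. Fix $E_K\in\cD_{X_K}$ computing $M:=\mld_{P_K}(X_K,\sfa)$; since $(X_K,\sfa)$ is klt we have $M>0$, and $\ord_{E_K}\sfa_j<M^{-1}\cdot(\text{const})$ is bounded, so in particular $\ord_{E_K}\sfa_j$ is finite for every $j$. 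The first step is to realise $E_K$ on a log resolution and descend the whole picture to a model over one of the approximating schemes $Z_l$: by the construction of the generic limit (Definition \ref{def:family}, Lemma \ref{lem:mld}) the resolution, the divisor $E_K$, and the orders $\ord_{E_K}\sfa_j$ all spread out to a divisor $E_l$ over $X\times_kZ_l$ with $\ord_{(E_l)_z}(\fa_{ij}+\fm^l)=\ord_{E_K}\sfa_j$ for $z=s_l(i)$, provided $l$ is taken large enough that $\ord_{E_K}\sfa_j<l$ for all $j$.

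The second and main step is to invoke the variety version of Conjecture \ref{cnj:adic} in the klt case. Concretely, applying \cite[Theorem 2.6]{K13-1} (or its globalised form in the spirit of Theorem \ref{thm:global}) to a log resolution of $(X_K,\sfa)$ — more precisely to an algebraic model over $Z_l$ — one gets an integer $l$ such that $\mld_P(X,\fa_i)=\mld_P(X,\prod_j(\fa_{ij}+\fm^l)^{r_j})$ whenever $\fa_{ij}$ and the truncation agree modulo a suitable power of the ideal defining the relevant centre; combined with the identity $\mld_P(X,\prod_j(\fa_{ij}+\fm^l)^{r_j})=M$ coming from the generic-limit construction for $i\in I_l$, this yields $\mld_P(X,\fa_i)=M$ for such $i$. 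The point where klt is essential is exactly here: in the klt case the relevant "conductor-type" ideal $\fp_j=\{u\mid \ord_E u>\ord_E\fa_j\}$ is $\fm$-primary (because the centre of $E$ can be taken to be the closed point, the singularity being isolated in the klt situation of \cite{K13-1}), so truncating by $\fm^l$ for large $l$ is enough; this is the feature that fails for a positive-dimensional lc centre and is precisely what forces the separate treatment in Sections \ref{sec:connect} and \ref{sec:threefold}.

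The third step is bookkeeping: one must check that the $l$ produced by \cite[Theorem 2.6]{K13-1} can be taken uniform enough to lie in the index range of the family $\cF$, i.e.\ that after passing to the subfamily $\{I_l\}_{l\ge l_1}$ for a single large $l_1$ the equality $\mld_{P_K}(X_K,\sfa)=\mld_P(X,\fa_i)$ holds for every $i\in I_{l_1}$. This is automatic because the hypotheses of \cite[Theorem 2.6]{K13-1} depend only on $(X_K,\sfa)$ and its fixed log resolution, not on $i$, so a single $l_1$ works, and by density of $s_l(I_l)$ in $Z_l$ (Definition \ref{def:family}) this is nonvacuous. The expected main obstacle is purely expository rather than mathematical: making precise the descent of the log resolution of $(X_K,\sfa)$ and of the divisor $E_K$ to a scheme over $Z_l$, and matching the truncation exponent in \cite[Theorem 2.6]{K13-1} with the index $l$ of the approximation family. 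Once these identifications are set up, the inequality $\mld_{P_K}(X_K,\sfa)\le\mld_P(X,\fa_i)$ drops out, and together with Remark \ref{rmk:inequality} this completes the proof.
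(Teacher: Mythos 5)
Your setup (Remark \ref{rmk:inequality} for one inequality, Lemma \ref{lem:mld} to spread $E_K$ out over $Z_l$ and get $\mld_P(X,\prod_j(\fa_{ij}+\fm^l)^{r_j})=\mld_{P_K}(X_K,\sfa)$) matches the paper, but the core of your argument has a genuine gap. You propose to conclude via \cite[Theorem 2.6]{K13-1}, i.e.\ the ideal-adic semi-continuity of minimal log discrepancies in the klt case, to get $\mld_P(X,\fa_i)=\mld_P(X,\prod_j(\fa_{ij}+\fm^l)^{r_j})$. But \cite[Theorem 2.6]{K13-1} is a statement about a germ on a \emph{variety}, whereas the pair $(X,\fa_i)$ lives on $X=\Spec k[[x_1,\ldots,x_d]]$ and the ideals $\fa_{ij}$ are genuinely formal --- only their truncations $\fa_{ij}+\fm^l$ descend to $\bA_k^d$. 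The paper explicitly flags this obstruction: reducing Conjecture \ref{cnj:adic} (the formal power series version) to its variety version would require approximating minimal log discrepancies on $X$ by those of algebraic models, and that approximation property ``is a special case of Conjecture \ref{cnj:stability}''. So your second step is essentially circular: the transfer you invoke is a form of the statement being proved. (Your parenthetical justification that $\fp_j$ is $\fm$-primary ``because the singularity is isolated in the klt situation'' also conflates the hypotheses of Theorem \ref{thm:adic0} with those of \cite[Theorem 2.6]{K13-1}; $\fp_j$ does not appear in the latter.)

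The paper avoids this by never invoking mld semi-continuity beyond the $\mld=0$ (log canonicity) case, which \emph{is} available over the formal power series ring (Theorem \ref{thm:adic0}, Corollary \ref{cor:adic0}). Concretely, one fixes $t>0$ with $(X_K,\sfa^{1+t})$ lc, transfers this to $(X,\fa_i^{1+t})$ lc via Corollary \ref{cor:adic0}, and then argues divisor by divisor: for $E\in\cD_X$ with centre $P$, either $\ord_E\fa_{ij}<l$ for all $j$, in which case $a_E(X,\fa_i)=a_E(X,\prod_j(\fa_{ij}+\fm^l)^{r_j})\ge\mld_{P_K}(X_K,\sfa)$ by the approximation step you already have; or $\ord_E\fa_{ij}\ge l$ for some $j$, in which case the log canonicity of $(X,\fa_i^{1+t})$ gives $a_E(X,\fa_i)\ge t\,r_j\ord_E\fa_{ij}\ge t\,r_j\,l\ge\mld_{P_K}(X_K,\sfa)$ once $l$ is chosen with $l\ge(tr_j)^{-1}\mld_{P_K}(X_K,\sfa)$ for all $j$. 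This dichotomy on $\ord_E\fa_{ij}$ is the missing idea in your proposal; without it (or a proof of the formal power series version of \cite[Theorem 2.6]{K13-1}), the argument does not close.
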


\begin{proof}
It is shown similarly to \cite[Theorem 2.6]{K13-1}. By Remark \ref{rmk:inequality}, it suffices to show that after replacing $\cF$ with a subfamily,
\begin{align}\label{eqn:klt_inequality}
a_E(X,\fa_i)\ge\mld_{P_K}(X_K,\sfa)
\end{align}
for any $i\in I_l$ and $E\in\cD_X$ with centre $P$.

Take a subfamily in Lemma \ref{lem:mld} so that $\mld_P(X,\prod_j(\bar\sfa_j(l)_zR)^{r_j})=\mld_{P_K}(X_K,\sfa)$ for $z\in Z_l$. Then for $i\in I_l$,
\begin{align}\label{eqn:klt_approx}
a_E(X,\prod_j(\fa_{ij}+\fm^l)^{r_j})\ge\mld_{P_K}(X_K,\sfa)
\end{align}
by (\ref{itm:family_aij}) in Definition \ref{def:family}. Since $(X_K,\sfa)$ is klt, we can fix $t>0$ such that $(X_K,\sfa^{1+t})$ is lc. By Corollary \ref{cor:adic0}, $(X,\fa_i^{1+t})$ is lc for $i\in I_l$ after replacing $\cF$ with a subfamily, whence $a_E(X,\fa_i)\ge t\ord_E\fa_i=t\sum_jr_j\ord_E\fa_{ij}$. We fix $l\ge l_0$ such that $l\ge(tr_j)^{-1}\mld_{P_K}(X_K,\sfa)$ for all $j$. Then,
\begin{align}\label{eqn:klt_klt}
a_E(X,\fa_i)\ge l^{-1}\ord_E\fa_{ij}\cdot\mld_{P_K}(X_K,\sfa)
\end{align}
for any $j$ and $i\in I_l$.

If $\ord_E\fa_{ij}<l$ for all $j$, then $\ord_E\fa_{ij}=\ord_E(\fa_{ij}+\fm^l)$, so one has $a_E(X,\fa_i)=a_E(X,\prod_j(\fa_{ij}+\fm^l)^{r_j})$, and (\ref{eqn:klt_inequality}) follows from (\ref{eqn:klt_approx}). If $\ord_E\fa_{ij}\ge l$ for some $j$, then (\ref{eqn:klt_inequality}) follows from (\ref{eqn:klt_klt}).
\end{proof}

\begin{remark}\label{rmk:remain}
By Remark \ref{rmk:inequality}, Theorem \ref{thm:klt} and Corollary \ref{cor:adic0}, Conjecture \ref{cnj:stability} remains open only when $(X_K,\sfa)$ is non-klt with $\mld_{P_K}(X_K,\sfa)>0$.
\end{remark}

\subsection{The plt case whose lc centre has an isolated singularity}
Suppose that $(X_K,\sfa)$ is an lc but not klt pair every lc centre of which has codimension $1$. Then by Proposition \ref{prp:centre}, $(X_K,\sfa)$ has the smallest lc centre $S_K$ and it is normal. We prove Conjecture \ref{cnj:stability} on the assumption that $S_K$ has an isolated singularity.

\begin{theorem}\label{thm:plt}
Conjecture \textup{\ref{cnj:stability}} holds in the case when $(X_K,\sfa)$ has the smallest lc centre of codimension $1$ which is non-singular outside $P_K$.
\end{theorem}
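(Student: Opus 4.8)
I would mimic the structure of the proof of Theorem~\ref{thm:centre}, but now exploiting the fact that the smallest lc centre $S_K$ is a normal \emph{divisor} with an isolated singularity at $P_K$, rather than a curve. By Remark~\ref{rmk:inequality} and the reductions of Remark~\ref{rmk:remain}, it suffices, after possibly replacing $\cF$ with a subfamily, to prove the inequality $a_E(X,\fa_i)\ge\mld_{P_K}(X_K,\sfa)$ for every $i\in I_l$ and every $E\in\cD_X$ with centre $P$. Fix, as in Lemma~\ref{lem:mld}, a divisor $E_K\in\cD_{X_K}$ computing $M:=\mld_{P_K}(X_K,\sfa)$, extended to $E_l$ over $X\times_{\Spec k}Z_l$, so that for $i\in I_l$ with $z=s_l(i)$ one has $M=a_{(E_l)_z}(X,\prod_j(\fa_{ij}+\fm^l)^{r_j})$ and $\ord_{E_K}\sfa_j=\ord_{(E_l)_z}\fa_{ij}<l$.

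\textbf{Key steps.} First I would reduce, as usual, to showing that $(X,\fa_i)$ is lc with $a_E(X,\fa_i)\ge M$ only for those $E$ with $\ord_E\fa_{ij}\ge l$ for some $j$, since otherwise the approximation $\fa_{ij}=\fa_{ij}+\fm^l$ along $E$ gives $a_E(X,\fa_i)=a_E(X,\prod_j(\fa_{ij}+\fm^l)^{r_j})\ge M$ directly. Second, I would use the plt hypothesis: since $(X_K,\sfa)$ is plt, all lc centres are the single divisor $S_K$, so fixing the $\bR$-divisor $\Sigma_K$ defined near $P_K$ with $\sfa\sim\cO(-\Sigma_K)\cdot(\text{klt part})$, one can choose $t>0$ such that adjunction $(S_K,\sfa|_{S_K})$ is klt and $(X_K,\sfa^{1+t})$ is lc away from $P_K$; combined with Theorem~\ref{thm:centre} applied on $\widehat{\cO_{S_K,P_K}}$ (which is a non-singular $R'$-variety of dimension $\le 2$, hence normal) one controls the behaviour at $P_K$ itself. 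The point of the isolated-singularity assumption is precisely that $S_K\setminus P_K$ is smooth, so the klt-type estimate $a_E(X_K,\sfa)\ge t\ord_E\sfa$ holds for all $E$ with centre $\ne P_K$ on $S_K$, and the only centre to worry about is $P_K$, where Theorem~\ref{thm:klt}'s proof technique (via Corollary~\ref{cor:adic0} and the ideal-adic lc estimate) transfers the lc-ness and the discrepancy bound from $(X_K,\sfa)$ to $(X,\fa_i)$ for $i$ in a suitable subfamily. Third, for the remaining divisors $E$ with large order along some $\fa_{ij}$, I would combine the two inequalities — the adjunction/klt estimate $a_E(X,\fa_i)\ge t\sum_j r_j\ord_E\fa_{ij}$ valid after shrinking $\cF$, and the approximation estimate — choosing $l$ with $l\ge(tr_j)^{-1}M$ for all $j$, exactly as in the final paragraph of the proof of Theorem~\ref{thm:klt}; then $\ord_E\fa_{ij}\ge l$ forces $a_E(X,\fa_i)\ge l^{-1}\ord_E\fa_{ij}\cdot M\ge M$.

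\textbf{Main obstacle.} The delicate point is transferring the lc-ness of $(X_K,\sfa)$ itself (not a perturbation) to $(X,\fa_i)$: Corollary~\ref{cor:adic0} handles the case $\mld_{P_K}(X_K,\sfa)=0$, i.e. the limit pair is lc but not klt, which is exactly our situation, so it does give that $(X,\fa_i)$ is lc for $i$ in a subfamily — but I still need its smallest lc centre to be of codimension $1$ and to match $S_K$ under specialisation, which requires knowing that the non-klt locus of $(X,\fa_i)$ does not acquire an extra deeper component supported at $P$. This is where Corollary~\ref{cor:connect} (connectedness of the non-subklt locus over $P$) is essential: it forces the non-klt locus of $(X,\fa_i)$ near $P$ to be irreducible, hence to be the (codimension-one) specialisation of $S_K$, and the isolated-singularity hypothesis on $S_K$ is what lets Theorem~\ref{thm:centre}'s normality conclusion apply to the specialised surface so that adjunction is available on the nose. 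So the hardest part is the bookkeeping of the adjunction on the family $\{S_K \leadsto S_i\}$, proving that after shrinking $\cF$ the pair $(S_i,\sfa|_{S_i})$ specialises correctly and that its klt-ness (from the isolated singularity) persists; once that is in place, the two-inequality endgame is routine.
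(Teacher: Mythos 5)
Your proposal has a genuine gap at its core: the klt-type estimate $a_E(X,\fa_i)\ge t\ord_E\fa_i$ (and likewise $a_E(X_K,\sfa)\ge t\ord_E\sfa$) that your entire endgame rests on is false in the plt case. Since $S_K$ is an lc centre of codimension $1$, one has $a_{S_K}(X_K,\sfa^{1+t})=-t<0$ for every $t>0$, so $(X_K,\sfa^{1+t})$ is never lc — not even away from $P_K$ — and no uniform $t$ with $a_E\ge t\ord_E\sfa$ exists (already for $\sfa=(x)$ on $\Spec K[[x,y]]$, repeated blow-ups along the strict transform of $\{x=0\}$ give $a_{E_n}=1$ while $\ord_{E_n}\sfa=n$). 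The same failure occurs for the specialised pairs $(X,\fa_i)$, whose non-klt locus also contains a divisor. The paper's proof avoids this by factoring out the divisorial part: it writes $\sfa_j=\sfc_j\cO_{X_K}(-m_jS_K)$ with $\sum_jr_jm_j=1$ and chooses $t$ so that $(X_K,S_K,\sfc^{1+t})$ is lc, which yields a bound in terms of $\ord_E\sfc$ rather than $\ord_E\sfa$. Your proposal gestures at such a decomposition ("$\sfa\sim\cO(-\Sigma_K)\cdot(\text{klt part})$") but then reverts to the unperturbed $\ord_E\fa_{ij}$ in the final inequality, which does not follow.

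Even with the correct decomposition, the serious case is missing from your plan: divisors $E$ whose centre on the fibre $(Y_l)_z$ of a family of log resolutions lies over the strict transform of $S_K$ but outside $\Cosupp\sfc'$. For these, $\ord_E\sfc$ carries no information and the perturbation argument gives nothing. The paper handles this (its case (c)) via Lemma \ref{lem:notC}: a noetherian induction over strata of the exceptional divisor intersected with $L_l$, constructing sections $\bar{g}_l$ of the family, computing that $\mld_{\eta_{g_K(Q_K)}}(Y_K^+,E_K^+,\sfq^n\sfa'^+)=0$, and transferring this to the fibres by Theorem \ref{thm:adic0}. Nothing in your proposal plays this role. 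Your "main obstacle" paragraph instead worries about identifying the non-klt locus of $(X,\fa_i)$ and performing adjunction on the specialised surfaces $S_i$ — but the paper never needs this; it bounds $a_E(X,\fa_i)$ directly by locating $c_{(Y_l)_z}(E)$ relative to $L_l$ and $C_l$ on the descended resolution, so the connectedness theorem and adjunction on $S_i$ do not enter. Also note that $\mld_{P_K}(X_K,\sfa)$ need not be $0$ here (the application requires it to exceed $1$), so Corollary \ref{cor:adic0} transfers only lc-ness of the auxiliary lc pair $(X_K,\sfa\sfd)$, not the mld value you need.
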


We let $S_K$ denote the smallest lc centre of $(X_K,\sfa)$. $S_K$ is a prime divisor which is non-singular outside $P_K$. We define an $\bR$-ideal $\sfc=\prod_j\sfc_j^{r_j}$ by the expression $\sfa_j=\sfc_j\cO_{X_K}(-m_jS_K)$ with $\sum_jr_jm_j=1$. The $\sfa$ and $\sfc\cO_{X_K}(-S_K)$ take the same order along any divisor over $X_K$. We can fix $t>0$ such that $(X_K,S_K,\sfc^{1+t})$ is lc, since $S_K$ is the unique lc centre of $(X_K,S_K,\sfc)$.

We take a log resolution $f_K\colon Y_K\to X_K$ of $(X_K,S_K,\fm_K)$, which is isomorphic outside $P_K$. Let $\{E_{\alpha K}\}_\alpha$ be the set of all $f_K$-exceptional prime divisors. The $E_K=\sum_\alpha E_{\alpha K}$ is snc. Let $(Y_K,\Delta_K,\sfa'=\prod_j(\sfa'_j)^{r_j})$ be the pull-back of $(X_K,0,\sfa)$ and $(Y_K,T_K+\Delta_K,\sfc')$ that of $(X_K,S_K,\sfc)$. We set
\begin{align*}
L_K&:=T_K\cap f_K^{-1}(P_K),\\
C_K&:=\Cosupp\sfc'\cap f_K^{-1}(P_K).
\end{align*}
By blowing up $Y_K$ further, we may assume that $C_K$ is contained in the union of those $E_{\alpha K}$ satisfying
\begin{align}\label{eqn:tcm}
t\ord_{E_{\alpha K}}\sfc\ge\mld_{P_K}(X_K,\sfa).
\end{align}
One sees this by induction on $\displaystyle\max_J\{\min_{\alpha\in J}\{\ord_{E_{\alpha K}}\sfc\}\}$ in which one considers all subsets $J$ of indices satisfying $C_K\subset\bigcup_{\alpha\in J}E_{\alpha K}$, since the order of $\sfc$ takes value in the discrete subset $\sum_jr_j\bZ_{\ge0}$ of $\bR$.

The $f_K$ is descendible by Proposition \ref{prp:descendible}, so replacing $\cF$ with a subfamily, we obtain the diagram (\ref{eqn:cartesian}) in which $\bar{f}_l$ is a family of log resolutions. Shrinking $Z_l$, we may assume that $E_{\alpha K}$, $L_K$ and $C_K$ are the base changes of flat families $\bar{E}_{\alpha l}$, $\bar{L}_l$ and $\bar{C}_l$ in $\bar{Y}_l$ over $Z_l$. We may assume that $\sum_\alpha\bar{E}_{\alpha l}$ is an snc divisor, that the projections to $Z_l$ from every stratum of $\sum_\alpha\bar{E}_{\alpha l}$ and from its intersection with $\bar{L}_l$ are smooth and surjective, and that $\ord_{(\bar{E}_{\alpha l})_z}\bar\sfa_j(l)_z$ is constant on $z\in Z_l$ for each $\alpha$ and $j$. Their base changes in $Y_l$ are denoted by $E_{\alpha l}$, $L_l$ and $C_l$. We write $\bar{E}_l=\sum_\alpha\bar{E}_{\alpha l}$ and $E_l=\sum_\alpha E_{\alpha l}$.

We fix $m$ such that $m\ord_{E_{\alpha K}}\fm_K\ge\ord_{E_{\alpha K}}\sfc_j$ for all $\alpha$ and $j$, and set
\begin{align*}
\sfd:=\prod_j(\sfc_j+\fm_K^{m})^{tr_j}.
\end{align*}
Then $\ord_{E_{\alpha K}}\sfd=t\ord_{E_{\alpha K}}\sfc$, and $(X_K,\sfa\sfd)$ is lc. The $\sfd$ is defined over some $k(Z_l)$, so by replacing $\cF$ with a subfamily, we may assume that $\sfd$ is the base change of an $\bR$-ideal $\bar\fd_l=\prod_j\bar\fd_{lj}^{tr_j}$ on $\bA_k^d\times_{\Spec k}Z_l$ with $\bar\fm^m\otimes_k\cO_{Z_l}\subset\bar\fd_{lj}$ and that $\ord_{(\bar{E}_{\alpha l})_z}(\bar\fd_l)_z$ is constant on $Z_l$ for each $\alpha$. By Corollary \ref{cor:adic0}, after taking a subfamily, $(X,\fa_i(\fd_l)_z)$ is lc for any $i\in I_l$ with $z=s_l(i)$, where $\fd_l$ is the pull-back on $X\times_{\Spec k}Z_l$ of $\bar\fd_l$.

We fix $l\ge l_0$ such that
\begin{align}\label{eqn:lma}
l\ord_{E_{\alpha K}}\fm_K>\ord_{E_{\alpha K}}\sfa_j+\ord_{S_K}\sfa_j
\end{align}
for all $\alpha$ and $j$. By Remark \ref{rmk:inequality}, for Theorem \ref{thm:plt} it suffices to prove that after shrinking $Z_l$,
\begin{align}\label{eqn:plt_inequality}
a_E(X,\fa_i)\ge\mld_{P_K}(X_K,\sfa)
\end{align}
for any $i\in I_l$ and $E\in\cD_X$ with centre $P$. Setting $z=s_l(i)$, we shall prove (\ref{eqn:plt_inequality}) by treating the three cases according to the position of $c_{(Y_l)_z}(E)$:
\begin{enumerate}
\item[(a)]
$c_{(Y_l)_z}(E)\not\subset(L_l\cup C_l)_z$.
\item[(b)]
$c_{(Y_l)_z}(E)\subset(C_l)_z$.
\item[(c)]
$c_{(Y_l)_z}(E)\subset(L_l)_z$ and $c_{(Y_l)_z}(E)\not\subset(C_l)_z$.
\end{enumerate}

We let $\bar\sfa'(l)=\prod_j\bar\sfa'_j(l)^{r_j}$ be the weak transform on $\bar{Y}_l$ of $\prod_j\bar\sfa_j(l)^{r_j}$, and $\fa'_i=\prod_j(\fa'_{ij})^{r_j}$ the weak transform on $(Y_l)_z$ of $\fa_i$.

\begin{lemma}\label{lem:weaktf}
\begin{enumerate}
\item\label{itm:weaktf_K}
$\bar\sfa'_j(l)_K\cO_{Y_K}=\sfa'_j+\sfI_{lj}$ with an ideal sheaf $\sfI_{lj}$ which is contained in $\cO_{Y_K}(-E_K)^{\ord_{S_K}\sfa_j+1}$.
\item\label{itm:weaktf_z}
$\bar\sfa'_j(l)_z\cO_{(Y_l)_z}=\fa'_{ij}+\cI_{lij}$ with an ideal sheaf $\cI_{lij}$ which is contained in $\cO_{(Y_l)_z}(-(E_l)_z)^{\ord_{S_K}\sfa_j+1}$.
\item\label{itm:weaktf_cosupp}
$\Cosupp\bar\sfa'(l)=\bar{L}_l\cup\bar{C}_l$ after shrinking $Z_l$.
\end{enumerate}
\end{lemma}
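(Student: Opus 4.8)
The plan is to prove the three assertions of Lemma~\ref{lem:weaktf} by unwinding the definition of the weak transform and using the flatness properties secured when shrinking $Z_l$. Recall that on a log resolution $f_K\colon Y_K\to X_K$ with exceptional divisors $E_{\alpha K}$, the weak transform $\bar\sfa'_j(l)_K\cO_{Y_K}$ is by definition $\bigl(\bar\sfa_j(l)_K\cO_{Y_K}\bigr)\cdot\cO_{Y_K}\bigl(\sum_\alpha(\ord_{E_{\alpha K}}\bar\sfa_j(l)_K)E_{\alpha K}\bigr)$, i.e.\ we strip off exactly the exceptional part. Since $\bar\sfa_j(l)=(\fa_j+\fm^l)\cap\bar R$ up to the approximation (and $\bar\sfa_j(l)_K$ specialises to $\fa_{ij}+\fm^l$ at closed points $z=s_l(i)$ with $i\ge l$), the key point is that passing from $\fa_j$ to $\fa_j+\fm_K^l$, and then taking the weak transform, changes the ideal only by terms of order $\ge l-(\text{exceptional order})$ along each $E_{\alpha K}$. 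Condition~\eqref{eqn:lma}, namely $l\ord_{E_{\alpha K}}\fm_K>\ord_{E_{\alpha K}}\sfa_j+\ord_{S_K}\sfa_j$, is precisely what guarantees that these extra terms vanish to order $>\ord_{S_K}\sfa_j$ along $E_K$; this yields \textup{(\ref{itm:weaktf_K})} with $\sfI_{lj}$ the ideal generated by the contribution of $\fm_K^l$ to the weak transform, which lies in $\cO_{Y_K}(-E_K)^{\ord_{S_K}\sfa_j+1}$.

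For \textup{(\ref{itm:weaktf_z})} I would simply specialise the whole picture: by construction of the family $\cF$ (Proposition~\ref{prp:descendible} and the subsequent shrinking), $\bar{f}_l$ is a family of log resolutions, the orders $\ord_{(\bar E_{\alpha l})_z}\bar\sfa_j(l)_z$ are constant in $z$, and $S_K$, $E_{\alpha K}$ descend to flat families $\bar T_l$, $\bar E_{\alpha l}$. Hence the formation of the weak transform commutes with base change to a closed point $z\in Z_l$, and $\bar\sfa'_j(l)_z\cO_{(Y_l)_z}$ equals the weak transform of $\fa_i$, namely $\fa'_{ij}$, plus the specialisation $\cI_{lij}$ of $\sfI_{lj}$, which inherits the containment in $\cO_{(Y_l)_z}(-(E_l)_z)^{\ord_{S_K}\sfa_j+1}$ by flatness of $E_l/Z_l$. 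The bookkeeping here is routine once one knows that ``weak transform'' is a fibrewise operation for a family of log resolutions; the only care needed is that the exceptional orders do not jump, which was arranged when we shrank $Z_l$.

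Finally, for \textup{(\ref{itm:weaktf_cosupp})}: since $(Y_K,\Delta_K,\sfa')$ is the pull-back of $(X_K,0,\sfa)$ and $\sfa_j=\sfc_j\cO_{X_K}(-m_jS_K)$ with $\sum_jr_jm_j=1$, the co-support of the weak transform $\sfa'$ on $Y_K$ is the part of $\Cosupp\sfa'=\Cosupp\sfc'\cup T_K$ lying over $P_K$ together with possibly some horizontal components; but $f_K$ is an isomorphism outside $P_K$ and $(X_K,\sfc\cO_{X_K}(-S_K))$ has unique lc centre $S_K$, so outside $f_K^{-1}(P_K)$ the weak transform is trivial on $Y_K\setminus T_K$, giving $\Cosupp\sfa'\cap f_K^{-1}(P_K)=L_K\cup C_K$. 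Using \textup{(\ref{itm:weaktf_K})}, the extra ideal $\sfI_{lj}$ is supported in $E_K\subset f_K^{-1}(P_K)$ and in fact $\Cosupp(\sfa'_j+\sfI_{lj})\cap f_K^{-1}(P_K)=\Cosupp\sfa'_j\cap f_K^{-1}(P_K)$ since $\sfI_{lj}$ has high order along $E_K$; descending this identity to the family and shrinking $Z_l$ so that $\Cosupp\bar\sfa'(l)$ is flat over $Z_l$ gives $\Cosupp\bar\sfa'(l)=\bar L_l\cup\bar C_l$. The main obstacle I anticipate is bounding $\sfI_{lj}$ correctly in \textup{(\ref{itm:weaktf_K})}: one must chase through how $\fm_K^l$ interacts with pulling back along $f_K$ and then with multiplying by $\cO_{Y_K}\bigl(\sum_\alpha(\ord_{E_{\alpha K}}\bar\sfa_j(l)_K)E_{\alpha K}\bigr)$, and verify that \eqref{eqn:lma} (rather than a weaker inequality) is exactly what is needed; everything else is formal consequence of the base-change compatibilities already built into the family $\cF$.
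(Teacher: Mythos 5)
Your proposal is correct and follows essentially the same route as the paper: you identify $\sfI_{lj}$ as the image $\cO_{Y_K}(A_{jK}-lM_K)$ of $\fm_K^l$ after stripping the exceptional part $A_{jK}=\sum_\alpha(\ord_{E_{\alpha K}}\sfa_j)E_{\alpha K}$, use (\ref{eqn:lma}) for the order bound along $E_K$, specialise via the constancy of the orders $\ord_{(\bar E_{\alpha l})_z}\bar\sfa_j(l)_z$ for (\ref{itm:weaktf_z}), and deduce (\ref{itm:weaktf_cosupp}) from (\ref{itm:weaktf_K}) since $\Cosupp\sfI_{lj}=f_K^{-1}(P_K)$. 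The only slip is writing ``order $\ge l-(\text{exceptional order})$'' where you mean $l\ord_{E_{\alpha K}}\fm_K-\ord_{E_{\alpha K}}\sfa_j$, but your appeal to (\ref{eqn:lma}) shows you have the correct quantity in mind.
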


\begin{proof}
Write $\fm_K\cO_{Y_K}=\cO_{Y_K}(-M_K)$ and $\sfa_j\cO_{Y_K}=\sfa'_j\cO_{Y_K}(-A_{jK})$. The inequality (\ref{eqn:lma}) means that $\sfI_{lj}=\cO_{Y_K}(A_{jK}-lM_K)$ is an ideal sheaf contained in $\cO_{Y_K}(-E_K)^{\ord_{S_K}\sfa_j+1}$. Then $\fm_K^l\cO_{Y_K}=\sfI_{lj}\cO_{Y_K}(-A_{jK})$, which induces (\ref{itm:weaktf_K}) by $\bar\sfa_j(l)_KR_K=\sfa_j+\fm_K^l$. From (\ref{itm:weaktf_K}), $\Cosupp(\bar\sfa'(l)_K\cO_{Y_K})=\Cosupp\sfa'\cap f_K^{-1}(P_K)=L_K\cup C_K$, which is extended to $\Cosupp\bar\sfa'(l)=\bar{L}_l\cup\bar{C}_l$ in (\ref{itm:weaktf_cosupp}). On the other hand, $\ord_{E_{\alpha K}}\fm_K=\ord_{(E_{\alpha l})_z}\fm$ and $\ord_{E_{\alpha K}}\sfa_j=\ord_{E_{\alpha K}}\bar\sfa_j(l)_KR_K=\ord_{(\bar{E}_{\alpha l})_z}\bar\sfa_j(l)_z=\ord_{(E_{\alpha l})_z}\fa_{ij}$ by (\ref{eqn:lma}) and Definitions \ref{def:family}, \ref{def:limit}. Then, (\ref{itm:weaktf_z}) is induced similarly to (\ref{itm:weaktf_K}).
\end{proof}

The cases (a) and (b) are not difficult.

\begin{proof}[Proof of \textup{(\ref{eqn:plt_inequality})} in the case \textup{(a)}]
Set $\Delta_l=\sum_\alpha(1-a_{E_{\alpha K}}(X_K,\sfa))E_{\alpha l}$, base-changed to $\Delta_K$. Then $((Y_l)_z,(\Delta_l)_z,\fa'_i)$ is the pull-back of $(X,0,\fa_i)$. We have $a_E((Y_l)_z,(\Delta_l)_z)\ge\ord_E(E_l-\Delta_l)_z$ by the log canonicity of $((Y_l)_z,(E_l)_z)$. For a divisor $(E_{\alpha l})_z$ containing $c_{(Y_l)_z}(E)$, we have $\ord_E(E_l-\Delta_l)_z\ge\ord_{(E_{\alpha l})_z}(E_l-\Delta_l)_z=a_{E_{\alpha K}}(X_K,\sfa)\ge\mld_{P_K}(X_K,\sfa)$. Hence $a_E((Y_l)_z,(\Delta_l)_z)\ge\mld_{P_K}(X_K,\sfa)$. By Lemma \ref{lem:weaktf}(\ref{itm:weaktf_z}) and (\ref{itm:weaktf_cosupp}), $\Cosupp\fa'_i\cap(f_l)_z^{-1}(P)=\Cosupp\bar\sfa'(l)_z\cO_{(Y_l)_z}=(L_l\cup C_l)_z$, so $\ord_E\fa'_i=0$. Thus $a_E(X,\fa_i)=a_E((Y_l)_z,(\Delta_l)_z,\fa'_i)=a_E((Y_l)_z,(\Delta_l)_z)\ge\mld_{P_K}(X_K,\sfa)$.
\end{proof}

\begin{proof}[Proof of \textup{(\ref{eqn:plt_inequality})} in the case \textup{(b)}]
The $c_{(Y_l)_z}(E)$ lies on some $(E_{\alpha l})_z$ such that $E_{\alpha K}$ satisfies (\ref{eqn:tcm}). Then $\ord_E(\fd_l)_z\ge\ord_{(E_{\alpha l})_z}(\fd_l)_z=\ord_{E_{\alpha K}}\sfd=t\ord_{E_{\alpha K}}\sfc\ge\mld_{P_K}(X_K,\sfa)$. On the other hand, the log canonicity of $(X,\fa_i(\fd_l)_z)$ implies $a_E(X,\fa_i)\ge\ord_E(\fd_l)_z$. These two inequalities are joined as $a_E(X,\fa_i)\ge\mld_{P_K}(X_K,\sfa)$.
\end{proof}

The case (c) is reduced to the following log canonicity.

\begin{lemma}\label{lem:notC}
After shrinking $Z_l$, the triplet $((Y_l)_z,(E_l)_z,\fa'_i)$ is lc about $(L_l)_z\setminus(C_l)_z$ for any $i\in I_l$ with $z=s_l(i)$.
\end{lemma}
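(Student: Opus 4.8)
\textbf{Proof plan for Lemma \ref{lem:notC}.}
The plan is to reduce the log canonicity of $((Y_l)_z,(E_l)_z,\fa'_i)$ near $(L_l)_z\setminus(C_l)_z$ to the known log canonicity of the pair $(X_K,\sfa)$ near $S_K$, using the fact that off $C_K$ the weak transform $\sfa'$ has order $0$ along the relevant divisors and that $T_K$ (hence $L_K$) is the only lc centre in the $(X_K,S_K,\sfc)$ picture. First I would recall from Lemma \ref{lem:weaktf}(\ref{itm:weaktf_z}),(\ref{itm:weaktf_cosupp}) that $\Cosupp\fa'_i\cap(f_l)_z^{-1}(P)=(L_l\cup C_l)_z$, so on a neighbourhood of $(L_l)_z\setminus(C_l)_z$ the ideal $\fa'_i$ is supported only along $(L_l)_z$ (which is $T_K$-like, i.e.\ a reduced prime divisor appearing with multiplicity one in $(E_l)_z$), and indeed near a general point of $(L_l)_z$ the weak transform $\fa'_{ij}$ has order exactly $\ord_{S_K}\sfa_j$ along the corresponding component, matching $\bar\sfa'_j(l)_z$ up to the ideal $\cI_{lij}\subset\cO(-(E_l)_z)^{\ord_{S_K}\sfa_j+1}$.

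Next I would transfer the problem to the generic fibre. The family $\bar f_l$ is a family of log resolutions with $\sum_\alpha\bar E_{\alpha l}$ snc over $Z_l$, and all the relevant numerical data ($\ord_{(\bar E_{\alpha l})_z}\bar\sfa_j(l)_z$, the coefficients of $(E_l)_z$, the strata and their intersections with $\bar L_l$) are constant along $Z_l$ after shrinking. So the locus where $((Y_l)_z,(E_l)_z,\fa'_i)$ fails to be lc near $(L_l)_z\setminus(C_l)_z$ is the $z$-fibre of a constructible subset of $\bar Y_l$, and it suffices to check log canonicity for the geometric generic fibre, i.e.\ for $((Y_K),(E_K),\bar\sfa'(l)_K\cO_{Y_K})$ near $L_K\setminus C_K$. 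Here I would use Lemma \ref{lem:weaktf}(\ref{itm:weaktf_K}): $\bar\sfa'_j(l)_K\cO_{Y_K}=\sfa'_j+\sfI_{lj}$ with $\sfI_{lj}\subset\cO_{Y_K}(-E_K)^{\ord_{S_K}\sfa_j+1}$, so along $L_K$ outside $C_K$ this ideal and $\sfa'_j$ take the same order, and hence $((Y_K),(E_K),\bar\sfa'(l)_K\cO_{Y_K})$ and $(Y_K,\Delta_K',\sfa')$ have the same non-subklt behaviour near $L_K\setminus C_K$, where $\Delta_K'$ is the appropriate boundary making $(Y_K,\Delta_K',\sfa')$ a crepant model of $(X_K,\sfa)$.

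Finally I would invoke the hypothesis that $(X_K,\sfa)$ is lc with $S_K$ its unique (smallest) lc centre: pulling back, $(Y_K,\Delta_K,\sfa')$ is sublc, and the only non-subklt centres lying over $P_K$ are contained in $T_K\cup\Cosupp\sfc'$, i.e.\ in $L_K\cup C_K$; moreover $T_K$ appears with coefficient exactly $1$ in the boundary, so adding $E_K$ (the reduced exceptional divisor, which already contains $T_K$ with the matching coefficient after accounting for $\Delta_K$ versus $\Delta_K'$) does not create new non-sublc behaviour away from $C_K$ — precisely because the excess of $E_K$ over $\Delta_K$ along each $E_{\alpha K}\not\subset C_K$ is $a_{E_{\alpha K}}(X_K,\sfa)\ge 0$. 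Combining, $((Y_K),(E_K),\bar\sfa'(l)_K\cO_{Y_K})$ is lc near $L_K\setminus C_K$, which spreads out to the conclusion for $((Y_l)_z,(E_l)_z,\fa'_i)$ near $(L_l)_z\setminus(C_l)_z$ after shrinking $Z_l$. The main obstacle I anticipate is bookkeeping the coefficient of $T_K$ (equivalently $L_K$) precisely: one must confirm that replacing the crepant boundary $\Delta_K$ by the reduced divisor $(E_K)$ keeps the coefficient along $T_K$ equal to $1$ and makes all other coefficients $\le 1$ near $L_K\setminus C_K$, so that no adjunction-type lc center beyond $T_K$ is introduced; this is where the hypothesis "$S_K$ non-singular outside $P_K$" and the snc-ness of $E_K$ are used to rule out pathological intersections of $T_K$ with other exceptional divisors over a neighbourhood of the generic point of $L_K$.
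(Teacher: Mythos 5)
Your analysis of the geometric generic fibre $(Y_K,E_K,\bar\sfa'(l)_K\cO_{Y_K})$ near $L_K\setminus C_K$ is sound and corresponds to what the paper establishes, but the final step --- ``this spreads out to the conclusion for $((Y_l)_z,(E_l)_z,\fa'_i)$'' --- contains the essential gap. The ideal $\fa'_i$ is the weak transform of the honest ideal $\fa_i$ in $R=k[[x_1,\ldots,x_d]]$; it is \emph{not} a member of the family over $Z_l$, and it agrees with the family member $\bar\sfa'(l)_z\cO_{(Y_l)_z}$ only modulo the deep ideal $\cI_{lij}\subset\cO_{(Y_l)_z}(-(E_l)_z)^{\ord_{S_K}\sfa_j+1}$ of Lemma \ref{lem:weaktf}(\ref{itm:weaktf_z}). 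Consequently the non-lc locus of $((Y_l)_z,(E_l)_z,\fa'_i)$ is not the $z$-fibre of a constructible subset of $\bar Y_l$, and log canonicity does not automatically pass from the truncated family ideal to $\fa'_i$: that passage is exactly the content of the ideal-adic semi-continuity problem (Conjecture \ref{cnj:adic}), which is open in general. If one could simply ``check on the geometric generic fibre'' as you propose, Conjecture \ref{cnj:stability} would follow at once and the machinery of Sections \ref{sec:kltplt}--\ref{sec:threefold} would be unnecessary.

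The paper closes this gap with a device your proposal omits. By noetherian induction over irreducible locally closed subsets $\bar Q_l\subset\bar F_l\cap\bar L_l\setminus\bar C_l$ dominating $Z_l$ (needed to cover \emph{all} points of $(L_l)_z\setminus(C_l)_z$, not only general ones), it produces at each closed point $q$ an auxiliary triplet $((Y_l)_z,(E_l)_z,\fm_q^n\bar\sfa'(l)\cO_{(Y_l)_z})$ with $n=\dim F_K-1$ whose minimal log discrepancy at $q$ is exactly $0$ and is computed by the point blow-up $(G_l)_q$, along which $\bar\sfa'_j(l)\cO_{(Y_l)_z}$ and $\fa'_{ij}$ have the same order because $\ord_{(G_l)_q}\cI_{lij}$ is strictly larger. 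Only then does Theorem \ref{thm:adic0} (ideal-adic semi-continuity at mld $=0$) permit replacing the family ideal by $\fa'_i$, giving $\mld_q((Y_l)_z,(E_l)_z,\fm_q^n\fa'_i)=0$ and hence the desired log canonicity. Your proposal never invokes Theorem \ref{thm:adic0} nor the insertion of $\fm_q^n$ (equivalently $\sfq^n$ on the base-changed model) to force the mld to be exactly zero at a divisor where the two ideals have equal order; without these ingredients the argument does not go through.
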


\begin{proof}[Proof of \textup{(\ref{eqn:plt_inequality})} in the case \textup{(c)} from Lemma \textup{\ref{lem:notC}}]
For $\Delta_l=\sum_\alpha(1-a_{E_{\alpha K}}(X_K,\sfa))E_{\alpha l}$, we have $a_E(X,\fa_i)=a_E((Y_l)_z,(\Delta_l)_z,\fa'_i)\ge\ord_E(E_l-\Delta_l)_z$ by Lemma \ref{lem:notC}, and have seen $\ord_E(E_l-\Delta_l)_z\ge\mld_{P_K}(X_K,\sfa)$ in the proof in the case (a).
\end{proof}

\begin{proof}[Proof of Lemma \textup{\ref{lem:notC}}]
Pick any open stratum $F_K$ of the snc divisor $E_K$, which is extended to an open stratum $\bar{F}_l$ of $\bar{E}_l$. We prove Lemma \ref{lem:notC} by noetherian induction. Recall that $l$ has been fixed. Let $\bar{Q}_l$ be an irreducible locally closed subset of $\bar{F}_l\cap\bar{L}_l\setminus\bar{C}_l$ which dominates $Z_l$. It suffices to show that the existence of a dense open subset $\bar{Q}_l^\circ$ of $\bar{Q}_l$ such that the triplet $((Y_l)_z,(E_l)_z,\fa'_i)$ is lc about $(Q_l^\circ)_z$ for $i\in I_l$ with $z=s_l(i)$, where $Q_l^\circ=\bar{Q}_l^\circ\times_{\bar{Y}_l}Y_l$.

By shrinking $\bar{Q}_l$ and $Z_l$, we may assume that $\bar{Q}_l\to Z_l$ is smooth and surjective. Let $\bar{f}_l^+\colon\bar{Y}_l^+\to\bA_k^d\times_{\Spec k}\bar{Q}_l$ be the base change of $\bar{f}_l$ by $\bar{Q}_l\to Z_l$. Then $\pr_{\bar{Q}_l}\circ\bar{f}_l^+$ has the natural section $\bar{g}_l\colon\bar{Q}_l\to\bar{Y}_l^+=\bar{Y}_l\times_{Z_l}\bar{Q}_l$ by the immersion $\bar{Q}_l\hookrightarrow\bar{Y}_l$. We construct $f_K^+$ and $g_K$ similarly for $Q_K=\bar{Q}_l\times_{\bar{Y}_l}Y_K$ as below.
\begin{align*}
\xymatrix{
Y_K^+\ar[r]\ar[d]_{f_K^+}&\bar{Y}_l^+\ar[r]\ar[d]_{\bar{f}_l^+}&\bar{Y}_l\ar[d]_{\bar{f}_l}\\
X_K\times_{\Spec K}Q_K\ar[];+/r16mm/\ar[d]&\bA_k^d\times_{\Spec k}\bar{Q}_l\ar[r]\ar[d]&\bA_k^d\times_{\Spec k}Z_l\ar[d]\\
Q_K\ar[r]\ar@/^3pc/[uu]^(.7){g_K}&\bar{Q}_l\ar[r]\ar@/^3pc/[uu]^(.7){\bar{g}_l}&Z_l
}
\end{align*}

The $\bar{Y}_l^+$, $Y_K^+$ are the base changes of $\bar{Y}_l$, $Y_K$ by smooth morphisms. For a staff $\square$ on $\bar{Y}_l$ or $Y_K$, we mean by $\square^+$ the base change of $\square$ on $\bar{Y}_l^+$ or $Y_K^+$. For example, $\sfa'^+=\prod_j(\sfa'^+_j)^{r_j}=\sfa'\cO_{Y_K^+}$. Let $\bar\fq_l$ be the ideal sheaf of $\bar{g}_l(\bar{Q}_l)$ on $\bar{Y}_l^+$ and $\bar{G}_l\in\cD_{\bar{Y}_l^+}$ the divisor obtained by the blow-up of $\bar{Y}_l^+$ along $\bar\fq_l$. They are base-changed to $\sfq$ on $Y_K^+$ and $G_K\in\cD_{Y_K^+}$.

We see that $\mld_{\eta_{g_K(Q_K)}}(Y_K^+,E_K^+,\sfq^n\sfa'^+)=0$ with $n=\dim F_K-1$ and it is computed by $G_K$. We have $\bar\sfa'(l)_K^+\cO_{Y_K^+}=\prod_j(\sfa'^+_j+\sfI^+_{lj})^{r_j}$ and $\ord_{G_K}\sfa'^+_j=\ord_{S_K}\sfa_j<\ord_{G_K}\sfI_{lj}^+$ from Lemma \ref{lem:weaktf}(\ref{itm:weaktf_K}), so $\mld_{\eta_{g_K(Q_K)}}(Y_K^+,E_K^+,\sfq^n\bar\sfa'(l)_K^+\cO_{Y_K^+})=0$ and it is computed by $G_K$. Then $\mld_{\eta_{\bar{g}_l(\bar{Q}_l)}}(\bar{Y}_l^+,\bar{E}_l^+,\bar\fq_l^n\bar\sfa'(l)^+)=0$ and it is computed by $\bar{G}_l$. We regard $\bar{Y}_l^+$ as a family over $\bar{Q}_l$. There exists a dense open subset $\bar{Q}_l^\circ$ of $\bar{Q}_l$ such that for any closed point $q\in Q_l^\circ=\bar{Q}_l^\circ\times_{\bar{Y}_l}Y_l$ with its image $z\in Z_l$, $\mld_q((Y_l)_z,(E_l)_z,\fm_q^n\bar\sfa'(l)\cO_{(Y_l)_z})=0$, computed by $(G_l)_q$, and $\ord_{(G_l)_q}\bar\sfa'_j(l)\cO_{(Y_l)_z}=\ord_{S_K}\sfa_j$, where $\fm_q$ is the maximal ideal sheaf of $q\in(Y_l)_z$ and $G_l=\bar{G}_l\times_{\bar{Y}_l}Y_l$. The $(G_l)_q$ is obtained by the blow-up of $(Y_l)_z$ at $q$. For $i\in I_l$ with $z=s_l(i)$, $\bar\sfa'(l)\cO_{(Y_l)_z}=\prod_j(\fa'_{ij}+\cI_{lij})^{r_j}$ and $\ord_{(G_l)_q}\bar\sfa'_j(l)\cO_{(Y_l)_z}<\ord_{(G_l)_q}\cI_{lij}$ by Lemma \ref{lem:weaktf}(\ref{itm:weaktf_z}). Applying Theorem \ref{thm:adic0}, we have $\mld_q((Y_l)_z,(E_l)_z,\fm_q^n\fa'_i)=0$, and the log canonicity of $((Y_l)_z,(E_l)_z,\fa'_i)$ about $(Q_l^\circ)_z$ is concluded.
\end{proof}

Theorem \ref{thm:plt} is completed.

\section{The threefold case}\label{sec:threefold}
We shall prove Theorem \ref{thm:acc}. By Remark \ref{rmk:reduction}, the theorem follows from Conjecture \ref{cnj:stability} for $d=3$ with $\mld_{P_K}(X_K,\sfa)>1$. In Remark \ref{rmk:remain}, Conjecture \ref{cnj:stability} is reduced to the case when $(X_K,\sfa)$ is an lc pair which has a minimal lc centre $Z$ of positive dimension. If $d=3$, then by Theorem \ref{thm:centre}, $Z$ is the smallest lc centre and it is normal. If $Z$ is a surface, then one can apply Theorem \ref{thm:plt}. If $Z$ is a curve, then $\mld_{P_K}(X_K,\sfa)\le1$ by Proposition \ref{prp:curve}. Therefore, we obtain Theorem \ref{thm:acc}.

\begin{proposition}\label{prp:curve}
Let $P\in(X,\fa)$ be a germ of an lc pair on a non-singular $R$-variety $X$ of dimension $3$ with $R=K[[x_1,\ldots,x_d]]$ whose smallest lc centre is a curve. Then $\mld_P(X,\fa)\le1$.
\end{proposition}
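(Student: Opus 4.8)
### Proof proposal for Proposition \ref{prp:curve}

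The plan is to combine the normality of the smallest lc centre $C$ (Theorem \ref{thm:centre}) with an adjunction-type argument along $C$, using the structure of the resolution from the proof of Theorem \ref{thm:centre}. First I would reduce, by Corollary \ref{cor:extension}, to the case $X = \Spec R$ with $K = k$ algebraically closed, and by Proposition \ref{prp:perturb} to the case where every lc centre of $(X,\fa)$ is minimal, so that $C$ is the unique lc centre and, by Theorem \ref{thm:centre}, a normal (hence non-singular) curve through $P$. I would then take the embedded resolution $f\colon Y\to X$ of $C$ from Section \ref{sec:connect}, isomorphic outside $P$, inducing the isomorphism $f_C\colon C_Y \xrightarrow{\sim} C$ (normality), with $f_C^{-1}(P)$ a single point $P_Y$, and a log resolution $q\colon W\to Y$ of the relevant ideal; the divisor $E_D$ over $W$ with centre $P_Y$ and the divisor $F_D$ through $D$ with $a_{F_D}(X,\fa)=0$ play the central role, exactly as in the proof of the normality of $C$.

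The core of the argument is adjunction: since $F_D$ is an lc place of $(X,\fa)$ with centre the curve $C$ (its strict transform dominates $C_Y \simeq C$), restricting $K_W + F_D + (\text{rest of the non-subklt boundary}) = f^*q^*(K_X + \text{(divisor defined by }\fa))$ to $F_D$ and pushing down gives a subpair on the normalisation $C_Y$, i.e.\ essentially a germ of an lc but not klt pair in dimension $1$. The key computation is then to bound the minimal log discrepancy of $(X,\fa)$ at the closed point $P$ by the log discrepancy at $P_Y \in C_Y$ of this restricted one-dimensional pair. Concretely, I would produce a divisor $E$ over $X$ with $c_X(E) = P$ and $a_E(X,\fa) \le 1$ by blowing up a point of $F_D$ lying over $P_Y$ inside the fibre $F_D \cap q^{-1}(P_Y)$ (which contains $D$): the coefficient $1$ arises because $\ord_{F_D}\fa$ already equals the discrepancy needed to make $a_{F_D}(X,\fa)=0$, so the newly created divisor picks up at most the ``remaining slack,'' which on a curve through a point is at most $1$ by a one-dimensional log-discrepancy count (the germ $(C, \text{Diff})$ contributes at most $\dim C = 1$ from the point blow-up together with the fact that the restricted boundary is effective of coefficient $\ge 0$). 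Equivalently, one uses $\mld_P(X,\fa) \le \mld_P(C,\mathrm{Diff}_C) + \mathrm{codim}$-type inequality combined with $\mld$ of a one-dimensional lc pair at a point being $\le 1$.

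The main obstacle I anticipate is making the adjunction precise in the $R$-variety setting: ordinary adjunction/inversion of adjunction is usually stated for varieties over a field, and here $X = \Spec R$ is only essentially of finite type over $R = k[[x_1,\dots,x_d]]$. I expect this to be handled by descending the whole configuration to $\bA_k^d$ via Proposition \ref{prp:descendible} (as done repeatedly in Section \ref{sec:connect}), where classical adjunction along the normal curve $C$ applies, and then pulling the inequality back; the normality of $C$ is exactly what makes adjunction clean, so Theorem \ref{thm:centre} is used in an essential way and not merely for the uniqueness of the lc centre. A secondary point to be careful about is that $\fa$ is only an $\bR$-ideal, not a divisor, so I would first pass to an invertible $\bR$-ideal on a log resolution and track orders along $F_D$ and along the point blow-up, as in Lemma \ref{lem:tower}; since all the orders involved take values in the discrete set $\sum_j r_j \bZ_{\ge 0}$, the bound $\le 1$ should come out sharply without any limiting argument.
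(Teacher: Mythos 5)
There is a genuine gap in the central construction. You propose to obtain a divisor $E$ with $c_X(E)=P$ and $a_E(X,\fa)\le1$ by blowing up a point $w$ of $F_D$ over $P_Y$ on the log resolution $W$. Numerically this does not work: for a point blow-up on a non-singular $3$-fold one has $a_G(W,\Delta_W,\fa_W)=3-\ord_G\Delta_W-\ord_G\fa_W$, where $(W,\Delta_W,\fa_W)$ is the pull-back of $(X,0,\fa)$, and the only contribution to $\ord_G\Delta_W$ that you control is the coefficient $1-a_{F_D}(X,\fa)\ge1$ of the single lc place $F_D$; the other exceptional divisors through $w$ (such as $E_D$ and the components of $g^{-1}(P)$) appear in the \emph{sub}-boundary $\Delta_W$ with coefficients $1-a_E(X,\fa)$ that are typically negative. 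So the guaranteed bound is only $a_G\le 2+a_{E_D}(X,\fa)+\cdots$, not $\le1$; and blowing up the curve $D\subset E_D\cap F_D$ instead gives $a\le a_{E_D}(X,\fa)+a_{F_D}(X,\fa)$, again unbounded. Your fallback, the ``adjunction'' inequality $\mld_P(X,\fa)\le\mld_P(C,\mathrm{Diff}_C)+1$, is essentially lower semi-continuity (or precise adjunction) for minimal log discrepancies along the lc centre $C$; this is a theorem of Ein--Musta\c{t}\u{a}--Yasuda for varieties over a field but is not established for $R$-varieties, and it cannot be descended to $\bA_k^d$ via Proposition \ref{prp:descendible}, since that proposition only descends blow-ups of $\fm$-primary ideals, whereas $\fa$ and $C$ are genuinely formal objects. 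Circumventing exactly this descent obstruction is the point of the whole of Section \ref{sec:connect}.

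The paper's proof is different and avoids both problems. It builds a tower $X_n\to\cdots\to X_0=X$ of blow-ups along the successive non-klt curves $C_{i-1}$, which are non-singular by applying Theorem \ref{thm:centre} again at each stage; the key computation is that the boundaries $\Delta_i$ of the pull-backs remain \emph{effective}, because $\ord_{E_i}\Delta_i=\ord_{C_{i-1}}\Delta_{i-1}+\ord_{C_{i-1}}\fa_{i-1}-1>0$ by Lemma \ref{lem:curve} ($\ord_Z\fa>1$ for a non-klt centre $Z$ of codimension $\ge2$). Once the lc place $E_n$ is extracted as a divisor on the smooth model $X_n$ with $\Delta_n\ge(1-a_{E_n}(X,\fa))E_n=E_n$, one blows up a \emph{curve} (codimension $2$, so discrepancy contribution $2$ rather than $3$) in $E_n$ over $P$ to get $a_F(X,\fa)\le a_F(X_n,E_n)=2-1=1$. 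If you want to rescue your outline, you must (i) replace the point blow-up by a curve blow-up inside an lc place realised as a divisor, and (ii) work on a model where the boundary is effective, which is what the tower and Lemma \ref{lem:curve} provide and what the log resolution $W$ does not.
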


\begin{proof}
The smallest lc centre $C$ of $(X,\fa)$ is non-singular by Theorem \ref{thm:centre}. Setting $(X_0,\Delta_0,\fa_0):=(X,0,\fa)$ and $C_0:=C$, we build a tower of finitely many blow-ups
\begin{align*}
X_n\to\cdots\to X_i\xrightarrow{f_i}X_{i-1}\to\cdots\to X_0=X
\end{align*}
such that (i) $f_i\colon X_i\to X_{i-1}$ is the blow-up along $C_{i-1}$, (ii) $E_i$ is the exceptional divisor of $f_i$, (iii) $(X_i,\Delta_i,\fa_i)$ is the pull-back of $(X_{i-1},\Delta_{i-1},\fa_{i-1})$, (iv) $C_i$ is a non-singular non-klt centre on $X_i$ of $(X,\fa)$ mapped onto $C_{i-1}$, and (v) $a_{E_i}(X,\fa)>0$ for $i<n$ and $a_{E_n}(X,\fa)=0$. Here one can prove the effectiveness $\Delta_i\ge0$ and the non-singularity of $C_i$ by induction. Indeed, if they hold for $i-1$, then $\ord_{E_i}\Delta_i=\ord_{C_{i-1}}\Delta_{i-1}+\ord_{C_{i-1}}\fa_{i-1}-1>0$ by Lemma \ref{lem:curve}. Unless $a_{E_i}(X,\fa)=0$, an arbitrary lc centre $C_i$ of $(X_i,\Delta_i,\fa_i)$ mapped onto $C_{i-1}$ is a curve and is minimal. The non-singularity of $C_i$ follows from Theorem \ref{thm:centre}.

Let $F$ be the divisor obtained by the blow-up of $X_n$ along a curve in $E_n\cap(f_1\circ\cdots\circ f_n)^{-1}(P)$. Then $a_F(X,\fa)=a_F(X_n,\Delta_n,\fa_n)\le a_F(X_n,E_n)=1$ by $\Delta_n\ge(1-a_{E_n}(X,\fa))E_n=E_n$.
\end{proof}

\begin{lemma}\label{lem:curve}
Let $(X,\fa)$ be a pair on a non-singular $R$-variety $X$ and $Z$ a non-klt centre of $(X,\fa)$. Then $\ord_Z\fa\ge1$. If in addition $\codim_XZ\ge2$, then $\ord_Z\fa>1$.
\end{lemma}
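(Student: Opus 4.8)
The plan is to produce, for a non-klt centre $Z$, a divisor $E$ over $X$ with $c_X(E)=Z$ and $a_E(X,\fa)\le0$ — say $f\colon Y\to X$ from a non-singular $R$-variety with $E\subset Y$ — and extract from $a_E(X)=1+\ord_E K_{Y/X}\le\sum_jr_j\ord_E\fa_j$ (writing $\fa=\prod_j\fa_j^{r_j}$) the core estimate
\[
\ord_E\fa_j\le a_E(X)\cdot\ord_Z\fa_j\qquad\text{for each }j.
\]
First I would pick a general element $g_j\in\fa_j$, so that $\mult_{\eta_Z}g_j=\ord_Z\fa_j=:e_j$ and, since the log canonical threshold of a hypersurface at a non-singular point is at least the reciprocal of its multiplicity, the subpair $(X,e_j^{-1}D_j)$ with $D_j$ the divisor of $g_j$ is sublc at $\eta_Z$; evaluating at $E$ gives $\ord_E g_j\le e_j\,a_E(X)$, and $\ord_E\fa_j\le\ord_E g_j$ since $(g_j)\subset\fa_j$. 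Summing and using $a_E(X)\ge\codim_X Z\ge1$ then gives $\ord_Z\fa\ge1$. (In codimension $1$ this is even more transparent: $\cO_{X,\eta_Z}$ is a discrete valuation ring, a birational morphism is unramified over it, so the only divisor over $X$ with centre $Z$ is $Z$ itself, whence $\mld_{\eta_Z}(X,\fa)=1-\ord_Z\fa$.)

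For $\codim_X Z\ge2$ I would show the core estimate is \emph{strict} for at least one index, which upgrades $\ord_Z\fa\ge1$ to $\ord_Z\fa>1$. Since $\ord_Z\fa>0$, fix $j_0$ with $e_{j_0}\ge1$. If $(X,e_{j_0}^{-1}D_{j_0})$ is subklt at $\eta_Z$, then $\ord_E g_{j_0}<e_{j_0}a_E(X)$ automatically. Otherwise the log canonical threshold of $D_{j_0}$ at $\eta_Z$ equals $e_{j_0}^{-1}$ exactly; but equality in "$\mathrm{lct}\ge(\mult)^{-1}$" for a hypersurface is possible only when it is a non-singular divisor taken with multiplicity equal to its multiplicity, i.e.\ $D_{j_0}=e_{j_0}H$ with $H$ non-singular at $\eta_Z$. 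Then $(X,e_{j_0}^{-1}D_{j_0})=(X,H)$ near $\eta_Z$ is plt with $H$ its unique non-klt centre there, so the exceptional divisor $E$, whose centre $Z$ has codimension $\ge2$, has positive log discrepancy for $(X,H)$; again $\ord_E g_{j_0}<e_{j_0}a_E(X)$. In either case the sum is strict, $a_E(X)<a_E(X)\ord_Z\fa$, and $\ord_Z\fa>1$.

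The only inputs beyond bookkeeping with log discrepancies over the excellent base (where log resolutions are available by Hironaka and Temkin) are the bound $\mathrm{lct}_{\eta_Z}(h)\ge(\mult_{\eta_Z}h)^{-1}$ for a hypersurface germ $h$ — which I would get by restricting to a general complete-intersection curve through $\eta_Z$ and inducting on dimension — and the description of its equality case. This equality analysis is the delicate point I expect to cost the most care: it is precisely what separates "$\ge1$" from "$>1$", and it amounts to the fact that $\mathrm{lct}$ strictly exceeds $(\mult)^{-1}$ as soon as the tangent cone of $h$ is not a multiple of a single hyperplane.
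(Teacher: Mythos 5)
Your strategy is genuinely different from the paper's: you reduce everything to the Izumi-type inequality $\ord_E\fa_j\le a_E(X)\,\ord_Z\fa_j$ for $E$ with $c_X(E)=Z$, obtained from the bound $\mathrm{lct}\ge(\mult)^{-1}$ applied to a general element, and then try to make it strict for one index when $\codim_XZ\ge2$. The paper instead runs a tower of blow-ups following the non-klt centres dominating $Z$, uses Hironaka's Lemmata III.7--8 (the order of the weak transform does not increase under permissible blow-ups) to propagate $\ord_{Z_i}\fa_i\le1$ up the tower, and reaches a contradiction when the centre becomes the exceptional divisor $E_n$, which has $a_{E_n}(X_n)=1$ and $\ord_{E_n}\fa_n=0$. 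Your first conclusion, $\ord_Z\fa\ge1$, is fine modulo establishing $\mathrm{lct}_{\eta_Z}\ge(\mult_{\eta_Z})^{-1}$ in this setting; but note that your proposed derivation of even that bound, by restricting to a general curve through $\eta_Z$, uses the hard direction of inversion of adjunction, which is not available over $\Spec K[[x_1,\ldots,x_d]]$ (it is of the same depth as the connectedness and vanishing statements the paper is labouring to prove there). The inequality itself is true and can be proved valuatively, by following the sequence of infinitely near centres of $E$ --- which is essentially the paper's tower.

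The genuine gap is in your Case B, which is where the entire content of ``$>1$'' versus ``$\ge1$'' sits. You correctly state the equality characterisation you need ($\mathrm{lct}_{\eta_Z}(D_{j_0})=e_{j_0}^{-1}$ forces $D_{j_0}=e_{j_0}H$ with $H$ non-singular at $\eta_Z$), but your closing sentence reduces its proof to the claim that $\mathrm{lct}$ strictly exceeds $(\mult)^{-1}$ whenever the tangent cone is not a multiple of a single hyperplane. That condition is necessary for equality but far from sufficient for your dichotomy: $g=x^m+y^{m+1}$ has tangent cone $m\cdot\{x=0\}$, is not a unit times an $m$-th power, and satisfies $\mathrm{lct}(g)=\tfrac1m+\tfrac1{m+1}>\tfrac1m$ --- so a tangent-cone computation alone cannot place it in either branch of your dichotomy. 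To rule out such $g$ in Case B you must follow the multiplicity-$m$ locus through an unbounded sequence of infinitely near points and invoke a formal limiting argument to conclude $g$ is an exact power; alternatively, the characterisation follows from Lemma \ref{lem:curve} itself (a non-klt centre $W$ of $(X,e_{j_0}^{-1}D_{j_0})$ containing $Z$ has $\ord_W(e_{j_0}^{-1}D_{j_0})\ge1$, with strict inequality unless $W$ is a divisor, forcing $D_{j_0}=e_{j_0}W$), which is circular. Either way, the ``delicate point'' you flag is not a finishing touch but the whole theorem, and the argument you sketch for it does not close.
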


\begin{proof}
The lemma is obvious if $Z$ is a divisor, so we may assume $\codim_XZ\ge2$. Setting $X_0:=X$, $Z_0:=Z$ and $\fa_0:=\fa$, we build a tower of finitely many blow-ups
\begin{align*}
X_n\to\cdots\to X_i\xrightarrow{f_i}X_{i-1}\to\cdots\to X_0=X
\end{align*}
such that (i) $f_i$ is the composition $X_i\xrightarrow{h_i}X'_{i-1}\xrightarrow{g_{i-1}}X_{i-1}$ of the blow-up $h_i\colon X_i\to X'_{i-1}$ along the strict transform $Z'_{i-1}$ on $X'_{i-1}$ of $Z_{i-1}$ and an embedded resolution $g_{i-1}\colon X'_{i-1}\to X_{i-1}$ of singularities of $Z_{i-1}$, in which $g_{i-1}$ is isomorphic outside the singular locus of $Z_{i-1}$, (ii) $E_i$ is the exceptional divisor of $h_i$, (iii) $\fa_i$ is the weak transform on $X_i$ of $\fa_{i-1}$, (iv) $Z_i$ is a non-klt centre on $X_i$ of $(X,\fa)$ mapped onto $Z_{i-1}$, and (v) $a_{E_i}(X,\fa)>0$ for $i<n$ and $a_{E_n}(X,\fa)\le0$.

Supposing $\ord_Z\fa\le1$, we shall derive by induction two inequalities $\ord_{Z_i}\fa_i\le1$ and $a_{E_n}(X_i,\fa_i)\le0$ for any $i$. The claim for $i=0$ is trivial. If they hold for $i-1$, then $\ord_{Z_i}\fa_i\le\ord_{V_i}\fa_i\le\ord_{Z_{i-1}}\fa_{i-1}\le1$ by \cite[Lemmata III.7, III.8]{Hi64} for an irreducible closed subset $V_i$ of $Z_i$ meeting the non-singular locus of $Z_i$ such that $V_i\to Z_{i-1}$ is finite and surjective. Note that the symbol $\nu^{(1)}$ in \cite{Hi64} stands for the order. The triplet $(X_{i-1},0,\fa_{i-1})$ is pulled back to $(X_i,\Delta_i,\fa_i)$ with $\ord_{E_i}\Delta_i=1+\ord_{Z_{i-1}}\fa_{i-1}-\codim_{X_{i-1}}Z_{i-1}\le0$, so $a_{E_n}(X_i,\fa_i)\le a_{E_n}(X_i,\Delta_i,\fa_i)=a_{E_n}(X_{i-1},\fa_{i-1})\le0$.

We obtained $a_{E_n}(X_n,\fa_n)\le0$. However, it contradicts $a_{E_n}(X_n)=1$ and $\ord_{E_n}\fa_n=0$.
\end{proof}

\appendix
\section{Generic limits}\label{sec:appendix}
The generic limit is a limit of ideals. It was constructed first by de Fernex and Musta\c{t}\u{a} \cite{dFM09} using ultraproducts, and then by Koll\'ar \cite{Kl08} using Hilbert schemes. We set $\bar{R}=k[x_1,\ldots,x_d]$ with maximal ideal $\bar\fm$, and $\bA_k^d=\Spec\bar{R}$ with origin $\bar{P}$. We also set $R=k[[x_1,\ldots,x_d]]$ with $\fm=\bar\fm R$, and $X=\Spec R$ with closed point $P$. Mostly we discuss on the spectrum of a noetherian ring, where an ideal in the ring is identified with its coherent ideal sheaf.

We introduce the notion of a family of approximated ideals by which a generic limit is defined.

\begin{definition}\label{def:family}
Let $S=\{(\fa_{i1},\ldots,\fa_{ie})\}_{i\in I}$ be a collection of $e$-tuples of ideals in $R$, indexed by an infinite set $I$. A \textit{family} $\cF$ \textit{of approximations} of $S$ consists of, with $l_0$ fixed, for each $l\ge l_0$,
\begin{enumerate}
\item[(a)]
a variety $Z_l$,
\item[(b)]
an ideal sheaf $\bar\sfa_j(l)$ on $\bA^d_k\times_{\Spec k}Z_l$ containing $\bar\fm^l\otimes_k\cO_{Z_l}$ for $1\le j\le e$,
\item[(c)]
an infinite subset $I_l$ of $I$ and a map $s_l\colon I_l\to Z_l(k)$, where $Z_l(k)$ is the set of $k$-points on $Z_l$, and
\item[(d)]
a dominant morphism $t_{l+1}\colon Z_{l+1}\to Z_l$,
\end{enumerate}
such that
\begin{enumerate}
\item
$\bar\sfa_j(l)$ gives a flat family of closed subschemes of $\bA_k^d$ parametrised by $Z_l$,
\item\label{itm:family_pullback}
the pull-back of $\bar\sfa_j(l)$ by $\id_{\bA_k^d}\times t_{l+1}$ is $\bar\sfa_j(l+1)+\bar\fm^l\otimes_k\cO_{Z_{l+1}}$,
\item\label{itm:family_aij}
$\fa_{ij}+\fm^l=\bar\sfa_j(l)_{s_l(i)} R$ for $i\in I_l$, where $\bar\sfa_j(l)_z$ is the ideal in $\bar{R}$ given by $\bar\sfa_j(l)$ at $z\in Z_l$,
\item
$s_l(I_l)$ is dense in $Z_l$, and
\item
$I_{l+1}\subset I_l$ and $t_{l+1}\circ s_{l+1}=s_l|_{I_{l+1}}$.
\end{enumerate}
\end{definition}

The construction of $\cF$ using Hilbert schemes is exposed in \cite[Section 4]{dFEM10}. In general, there exist essentially different families of approximations.

For a field extension $K$ of $k$, we set $\bar{R}_K=\bar{R}\otimes_kK=K[x_1,\ldots,x_d]$ with $\bar\fm_K=\bar\fm\bar{R}_K$, and $\bA_K^d=\Spec\bar{R}_K$ with origin $\bar{P}_K$. We also set $R_K=\widehat{R\otimes_kK}=K[[x_1,\ldots,x_d]]$ with $\fm_K=\fm R_K$, and $X_K=\Spec R_K$ with closed point $P_K$.

\begin{definition}\label{def:limit}
Suppose that a family $\cF$ of approximations of $S$ is given as in Definition \ref{def:family}. For this $\cF$, take the union $K=\varinjlim_lK(Z_l)$ of the function fields $K(Z_l)$ of $Z_l$ by the inclusions $t_{l+1}^*\colon K(Z_l)\hookrightarrow K(Z_{l+1})$. Then the \textit{generic limit} of $S$ with respect to $\cF$ is the $e$-tuple $(\sfa_1,\ldots,\sfa_e)$ of ideals in $R_K$ such that $\sfa_j+\fm_K^l=\bar\sfa_j(l)_KR_K$ for all $l\ge l_0$, where $\bar\sfa_j(l)_K$ is the ideal in $\bar{R}_K$ given by $\bar\sfa_j(l)$ at the natural $K$-point $\Spec K\to Z_l$.
\end{definition}

\begin{remark}\label{rmk:limit}
We have $\sfa_j=\varprojlim_l\bar\sfa_j(l)_K$, by $\bar\sfa_j(l)_K=\bar\sfa_j(l+1)_K+\bar\fm_K^l$ from (\ref{itm:family_pullback}) in Definition \ref{def:family}.
\end{remark}

\begin{definition}
Let $\cF=(Z_l,(\bar\sfa_j(l))_j,I_l,s_l,t_{l+1})_{l\ge l_0}$ and $\cF'=(Z'_l,(\bar\sfa'_j(l))_j,I'_l,s'_l,
\linebreak
t'_{l+1})_{l\ge l'_0}$ be families of approximations of $S$. A \textit{morphism} $\cF'\to\cF$ consists of dominant morphisms $f_l\colon Z'_l\to Z_l$ for $l\ge l'_0$, with $l'_0\ge l_0$ imposed, such that
\begin{enumerate}
\item
$t_{l+1}\circ f_{l+1}=f_l\circ t'_{l+1}$,
\item
the pull-back of $\bar\sfa_j(l)$ by $\id_{\bA_k^d}\times f_l$ is $\bar\sfa'_j(l)$, and
\item
$I'_l\subset I_l$ and $f_l\circ s'_l=s_l|_{I'_l}$.
\end{enumerate}
An $\cF'$ is called a \textit{subfamily} of $\cF$ if it is equipped with a morphism $\cF'\to\cF$ as above such that all $f_l$ are open immersions.
\end{definition}

We want to compare minimal log discrepancies over $X$ and $X_K$. The comparison of those for approximated ideals is a consequence of the existence of a family of log resolutions on an open subfamily of triplets and Corollary \ref{cor:extension}.

\begin{lemma}[cf.\ {\cite[Proposition 3.2(ii)]{K14}}]\label{lem:approx_mld}
Notation as above. Let $(\sfa_1,\ldots,\sfa_e)$ be the generic limit of $S$ with respect to $\cF$. Then after replacing $\cF$ with a subfamily,
\begin{align*}
\mld_{P_K}(X_K,\prod_j(\sfa_j+\fm_K^l)^{r_j})=\mld_{\bar{P}}(\bA_k^d,\prod_j\bar\sfa_j(l)_z^{r_j})
\end{align*}
for all $r_1,\ldots,r_e>0$ and all $z\in Z_l$.
\end{lemma}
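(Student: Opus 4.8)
The goal is Lemma~\ref{lem:approx_mld}: after passing to a subfamily of $\cF$, the minimal log discrepancy at $P_K$ of the $\bR$-ideal $\prod_j(\sfa_j+\fm_K^l)^{r_j}$ on $X_K$ equals the minimal log discrepancy at $\bar P$ of $\prod_j\bar\sfa_j(l)_z^{r_j}$ on $\bA_k^d$ for every $z\in Z_l$ and every choice of exponents.

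\bigskip

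\emph{The plan.} The statement couples two kinds of comparison. First, passing from $\bA_k^d$ to its completion $\widehat{\cO_{\bA_k^d,\bar P}}\simeq R$ and then base-changing by a field extension to $R_K$: this is handled by Corollary~\ref{cor:extension}, cases (\ref{itm:extension_compl}) and (\ref{itm:extension_field}), which tells us log discrepancies and hence minimal log discrepancies are preserved under completion at a point and under the extension $k\subset K$. Indeed, for a \emph{fixed} ideal the completion $\widehat{\bar\sfa_j(l)_z}$ in $R$ and its image in $R_K$ have the same mld by Corollary~\ref{cor:extension}, and $\sfa_j+\fm_K^l=\bar\sfa_j(l)_KR_K$ by Definition~\ref{def:limit}, so the real content is to show the mld on $\bA_k^d$ at $\bar P$ is \emph{constant on $Z_l$} after shrinking, i.e. it does not jump as $z$ varies over the base. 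Second, this constancy is the classical statement that the minimal log discrepancy is constructible along a flat family and generically constant on an irreducible base, which follows from the existence of a \emph{simultaneous log resolution} over a dense open subset of $Z_l$.

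\bigskip

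\emph{Key steps, in order.} (1) For $l=l_0$, take a log resolution of the triplet $(\bA_k^d\times_{\Spec k}Z_{l_0},\ \prod_j\bar\sfa_j(l_0)^{r_j}\cdot(\bar\fm\otimes_k\cO_{Z_{l_0}}))$ — note the exponents $r_j$ are positive reals, but a log resolution of the underlying ideals $\prod\bar\sfa_j(l_0)$ together with $\bar\fm\otimes\cO_{Z_{l_0}}$ works for all real exponents simultaneously, since the relevant divisorial data $\ord_E\bar\sfa_j(l_0)$ and the discrepancy divisor do not see the exponents. (2) By generic smoothness / generic flatness over the irreducible variety $Z_{l_0}$, shrink $Z_{l_0}$ to a dense open $Z_{l_0}'$ so that this resolution restricts fibrewise to a log resolution of each $(\bA_k^d,\prod_j\bar\sfa_j(l_0)_z^{r_j}\cdot\bar\fm)$ with the combinatorial data (the exceptional divisors, their discrepancies, the orders $\ord_E\bar\sfa_j(l_0)_z$, and the incidence with the fibre over $\bar P$) independent of $z\in Z_{l_0}'(k)$. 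This is exactly the "family of log resolutions" referred to just before the lemma, and is the technique of \cite[Section 4]{dFEM10}, \cite[Proposition 3.2]{K14}. (3) Shrink the higher $Z_l$ compatibly via the dominant transition maps $t_{l+1}$ — since $t_{l+1}$ is dominant, $t_{l+1}^{-1}(Z_{l_0}')$ is dense, and using the relation (\ref{itm:family_pullback}) in Definition~\ref{def:family} the same resolution pulls back to a family resolution over each $Z_l'$; this produces a subfamily $\cF'$ of $\cF$. (4) On this subfamily, the minimal log discrepancy $\mld_{\bar P}(\bA_k^d,\prod_j\bar\sfa_j(l)_z^{r_j})$ is computed by the finitely many divisors appearing in the fixed resolution (together with $\bar\fm$ absorbing divisors of large order, exactly as in the computation of mld from a log resolution), hence is a fixed expression in the $r_j$ and in the $z$-independent combinatorial data, so it is constant on $Z_l'(k)$ and in particular equals its value at the generic point. (5) Identify that generic value with $\mld_{P_K}(X_K,\prod_j(\sfa_j+\fm_K^l)^{r_j})$: the generic fibre of the family over $Z_l'$, completed at the section through $\bar P$, is $\Spec R_K$ with the ideal $\bar\sfa_j(l)_KR_K=\sfa_j+\fm_K^l$, and the family log resolution specialises to a log resolution there; apply Corollary~\ref{cor:extension}, case (\ref{itm:extension_compl}) for the completion and case (\ref{itm:extension_field}) for the field extension $k(Z_l')\subset K$, to conclude equality of mld's.

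\bigskip

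\emph{Main obstacle.} The only delicate point is step (2)–(3): ensuring that a \emph{single} log resolution of the total space restricts to a genuine fibrewise log resolution with \emph{constant numerical invariants} over a dense open of every $Z_l$ \emph{simultaneously and compatibly} under the maps $t_{l+1}$, and that the $\fm$-primary factor $\bar\fm^l\otimes\cO_{Z_l}$ (which grows with $l$) is handled uniformly so that divisors of order $\ge l$ never affect the mld. Once the family resolution is in place, everything else is a formal application of Corollary~\ref{cor:extension} and the standard formula for mld in terms of a log resolution; the crux is purely the constructibility/genericity argument, which is precisely the content imported from \cite[Proposition 3.2(ii)]{K14}.
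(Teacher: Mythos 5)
Your proposal follows essentially the same route the paper intends: the lemma is stated without a written proof, justified only by the remark that it ``is a consequence of the existence of a family of log resolutions on an open subfamily of triplets and Corollary \ref{cor:extension}'' (citing \cite[Proposition 3.2(ii)]{K14}), and your steps (1), (2), (4), (5) are exactly that argument --- spread out a log resolution over a dense open of the base so the discrepancies, the orders $\ord_E\bar\sfa_j(l)_z$ and the incidence data are constant in $z$ and independent of the exponents $r_j$, then identify the value at the generic point with $\mld_{P_K}$ via Corollary \ref{cor:extension}(\ref{itm:extension_compl}) and (\ref{itm:extension_field}).

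One point in step (3) is stated incorrectly, though the fix is routine and does not change the approach: the level-$l_0$ resolution does \emph{not} pull back to a family log resolution of $\bar\sfa_j(l)$ for $l>l_0$. By (\ref{itm:family_pullback}) of Definition \ref{def:family}, the pull-back of $\bar\sfa_j(l_0)$ to $\bA_k^d\times_{\Spec k}Z_l$ is only the truncation $\bar\sfa_j(l)+\bar\fm^{l_0}\otimes_k\cO_{Z_l}$, which is strictly larger than $\bar\sfa_j(l)$ in general, so resolving it says nothing about $\mld_{\bar P}(\bA_k^d,\prod_j\bar\sfa_j(l)_z^{r_j})$. What is needed (and what the compatibility conditions in the definition of a subfamily are designed for) is a separate family log resolution of $\prod_j\bar\sfa_j(l)\cdot(\bar\fm\otimes_k\cO_{Z_l})$ over a dense open $U_l\subset Z_l$ for \emph{each} $l$, after which one sets $Z'_l=U_l\cap t_l^{-1}(Z'_{l-1})$ inductively; each $Z'_l$ is dense open because the $t_l$ are dominant, and this yields the required subfamily. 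With that correction the argument is complete.
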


We utilise a projective morphism which is descended to $\bA_K^d$.

\begin{definition}
A projective morphism $f_K\colon Y_K\to X_K$ is said to be \textit{descendible} if there exists a projective morphism $\bar{f}_K\colon\bar{Y}_K\to\bA_K^d$ whose base change to $X_K$ is $f_K$.
\end{definition}

\begin{proposition}\label{prp:descendible}
Let $f_K\colon Y_K\to X_K$ be a projective morphism of $R_K$-varieties which is isomorphic outside $P_K$. Then $f_K$ is descendible.
\end{proposition}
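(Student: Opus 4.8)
The plan is to realise $f_K$ as the blow-up of $X_K=\Spec R_K$ along an $\fm_K$-primary ideal $\mathfrak J$, and then to descend $\mathfrak J$ to $\bA_K^d=\Spec\bar R_K$ by truncating it modulo a large power of the maximal ideal, using the identification $R_K/\fm_K^N\cong\bar R_K/\bar\fm_K^N$. We may assume $d\ge2$: for $d=0$ one has $\bA_K^0=X_K$ and nothing is to be proved, and for $d=1$ the ring $R_K$ is a discrete valuation ring, $f_K$ is quasi-finite by a dimension count hence finite, hence an isomorphism onto the normal $X_K$, so $f_K$ is trivially descendible. Thus $R_K$ is a regular local --- in particular normal and factorial --- domain with $\dim R_K=d\ge2$, and we write $U=X_K\setminus\{P_K\}$.

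To produce $\mathfrak J$: since $Y_K$ is integral and $f_K$ is an isomorphism over $U$, the morphism $f_K$ is projective and birational, so by the classical fact that a projective birational morphism to a Noetherian scheme is the blow-up of a coherent ideal sheaf, we have $Y_K\cong\mathrm{Bl}_{\mathfrak I}X_K$ for some coherent ideal $\mathfrak I$. Restricting to $U$, the morphism $\mathrm{Bl}_{\mathfrak I|_U}U\to U$ is an isomorphism, hence $\mathfrak I|_U$ is an invertible ideal sheaf, say $\mathfrak I|_U=\cO_U(-D)$ for an effective Cartier divisor $D$ on $U$. Let $\bar D$ be the closure of $D$ in $X_K$; as $R_K$ is factorial, $\cO_{X_K}(-\bar D)=(g)$ is principal, and it restricts to $\mathfrak I|_U$ on $U$. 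Every height-one prime $\mathfrak p$ of $R_K$ lies in $U$ (since $\fm_K$ has height $d\ge2$) and satisfies $\mathfrak I R_{\mathfrak p}=(g)R_{\mathfrak p}$, so by normality $\mathfrak I$ is contained in the intersection of the ideals $(g)R_{\mathfrak p}$ over all height-one primes $\mathfrak p$, which equals $(g)$. Hence $\mathfrak J:=g^{-1}\mathfrak I$ is a genuine ideal of $R_K$ with $\mathfrak J|_U=\cO_U$, i.e.\ $\mathfrak J$ is $\fm_K$-primary or the unit ideal, and since $(g)$ is invertible, $\mathrm{Bl}_{\mathfrak I}X_K=\mathrm{Bl}_{\mathfrak J}X_K$. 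Thus $f_K$ is the blow-up of $X_K$ along $\mathfrak J$.

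To descend $\mathfrak J$: fix $N$ with $\fm_K^N\subset\mathfrak J$, so that $\mathfrak J/\fm_K^N$ is an ideal of $R_K/\fm_K^N=\bar R_K/\bar\fm_K^N$, and let $\bar{\mathfrak J}\subset\bar R_K$ be its preimage, so that $\bar\fm_K^N\subset\bar{\mathfrak J}$. Since $\bar R_K\to R_K$ is flat and $\bar R_K/\bar{\mathfrak J}$ has finite length (hence is unaffected by tensoring with $R_K$), the ideal $\bar{\mathfrak J}R_K$ is the kernel of $R_K\to(\bar R_K/\bar{\mathfrak J})\otimes_{\bar R_K}R_K=\bar R_K/\bar{\mathfrak J}=R_K/\mathfrak J$, that is, $\bar{\mathfrak J}R_K=\mathfrak J$. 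Now set $\bar f_K\colon\bar Y_K:=\mathrm{Bl}_{\bar{\mathfrak J}}\bA_K^d\to\bA_K^d$, a projective morphism. The morphism $X_K\to\bA_K^d$ is flat, and blow-up commutes with flat base change, so $\bar Y_K\times_{\bA_K^d}X_K=\mathrm{Bl}_{\bar{\mathfrak J}\cO_{X_K}}X_K=\mathrm{Bl}_{\mathfrak J}X_K\cong Y_K$ over $X_K$; therefore $f_K$ is descendible.

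The main obstacle is the first step: invoking that a projective birational morphism is a blow-up, and then cutting the blow-up ideal down to an $\fm_K$-primary one. This is the only place where the regularity and factoriality of $R_K$ and the hypothesis that $f_K$ is an isomorphism away from $P_K$ are used in an essential way; after that, the argument is routine bookkeeping with the truncation $R_K/\fm_K^N\cong\bar R_K/\bar\fm_K^N$ and with flat base change for blow-ups.
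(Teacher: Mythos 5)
Your proof is correct and follows essentially the same route as the paper's: realise $f_K$ as the blow-up of a coherent ideal (Liu, Theorem 8.1.24), reduce to an $\fm_K$-primary ideal by factoring out the divisorial part, descend that ideal to $\bar{R}_K$ via the identification $R_K/\fm_K^N\simeq\bar{R}_K/\bar\fm_K^N$, and conclude by flat base change for blow-ups. You merely spell out in more detail the step the paper compresses into ``we may assume $\codim_{X_K}\Cosupp\fn_K\ge2$'', and handle $d\le1$ separately.
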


\begin{proof}
Assuming $d\ge1$, $f_K$ is the blow-up along an ideal $\fn_K$ in $R_K$ \cite[Theorem 8.1.24]{Li02}. We may assume $\codim_{X_K}\Cosupp\fn_K\ge2$, then $\Cosupp\fn_K\subset P_K$, that is, $\fn_K$ is an $\fm_K$-primary ideal. Thus, $\fn_K$ is the pull-back of the ideal $\bar\fn_K=\fn_K\cap\bar{R}_K$ in $\bar{R}_K$. Since blowing-up commutes with flat base change \cite[Proposition 8.1.12(c)]{Li02}, the blow-up of $\bA_K^d$ along $\bar\fn_K$ is base-changed to $f_K$.
\end{proof}

Let $f_K\colon Y_K\to X_K$ be a descendible projective morphism, descended to $\bar{f}_K\colon\bar{Y}_K\to\bA_K^d$. This $\bar{f}_K$ is defined over $k(Z_{l'_0})$ for some $l'_0\ge l_0$. For $l\ge l'_0$, one can construct inductively a projective morphism $\bar{f}'_l\colon\bar{Y}'_l\to\bA_k^d\times_{\Spec k}Z'_l$ with a non-singular open subvariety $Z'_l$ of $Z_l$ such that (i) $\bar{Y}'_l$ is flat over $Z'_l$, (ii) $Z'_{l+1}\subset t_{l+1}^{-1}(Z'_l)$, and (iii) $\bar{f}'_{l+1}$ and $\bar{f}_K$ are the base changes of $\bar{f}'_l$, by generic flatness \cite[Corollaire IV.11.1.5]{EGA}. These $Z'_l$ with $I'_l=s_l^{-1}(Z'_l(k))$ form a subfamily $\cF'$ of $\cF$. Replacing $\cF$ with $\cF'$, we obtain a commutative diagram
\begin{align}\label{eqn:cartesian}
\begin{aligned}
\xymatrix@!0@-1.5pt{
Y_K\ar[rrrr]\ar[rrd]\ar[ddd]_{f_K}&&&&Y_l\ar@{-}[d]\ar[rrd]&&\\
&&\bar{Y}_K\ar[rrrr]\ar[ddd]_{\bar{f}_K}&&\ar[dd]_(.25){f_l}&&\bar{Y}_l\ar[ddd]_{\bar{f}_l}\\
&&&&&&\\
X_K\ar@{-}[rr]\ar[rrd]&&\ar[rr]&&X\times_{\Spec k}Z_l\ar[rrd]&&\\
&&\bA_K^d\ar[rrrr]&&&&\bA_k^d\times_{\Spec k}Z_l
}
\end{aligned}
\end{align}
for $l\ge l_0$ (the $l_0$ is replaced) such that (i) $Z_l$ is non-singular, (ii) $\bar{f}_l$ is projective, (iii) $\bar{Y}_l$ is flat over $Z_l$, and (iv) $\bar{f}_{l+1}$, $\bar{f}_K$, $f_l$ and $f_K$ are the base changes of $\bar{f}_l$. In general, $X_K\to X\times_{\Spec k}Z_l$ is not the base change of $\bA_K^d\to\bA_k^d\times_{\Spec k}Z_l$.

Whenever an algebraic object over $X_K$ descendible to $\bA_K^d$ is specified, by taking a subfamily, one can construct (\ref{eqn:cartesian}) so that it comes from a flat family over $Z_l$. For example, suppose that $E_K\in\cD_{X_K}$ with centre $P_K$ is given. It is realised as a divisor on $Y_K$ equipped with a log resolution $f_K\colon Y_K\to X_K$ of $(X_K,\fm_K)$, which is isomorphic outside $P_K$. This $f_K$ is descended to a log resolution $\bar{f}_K$ by Proposition \textup{\ref{prp:descendible}}, and $\bar{f}_K$ is extended to a family $\bar{f}_l$ of log resolutions in (\ref{eqn:cartesian}) by generic smoothness. There exists a prime divisor $\bar{E}_l$ on $\bar{Y}_l$ which is base-changed to $E_K$. By this observation, Lemma \ref{lem:approx_mld} is refined as follows.

\begin{lemma}[cf.\ {\cite[Proposition 3.2(iii)]{K14}}]\label{lem:mld}
Notation as above. Fix $r_1,\ldots,r_e>0$ and $E_K\in\cD_{X_K}$ computing $\mld_{P_K}(X_K,\prod_j\sfa_j^{r_j})$. Then after replacing $\cF$ with a subfamily, there exists a divisor $\bar{E}_l$ over $\bA_k^d\times_{\Spec k}Z_l$ for any $l$, base-changed to $E_K$, such that
\begin{gather*}
\mld_{P_K}(X_K,\prod_j\sfa_j^{r_j})=\mld_{\bar{P}}(\bA_k^d,\prod_j\bar\sfa_j(l)_z^{r_j})=a_{(\bar{E}_l)_z}(\bA_k^d,\prod_j\bar\sfa_j(l)_z^{r_j}),\\
\ord_{E_K}\sfa_j=\ord_{E_K}(\sfa_j+\fm_K^l)=\ord_{(\bar{E}_l)_z}\bar\sfa_j(l)_z<l,
\end{gather*}
for all $z\in Z_l$.
\end{lemma}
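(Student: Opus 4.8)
The statement to prove is Lemma \ref{lem:mld}. It refines Lemma \ref{lem:approx_mld} by tracking a single divisor $E_K$ computing $\mld_{P_K}(X_K,\prod_j\sfa_j^{r_j})$ through the whole family of approximations.

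\begin{proof}[Proof plan]
The plan is to combine Lemma \ref{lem:approx_mld} with the descendibility machinery set up just before the statement. First I would fix $E_K\in\cD_{X_K}$ computing $\mld_{P_K}(X_K,\prod_j\sfa_j^{r_j})$ and realise it as a prime divisor on a non-singular $R_K$-variety $Y_K$ equipped with a log resolution $f_K\colon Y_K\to X_K$ of $(X_K,\fm_K\prod_j\sfa_j)$ which is isomorphic outside $P_K$; this uses that $E_K$ has centre $P_K$, so after blowing up along $\fm_K$ one may assume $f_K$ is an isomorphism away from $P_K$. By Proposition \ref{prp:descendible}, $f_K$ is descendible, descended to $\bar{f}_K\colon\bar{Y}_K\to\bA_K^d$, and by generic smoothness $\bar{f}_K$ extends to a family $\bar{f}_l\colon\bar{Y}_l\to\bA_k^d\times_{\Spec k}Z_l$ of log resolutions fitting into the diagram (\ref{eqn:cartesian}), after passing to a subfamily. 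The divisor $E_K$, being a $k(Z_{l'_0})$-divisor on $\bar{Y}_K$ for some $l'_0$, spreads out to a prime divisor $\bar{E}_l$ on $\bar{Y}_l$ whose base change to $Y_K$ recovers $E_K$; denote its fibre over $z\in Z_l$ by $(\bar{E}_l)_z$.

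Next I would establish the numerical identities. Applying Lemma \ref{lem:approx_mld} (after shrinking $\cF$) gives $\mld_{P_K}(X_K,\prod_j(\sfa_j+\fm_K^l)^{r_j})=\mld_{\bar{P}}(\bA_k^d,\prod_j\bar\sfa_j(l)_z^{r_j})$ for all $z\in Z_l$. Since $E_K$ computes $\mld_{P_K}(X_K,\prod_j\sfa_j^{r_j})$ and its log discrepancy and the orders $\ord_{E_K}\sfa_j$ are finite, I would choose $l$ large enough (for $l\ge l_0$) that $\ord_{E_K}\sfa_j<l$ for every $j$; this is possible because there are finitely many $j$ and each $\ord_{E_K}\sfa_j$ is a fixed finite number. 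For such $l$ one has $\ord_{E_K}\sfa_j=\ord_{E_K}(\sfa_j+\fm_K^l)$, since the extra ideal $\fm_K^l$ has order $l$ along $E_K$ (the centre of $E_K$ being $P_K$, so $\ord_{E_K}\fm_K\ge1$), and hence $a_{E_K}(X_K,\prod_j(\sfa_j+\fm_K^l)^{r_j})=a_{E_K}(X_K,\prod_j\sfa_j^{r_j})=\mld_{P_K}(X_K,\prod_j\sfa_j^{r_j})$. Combined with Lemma \ref{lem:approx_mld} this already pins down the left two terms in the first displayed chain. Because $\bar{f}_l$ is a family of log resolutions and the data descends, Corollary \ref{cor:extension} (applied to the regular base changes $\bA_k^d\times Z_l\to\bA_k^d$ at the generic point and at closed points $z$) yields $a_{(\bar{E}_l)_z}(\bA_k^d,\prod_j\bar\sfa_j(l)_z^{r_j})=a_{\bar{E}_K}(\bA_K^d,\prod_j\bar\sfa_j(l)_K^{r_j})$ and $\ord_{(\bar{E}_l)_z}\bar\sfa_j(l)_z=\ord_{\bar{E}_K}\bar\sfa_j(l)_K$, independently of $z\in Z_l$; and these in turn equal $a_{E_K}(X_K,\prod_j(\sfa_j+\fm_K^l)^{r_j})$ and $\ord_{E_K}(\sfa_j+\fm_K^l)$ by the flat base change $X_K\to\bA_K^d$ (blow-up along an $\fm_K$-primary ideal) together with Corollary \ref{cor:extension} again. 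Since the log discrepancy of $E_K$ realises the mld, the value $a_{(\bar{E}_l)_z}(\bA_k^d,\prod_j\bar\sfa_j(l)_z^{r_j})$ equals $\mld_{\bar{P}}(\bA_k^d,\prod_j\bar\sfa_j(l)_z^{r_j})$, closing the first chain; chasing the orders through the same base changes gives the second chain $\ord_{E_K}\sfa_j=\ord_{E_K}(\sfa_j+\fm_K^l)=\ord_{(\bar{E}_l)_z}\bar\sfa_j(l)_z<l$.

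The main obstacle is bookkeeping rather than any deep input: one must check that all the relevant base changes appearing in (\ref{eqn:cartesian}) — the spreading-out $\bA_k^d\times Z_l\to\bA_k^d$, the fibre inclusions over $k$-points $z$, and the flat morphism $X_K\to\bA_K^d$ — are of the type covered by Corollary \ref{cor:extension}, so that $K_{\bar{Y}_l/(\bA_k^d\times Z_l)}$ pulls back correctly along all of them and the orders $\ord_{(\bar{E}_l)_z}\bar\sfa_j(l)_z$ really are constant in $z$. A subtlety already flagged in the text is that $X_K\to X\times_{\Spec k}Z_l$ is \emph{not} the base change of $\bA_K^d\to\bA_k^d\times_{\Spec k}Z_l$ in general, so one cannot directly transport data along the top face of the cube; instead one routes everything through the descended objects on $\bA_k^d$-side and invokes flat base change for the two vertical maps separately, shrinking $Z_l$ (i.e.\ passing to a subfamily) as needed so that $\bar{E}_l$, the strata of the log resolution, and the relevant orders all spread out flatly. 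Once this is organised, the conclusion follows by comparing values and is essentially automatic.
\end{proof}
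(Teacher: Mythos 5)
Your proposal is correct and follows essentially the same route the paper takes (in the paragraph preceding the lemma and the reference to [K14, Proposition 3.2(iii)]): realise $E_K$ on a log resolution isomorphic outside $P_K$, descend it via Proposition \ref{prp:descendible}, spread it out to a family $\bar{E}_l$ over $Z_l$ by generic smoothness after passing to a subfamily, enlarge $l_0$ so that $\ord_{E_K}\sfa_j<l$, and close the chains of equalities by combining Lemma \ref{lem:approx_mld} with Corollary \ref{cor:extension}. The only cosmetic imprecision is attributing the constancy in $z$ over closed points to Corollary \ref{cor:extension} (which covers field extensions, completions and base-structure changes, not specialisation), but you correctly supply the actual mechanism, namely flatness and log smoothness of the family of log resolutions over $Z_l$ after shrinking.
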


We apply the ideal-adic semi-continuity of log canonicity by Koll\'ar, and de Fernex, Ein and Musta\c{t}\u{a}.

\begin{theorem}[{\cite{Kl08}, \cite{dFEM10}, \cite[Proposition 2.20]{dFEM11}}]\label{thm:adic0}
Let $Q\in Y$ be a germ of an lc variety and set $\hat{Y}=\Spec\widehat{\cO_{Y,Q}}$ with closed point $\hat{Q}$. Let $\fa=\prod_j\fa_j^{r_j}$ be an $\bR$-ideal on $\hat{Y}$. Suppose $\mld_{\hat{Q}}(\hat{Y},\fa)=0$ and it is computed by $\hat{E}\in\cD_{\hat{Y}}$. If an $\bR$-ideal $\fb=\prod_j\fb_j^{r_j}$ on $\hat{Y}$ satisfies $\fa_j+\fp_j=\fb_j+\fp_j$ for all $j$, where $\fp_j=\{u\in\cO_{\hat{Y}}\mid\ord_{\hat{E}}u>\ord_{\hat{E}}\fa_j\}$, then $\mld_{\hat{Q}}(\hat{Y},\fb)=0$.
\end{theorem}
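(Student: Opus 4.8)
The statement splits into the easy inequality $\mld_{\hat Q}(\hat Y,\fb)\le0$ and the substantive one, namely that $(\hat Y,\fb)$ is lc about $\hat Q$. The plan is to settle the former by a direct valuation computation and to reduce the latter to the already known variety case.

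For the easy inequality: since $\fp_j=\{u\mid\ord_{\hat E}u>\ord_{\hat E}\fa_j\}$ is exactly the set of elements whose order along $\hat E$ strictly exceeds $\ord_{\hat E}\fa_j$, the hypotheses $\fb_j\subset\fa_j+\fp_j$ and $\fa_j\subset\fb_j+\fp_j$ together force $\ord_{\hat E}\fb_j=\ord_{\hat E}\fa_j$: the first inclusion gives $\ord_{\hat E}\fb_j\ge\ord_{\hat E}\fa_j$, and for the reverse one picks $a\in\fa_j$ realising $\ord_{\hat E}a=\ord_{\hat E}\fa_j$ and writes $a=b+p$ with $b\in\fb_j$, $p\in\fp_j$, so that $\ord_{\hat E}p>\ord_{\hat E}a$ forces $\ord_{\hat E}b=\ord_{\hat E}a$ by the ultrametric inequality. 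Hence $a_{\hat E}(\hat Y,\fb)=a_{\hat E}(\hat Y,\fa)=0$ while $c_{\hat Y}(\hat E)=\hat Q$, so $\mld_{\hat Q}(\hat Y,\fb)\le0$. It remains to prove that $(\hat Y,\fb)$ is lc about $\hat Q$, equivalently $\mld_{\hat Q}(\hat Y,\fb)\ge0$.

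Next I would reduce this to the case that $Y$ is a variety, using three inputs: log canonicity, and more generally the log canonical threshold, is unchanged under completion; the minimal log discrepancy is unchanged under completion for $\bR$-ideals extended from $\cO_{Y,Q}$ by Corollary \ref{cor:extension}; and, by the de Fernex--Musta\c{t}\u{a} approximation of ideals in $\widehat{\cO_{Y,Q}}$ by ideals in $\cO_{Y,Q}$ \cite{dFM09}, the log canonical threshold of an $\bR$-ideal in $\widehat{\cO_{Y,Q}}$ is the limit of those of its finite-level truncations $\fa_j+\fm_{\hat Q}^M$, which are $\bR$-ideals of $\cO_{Y,Q}$. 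Choosing $M$ large enough that $\fm_{\hat Q}^M\subset\fp_j$ for all $j$, the truncations $\fa'_j$, $\fb'_j$ of $\fa_j$, $\fb_j$ on $Y$ still satisfy $\fa'_j+\fp_j=\fb'_j+\fp_j$ and $\ord_{\hat E}\fa'_j=\ord_{\hat E}\fa_j$; the triplet $(Y,\prod_j(\fa'_j)^{r_j})$ is lc about $Q$ with $\mld_Q=0$, computed by a divisor over $Y$ inducing $\hat E$; and $(\hat Y,\fb)$ is lc about $\hat Q$ as soon as $(Y,\prod_j(\fb'_j)^{r_j})$ is lc about $Q$ for all such $M$. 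So it suffices to treat $\fa'$, $\fb'$ on the variety $Y$, which is the ideal-adic semi-continuity of log canonicity on varieties due to Koll\'ar \cite{Kl08} via Hilbert schemes and to de Fernex, Ein and Musta\c{t}\u{a} \cite[Theorem 1.4]{dFEM10} (and \cite[Proposition 2.20]{dFEM11} when $Y$ is lc but not smooth).

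On a variety the argument proceeds by fixing a log resolution $f\colon W\to Y$ of $(Y,\prod_j\fa'_j\cdot\prod_j\fb'_j\cdot\fm_Q)$ on which the computing divisor is realised as a prime divisor, so that $\prod_j\fa'_j$ becomes invertible and the congruence modulo $\fp_j$ reads off as agreement of $\fa'_j$ and $\fb'_j$ to high order along that divisor; one then invokes the jet-scheme (arc-space) descriptions of the log canonical threshold and the minimal log discrepancy, which show that these invariants are determined by finitely many truncations of the ideal, and — given that the invariant is computed by the fixed divisor — already by the truncation encoded in $\fp_j$, so that the two triplets are lc about $Q$ simultaneously. I expect this variety-level semi-continuity to be the main obstacle: for a divisor $G$ over $Y$ with $c_Y(G)\ni Q$ one cannot bound $\ord_G\fb'_j$ in terms of $\ord_G\fa'_j$ and the order along the computing divisor by any naive valuation inequality, and it is exactly this gap that the semi-continuity machinery bridges rather than a direct estimate.
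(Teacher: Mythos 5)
This theorem is quoted in the paper from \cite{Kl08}, \cite{dFEM10} and \cite[Proposition 2.20]{dFEM11}; the paper supplies no proof beyond the remark that it "is reduced to the corresponding statement \cite[Theorem 1.4]{dFEM10} on a variety by the property that the log canonical threshold for an ideal in $\widehat{\cO_{Y,Q}}$ is approximated by those for ideals in $\cO_{Y,Q}$." Your proposal follows exactly this route (plus the routine verification that $a_{\hat E}(\hat Y,\fb)=0$, so $\mld\le0$) and correctly defers the substantive variety-level semi-continuity to the cited sources, so it is essentially the same approach as the paper's.
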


\begin{corollary}\label{cor:adic0}
In Lemma \textup{\ref{lem:mld}}, if $\mld_{P_K}(X_K,\prod_j\sfa_j^{r_j})=0$, then $\mld_{P}(X,\prod_j\fa_{ij}^{r_j})=0$ for any $i\in I_l$ on a subfamily. In particular, if $(X_K,\prod_j\sfa_j^{r_j})$ is lc, then so is $(X,\prod_j\fa_{ij}^{r_j})$.
\end{corollary}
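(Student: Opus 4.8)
The plan is to deduce the corollary from the ideal-adic semicontinuity of log canonicity, Theorem~\ref{thm:adic0}, exploiting that $X=\Spec R=\Spec\widehat{\cO_{\bA_k^d,\bar P}}$ and that Lemma~\ref{lem:mld} supplies a divisor computing the minimal log discrepancy over $\bA_k^d$ whose orders can be controlled. Assume first $\mld_{P_K}(X_K,\prod_j\sfa_j^{r_j})=0$. After replacing $\cF$ with a subfamily, Lemma~\ref{lem:mld} gives a divisor $\bar E_l$ over $\bA_k^d\times_{\Spec k}Z_l$, base-changed to $E_K$, with $a_{(\bar E_l)_z}(\bA_k^d,\prod_j\bar\sfa_j(l)_z^{r_j})=\mld_{\bar P}(\bA_k^d,\prod_j\bar\sfa_j(l)_z^{r_j})=0$ and $\ord_{(\bar E_l)_z}\bar\sfa_j(l)_z<l$ for every $z\in Z_l$ and every $j$; in particular $(\bar E_l)_z$ has centre $\bar P$ and computes this minimal log discrepancy.

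Fix $i\in I_l$ and put $z=s_l(i)$. Since $R=\widehat{\cO_{\bA_k^d,\bar P}}$ and $\bar P\times_{\bA_k^d}X=\{P\}$, Corollary~\ref{cor:extension}(\ref{itm:extension_compl}) yields a component $E$ of $(\bar E_l)_z\times_{\bA_k^d}X$, a divisor over $X$ with centre $P$, which computes $\mld_P(X,\prod_j(\bar\sfa_j(l)_zR)^{r_j})=0$ and satisfies $\ord_E(\bar\sfa_j(l)_zR)=\ord_{(\bar E_l)_z}\bar\sfa_j(l)_z<l$ and $\ord_E\fm=\ord_{(\bar E_l)_z}\bar\fm\ge1$. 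Now I would apply Theorem~\ref{thm:adic0} with $Q=\bar P\in\bA_k^d$ (lc, being non-singular), $\hat Y=X$, $\fa=\prod_j(\bar\sfa_j(l)_zR)^{r_j}$ computed by $E$, and $\fb=\prod_j\fa_{ij}^{r_j}$. Its hypothesis requires $\fa_{ij}+\fp_j=\bar\sfa_j(l)_zR+\fp_j$, where $\fp_j=\{u\in R\mid\ord_E u>\ord_E(\bar\sfa_j(l)_zR)\}$. For $u\in\fm^l$ one has $\ord_E u\ge l\,\ord_E\fm\ge l>\ord_E(\bar\sfa_j(l)_zR)$, so $\fm^l\subset\fp_j$, and then $\fa_{ij}+\fm^l=\bar\sfa_j(l)_zR$ (Definition~\ref{def:family}(\ref{itm:family_aij})) gives the desired equality. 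Theorem~\ref{thm:adic0} then yields $\mld_P(X,\prod_j\fa_{ij}^{r_j})=0$, which is the first assertion.

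For the last assertion, if $(X_K,\prod_j\sfa_j^{r_j})$ is lc with minimal log discrepancy $0$ at $P_K$, it follows at once from the first assertion. If $m:=\mld_{P_K}(X_K,\prod_j\sfa_j^{r_j})>0$, I would choose $s>0$ with $\mld_{P_K}(X_K,\prod_j\sfa_j^{r_j}\cdot\fm_K^s)=0$, which exists since $m\le d$ and this minimal log discrepancy decreases continuously to $-\infty$ as $s\to\infty$; then enlarge the collection to $S'=\{(\fa_{i1},\dots,\fa_{ie},\fm)\}_i$ with exponents $(r_1,\dots,r_e,s)$, adjoining $\bar\sfa_{e+1}(l)=\bar\fm\otimes_k\cO_{Z_l}$ to $\cF$, so that the generic limit of $S'$ is $(\sfa_1,\dots,\sfa_e,\fm_K)$. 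Applying the first assertion to $S'$, after shrinking we get $\mld_P(X,\prod_j\fa_{ij}^{r_j}\cdot\fm^s)=0$, hence $(X,\prod_j\fa_{ij}^{r_j}\fm^s)$ is lc, and therefore so is $(X,\prod_j\fa_{ij}^{r_j})$.

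I expect the only genuine content to sit in the second paragraph: that the divisor computing the minimal log discrepancy over $\bA_k^d$ stays a computing divisor over the completion $X$ with unchanged orders---which is exactly Corollary~\ref{cor:extension}(\ref{itm:extension_compl})---and that the elementary bounds $\ord_E\fm\ge1$ and $\ord_E(\bar\sfa_j(l)_zR)<l$ place $\fm^l$ inside the ideal $\fp_j$ featuring in Theorem~\ref{thm:adic0}. The remaining points, namely the existence of $s$ and the observation that the constant ideal $\fm$ has generic limit $\fm_K$, are routine.
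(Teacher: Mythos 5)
Your proposal is correct and is essentially the intended derivation: the paper states this corollary without a separate proof, as the direct combination of Lemma \ref{lem:mld} (a computing divisor with $\ord_{(\bar E_l)_z}\bar\sfa_j(l)_z<l$), Corollary \ref{cor:extension}(\ref{itm:extension_compl}) to pass from $\bA_k^d$ to its completion $X$, and Theorem \ref{thm:adic0} via the inclusion $\fm^l\subset\fp_j$, exactly as you argue. The reduction of the lc statement to the $\mld=0$ case by adjoining $\fm$ with the appropriate exponent $s$ (the log canonical threshold of $\fm_K$ with respect to the lc pair) is the standard trick and is also what is intended here.
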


{\small
\begin{acknowledgements}
I should like to thank Professors O.~Fujino, Y.~Gongyo, J.~Koll\'ar, M.~Musta\c{t}\u{a} and N.~Nakayama for discussions. The research was partially supported by JSPS Grant-in-Aid for Young Scientists (A) 24684003.
\end{acknowledgements}
}

\end{document}